\begin{document}

\newtheorem{theorem}{Theorem}[section]
\newtheorem{conjecture}[theorem]{Conjecture}
\newtheorem{corollary}[theorem]{Corollary}
\newtheorem{lemma}[theorem]{Lemma}
\newtheorem{claim}[theorem]{Claim}
\newtheorem{proposition}[theorem]{Proposition}
\newtheorem{construction}[theorem]{Construction}
\newtheorem{definition}[theorem]{Definition}
\newtheorem{question}[theorem]{Question}
\newtheorem{problem}[theorem]{Problem}
\newtheorem{remark}[theorem]{Remark}
\newtheorem{observation}[theorem]{Observation}

\newcommand{\ex}{{\mathrm{ex}}}

\newcommand{\EX}{{\mathrm{EX}}}

\newcommand{\AR}{{\mathrm{AR}}}

\def\endproofbox{\hskip 1.3em\hfill\rule{6pt}{6pt}}
\newenvironment{proof}%
{%
\noindent{\it Proof.}
}%
{%
 \quad\hfill\endproofbox\vspace*{2ex}
}
\def\qed{\hskip 1.3em\hfill\rule{6pt}{6pt}}
\def\ce#1{\lceil #1 \rceil}
\def\fl#1{\lfloor #1 \rfloor}
\def\lr{\longrightarrow}
\def\e{\varepsilon}
\def\ex{{\rm\bf ex}}
\def\cA{{\cal A}}
\def\cB{{\cal B}}
\def\cC{{\cal C}}
    \def\cD{{\cal D}}
\def\cF{{\cal F}}
\def\cG{{\cal G}}
\def\cH{{\cal H}}
\def\ck{{\cal K}}
\def\cI{{\cal I}}
\def\cJ{{\cal J}}
\def\cL{{\cal L}}
\def\cM{{\cal M}}
\def\cP{{\cal P}}
\def\cQ{{\cal Q}}
\def\cR{{\cal R}}
\def\cS{{\cal S}}
\def\cE{{\cal E}}
\def\cT{{\cal T}}
\def\ex{{\rm ex}}
\def\pr{{\rm Pr}}
\def\exp{{\rm  exp}}

\def\wt{\widetilde{T}}
\def\bkl{{\cal B}^{(k)}_e}
\def\cmkt{{\cal M}^{(k)}_{t+1}}
\def\cpkl{{\cal P}^{(k)}_e}
\def\cckl{{\cal C}^{(k)}_e}
\def\pkl{\mathbb{P}^{(k)}_e}
\def\ckl{\mathbb{C}^{(k)}_e}

\def\mC{{\cal C}}

\def\imp{\Longrightarrow}
\def\1e{\frac{1}{\e}\log \frac{1}{\e}}
\def\ne{n^{\e}}
\def\rad{ {\rm \, rad}}
\def\equ{\Longleftrightarrow}
\def\pkl{\mathbb{P}^{(k)}_e}

\def\mE{\mathbb{E}}

\def\mP{\mathbb{P}}

\def \e{\varepsilon}

\voffset=-0.5in
	
\setstretch{1.1}
\pagestyle{myheadings}
\markright{{\small \sc  Jiang, Yepremyan:}
  {\it\small Supersaturation of linear even cycles in  linear hypergraphs}}
\newcommand{\brm}[1]{\operatorname{#1}}

\title{\huge\bf  Supersaturation of Even Linear Cycles in Linear Hypergraphs}

\author{
Tao Jiang\thanks{Department of Mathematics, Miami University, Oxford,
OH 45056, USA. E-mail: jiangt@miamioh.edu. Research supported in part 
by National Science Foundation grant DMS-1400249. }
\quad \quad Liana Yepremyan
\thanks{Department of Mathematics, University of Oxford, UK. E-mail:yepremyan@maths.ox.ac.uk. Research supported in part by ERC Consolidator Grant 647678
 \newline\indent
{\it 2010 Mathematics Subject Classifications:}
05C35, 05C65, 05D05.\newline\indent
{\it Key Words}:  Tur\'an number,  supersaturation, linear hypergraph, linear cycles
} }

\date{\today}
\maketitle
\begin{abstract}
A classic result of Erd\H{o}s and, independently, of Bondy and Simonovits \cite{BS} says that the maximum number of edges
in an $n$-vertex graph not containing  $C_{2k}$, the cycle of length $2k$, is $O( n^{1+1/k})$. Simonovits  established
a corresponding supersaturation result for $C_{2k}$'s, showing that there exist positive constants $C,c$ depending only
on $k$ such that every $n$-vertex graph $G$ with $\brm{e}(G)\geq Cn^{1+1/k}$ contains at least $c\left(\frac{\brm{e}(G)}{\brm{v}(G)}\right)^{2k}$
many copies of $C_{2k}$, this  number of copies tightly achieved by the random graph (up to a multiplicative constant).

In this paper, we extend Simonovits' result to a supersaturation result of $r$-uniform linear cycles of even length in $r$-uniform
linear hypergraphs. Our proof is self-contained and includes the $r=2$ case. As an auxiliary tool, we develop a reduction lemma from 
general host graphs to almost-regular host graphs that can be used for other supersaturation problems, and may therefore be of independent interest.
\end{abstract}

\section{Introduction}
 
One of the central problems in extremal graph theory is the Tur\'an problem, where for a fixed graph $H$ and fixed $n$, one wishes to
determine the maximum number of edges  an $n$-vertex graph can have without creating a copy of $H$ as a 
subgraph. This number is called the \emph{Tur\'an number} of $H$ and denoted by $ex(n,H)$. The celebrated Erd\H{o}s-Stone-Simonovits \cite{ES} theorem  says that $ex(n,H)= \left(1-\frac{1}{\chi(H)-1}\right)n^2 +o(n^2)$, where $\chi(H)$ is the chromatic number of the graph $H$. This  solves the Tur\'an problem  asymptotically for all non-bipartite graphs $H$.
However, asymptotic results or exact results  are known only for a handful number of bipartite graphs.  While the Tur\'an problem asks for the threshold on the number of edges on $n$ vertices
that guarantees at least one copy of  $H$, it is natural to ask what is 
the minimum number of copies of $H$ guaranteed in a host graph once its number of edges exceeds $ex(n,H)$.
Such problems are referred to as {\it supersaturation problems}. When $H$ is non-bipartite, 
we know the correct order of magnitude of the answer.
Let $H$ be a graph with $\chi(H)=p\geq 3$ and $\brm{v}(H)$ vertices.
A  simple averaging argument  (see, for example, Lemma 2.1 in \cite{keevash-survey}) can be used to show that 
for any  $\varepsilon>0$ there exist $\delta, n_0>0$ such that 
if $G$ is a graph on $n\geq n_0$ vertices with $\brm{e}(G)\geq (1-\frac{1}{p-1}+\varepsilon)\binom{n}{2}$ then
$G$ contains at least $\delta\binom{n}{\brm{v}(H)}$ copies of $H$. This count is tight up to a multiplicative constant, as  shown by the random graph of the same edge density as $G$. The threshold on the  number of edges on $G$ for which the count is valid is also asymptotically best possible, as shown by the Tur\'an graph $T_{n,p-1}$, which is defined as the balanced blowup of the complete graph on $(p-1)$ vertices. For the supersaturation problem for bipartite graphs,  Erd\H{o}s and Simonovits \cite{ES-cube-supersat} made the following  conjecture in the 1980s.
\begin{conjecture} \label{bipartite-supersaturation} {\bf \cite{ES-cube-supersat}}
Let $H$ be a bipartite graph with $v$ vertices and $e$ edges.
Suppose that  $ex(n,H)=O(n^{2-\alpha})$ for some real $0<\alpha<1$.
Then there exist $\alpha'\leq \alpha$ and  constants $C,c>0$ such that if $G$ is an $n$-vertex graph
with 
\begin{equation}\label{eq:condeg}
\brm{e}(G)\geq Cn^{2-\alpha'}
\end{equation} edges then $G$ contains at least $c\frac{(\brm{e}(G))^e}{n^{2e-v}}$ copies of $H$.
\end{conjecture}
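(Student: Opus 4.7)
The plan is to adapt, in the bipartite setting, the classical averaging argument that settles the supersaturation problem for graphs of chromatic number at least $3$, incorporating two essential upgrades: a regularization preprocessing step, and a random-sampling step that converts the single-copy Tur\'an threshold into a copy-count proportional to the random-graph expectation. Throughout I will write $d := 2\brm{e}(G)/n$ for the average degree.

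First I would preprocess $G$ by invoking the reduction lemma advertised in the abstract to replace it with a subgraph $G'$ on some $n' \leq n$ vertices whose vertex degrees all lie within a constant factor of some $d' = \Theta(d)$, while $\brm{e}(G')/(n')^2 = \Theta(\brm{e}(G)/n^2)$. In particular the hypothesis $\brm{e}(G) \geq Cn^{2-\alpha'}$ transfers to $\brm{e}(G') \geq c(n')^{2-\alpha'}$. The reason for this reduction is that in an almost-regular host a uniformly random vertex sample will, with high probability, inherit essentially the correct edge density, which is exactly what the sampling step needs.

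Next I would choose a uniformly random $s$-subset $S \subseteq V(G')$, where $s$ is calibrated so that the expected edge count $d'(s/n')^2 \cdot n' \sim s^{2-\alpha}$ just matches the Tur\'an threshold for $H$; solving gives $s \approx (n'/d')^{1/\alpha}$. The hypothesis on $\brm{e}(G)$ ensures $s \leq n'$. Concentration together with $\ex(s,H) = O(s^{2-\alpha})$ then guarantees at least one copy of $H$ inside $G'[S]$. Double-counting pairs $(S,\phi)$ with $\phi$ a copy of $H$ inside $G'[S]$ produces the bound
\[
\#\{\text{copies of } H \text{ in } G'\} \;\geq\; \binom{n'}{s}\bigg/\binom{n'-v}{s-v} \;\asymp\; (n'/s)^{v}.
\]
Substituting $s \approx (n'/d')^{1/\alpha}$ and $d' \asymp \brm{e}(G)/n$ then yields a concrete polynomial lower bound in $\brm{e}(G)$ and $n$.

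The main obstacle is that the bound produced this way typically falls short of the conjectured random count $\brm{e}(G)^{e}/n^{2e-v}$, because only a single copy of $H$ is extracted per sampled $S$. Closing the gap requires either an iterative scheme in which one repeatedly removes an $H$-copy family and re-applies the Tur\'an bound to the residual graph, or a sharpened per-subset count (which is self-referential, since it is essentially asking for supersaturation inside $S$), or an input of extra structural information on $H$ such as a Sidorenko-type inequality, which is known only for restricted families. Without such additional input I do not see how to reach the tight count in full generality; indeed the conjecture remains open for arbitrary bipartite $H$. The paper's main contribution is a supersaturation theorem of precisely this flavour, but in the hypergraph setting of even linear cycles, where the cycle structure permits a Jensen-style convexity argument on counts of common linear paths between pairs of vertices, serving as a substitute for the missing general counting lemma.
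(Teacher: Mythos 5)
The statement you were asked to prove is Conjecture~\ref{bipartite-supersaturation}, the Erd\H{o}s--Simonovits cube-supersaturation conjecture. The paper does not prove it and does not claim to: it is quoted as motivation, remains open for general bipartite $H$, and the paper's actual contribution (Theorem~\ref{thm:main}) is the special case $H=C_{2k}$, generalized to linear cycles in linear $r$-graphs. So there is no ``paper's own proof'' to compare against, and no blind attempt could legitimately close the argument. Your proposal is honest about this, and your diagnosis of where the standard machinery stops is accurate.

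On the substance of your sketch: the sampling/double-counting step is the classical argument, but it is quantitatively far weaker than you may realize near the threshold. Taking $s\asymp (n/d)^{1/\alpha}$ so that $G[S]$ just exceeds the Tur\'an bound, the count $(n/s)^v$ degenerates: at $\brm{e}(G)=\Theta(n^{2-\alpha})$ one gets $s=\Theta(n)$ and hence only $O(1)$ copies, whereas the conjecture demands $\Theta(n^{v-e}d^{e})$ copies (for $H=C_{2k}$ at the threshold this is $n^2$ versus $1$). So the gap is not a constant-factor or iteration issue that removing copies and re-applying Tur\'an will fix; one genuinely needs a structural counting argument tailored to $H$. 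That is what the paper supplies for even cycles: not a Jensen/convexity argument on common-path counts as you guessed, but a breadth-first-search tree analysis in the Faudree--Simonovits style (Lemmas~\ref{balanced-root} and~\ref{lem:c2k-count}, Theorem~\ref{thm:almost-regular-count}), combined with the regularization reduction (Lemma~\ref{lem:decomposition}, Theorem~\ref{thm:reduction}, Corollary~\ref{cor:linear-cycle-reduction}) that you correctly anticipated as a preprocessing step. If your assignment was in fact to prove Theorem~\ref{thm:main} rather than the conjecture, you should redirect your effort to the tree-expansion counting argument; the averaging approach cannot reach the tight exponent.
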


The Erd\H{o}s-R\'enyi random graph $G(n,p)$ with $p=\frac{\brm{e}(G)}{\binom{n}{2}}$ shows that if Conjecture \ref{bipartite-supersaturation}
is true then it is best possible up to a multiplicative constant.  Conjecture~\ref{bipartite-supersaturation} is closely related to the famous Sidorenko's conjecture~\cite{Sidorenko}, which says that if $H$ is any bipartite graph then the random graph with edge density $p$ has in expectation
 the minimum number of homomorphic copies of $H$ over all graphs of the same order and edge density.
For dense enough host graphs $G$, i.e. (if we do not worry about finding the optimal $\alpha'$ in Conjecture~\ref{bipartite-supersaturation})
then any family of graphs that satisfy Sidorenko's conjecture also satisfy Conjecture~\ref{bipartite-supersaturation}. However,
works on Conjecture \ref{bipartite-supersaturation} often aim at finding the best possible threshold beyond which the counting
statement holds. In fact, in the same paper Erd\H{o}s and Simonovits made two even stronger conjectures by relaxing condition \eqref{eq:condeg}
to $\brm{e}(G)\geq C\cdot ex(n,H)$ and to $\brm{e}(G)\geq(1+\varepsilon) ex(n,H)$, respectively.
For details, see \cite{ES-cube-supersat}. At this stage, resolving these  stronger versions seem hopeless since the exact value or even just the order of magnitude of $ex(n,H)$ is only known for very  few bipartite graphs $H$. 

Now we turn our attention to the main focus of this paper, that is, the supersaturation of linear cycles of even length in linear $r$-uniform hypergraphs, or in short, $r$-graphs. First let us give the background for $r=2$.  A classic result of Erd\H{os} (unpublished) and of Bondy and Simonovits \cite{BS} establishes that $ex(n,C_{2k})=O(n^{1+1/k})$.
The explicit upper bound that Bondy and Simonovits gave was  $ex(n,C_{2k})\leq 100kn^{1+1/k}$.
This upper bound was later improved by Verstra\"ete to $8(k-1)n^{1+1/k}$ for sufficiently large $n$, by Pikhurko \cite{pikhurko} to  $(k-1)n^{1+1/k}+O(n)$, 
and by Bukh and Jiang \cite{BJ} to $80\sqrt{k} \log k n^{1+1/k}+O(n)$.
It is conjectured by Erd\H{o}s and Simonovits that $ex(n,C_{2k})=\Omega(n^{1+1/k})$ also holds.
This is known to be true for $k=2,3,5$. 

For supersaturation of even cycles, it was mentioned in~\cite{ES-cube-supersat} that
Simonovits proved Conjecture~\ref{bipartite-supersaturation} with $\alpha=\alpha'=1-1/k$. This proof has not been published at the time, but is expected to appear in an upcoming paper of Faudree and Simonovits \cite{FS-cycle}.
Very recently, Morris and Saxton \cite{MS} developed a balanced version of the supersaturation result for even cycles, 
which they use to obtain a sharp result on the number of $C_{2k}$-free graphs via the container method.
Since Morris and Saxton require a  balanced version  of supersaturation where the collection of $C_{2k}$'s they obtain 
are, informally speaking, uniformly distributed,  their proof is
quite involved. In this paper, we extend Simonovits' supersaturation result of even cycles to supersaturation of even linear cycles in linear $r$-graphs.
Our proof is self-contained and includes the $r=2$ case.

Before stating our main result, we need a few definitions. An $r$-graph $G$ is called {\it linear} if  any two edges share at most one vertex. For instance, all $2$-graphs are linear. The \emph{linear Tur\'an number} of an $r$-graph $H$, denoted by $ex_{l}(n,H)$ is defined to be the  the maximum number of edges  an $n$-vertex linear $r$-graph can have without creating a copy of $H$. The study of linear Tur\'an numbers of linear $r$-graphs is motivated in part by their similarity to  the Tur\'an numbers of $2$-graphs. Also, such studies 
were implicit in some classic extremal hypergraph problems, such as the famous $(6,3)$-problem (see \cite{BES} and \cite{RS}) which is asymptotically 
equivalent to determining the linear Tur\'an number of a linear $3$-cycle. The $(6,3)$-problem asks for the maximum size of an $n$-vertex
$3$-graph in which no six vertices span three or more edges. Note that the usual Tur\'an number $ex(n,H)$ of a linear $r$-graph $H$ and the linear Tur\'an number $ex_l(n,H)$ of $H$
are typically very different. The former is already at least $\binom{n-1}{r-1}$ as long as $H$ contains two disjoint edges
while the latter is $O(n^2)$.

An $r$-uniform {\it linear cycle} $C^{(r)}_m$of length $m$ is obtained from a $2$-uniform $m$-cycle $v_1v_2\dots v_mv_1$
by extending each $v_iv_{i+1}$ (indices taken modulo $m$) with an $(r-2)$-tuple $I_{i}$  such that the tuples $I_i$ are pairwise disjoint for distinct indices. Collier-Cartaino, Graber, and Jiang \cite{CGJ} extended aforementioned result of Bondy and Simonovits on  even cycles in $2$-graphs to linear cycles in linear $r$-graphs.
They showed that for all $r\geq 3$ and $m\geq 4$, $ex_l(n,C^{(r)}_{m})=O(n^{1+1/\lfloor m/2\rfloor})$. 
For even linear cycles, their result also works for uniformity $r=2$. It is interesting to note that when $r\geq 3$
the linear Tur\'an number of an odd linear cycle resembles that of an even linear cycle, which is very different from the situation for $r=2$. 

Our main result is the supersaturation version of the linear Tur\'an result for linear cycles, but only for even linear cycles.

\begin{theorem}\label{thm:main} Given $k, r\geq 2$, there exist  constants $C$, $c$ such that  if $G$ is an $n$-vertex linear $r$-graph with $\brm{e}(G)\geq Cn^{1+1/k}$
then $G$ contains at least $c\left(\frac{\brm{e}(G)}{n}\right)^{2k}$ copies of $C^{(r)}_{2k}$.
\end{theorem}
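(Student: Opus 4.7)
The plan is to mimic Simonovits' path-counting argument for $C_{2k}$ in graphs, adapted to linear paths in the linear $r$-graph setting. It proceeds in four steps — reduction, path counting, convexity, and path-pairs-to-cycles — and I expect only the last to be substantial. For the reduction, I would apply the reduction lemma promised in the abstract to replace $G$ by an almost-regular linear $r$-subgraph $G'\subseteq G$ on $n'\le n$ vertices with $\brm{e}(G')\ge c_0\,\brm{e}(G)$ and every degree between $d$ and $Kd$, where $d:=\brm{e}(G)/n$ and $c_0,K$ depend only on $k,r$. Choosing $C$ large makes $d\ge Cn^{1/k}$ exceed any constant depending only on $k,r$ that we will need.

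Next, a greedy breadth-first expansion from each $v\in V(G')$ builds linear $k$-paths: given a partial linear path $e_1,\ldots,e_{i-1}$ terminating at a vertex $u_{i-1}$, the next edge $e_i$ through $u_{i-1}$ must avoid the $(i-1)(r-1)$ previously used vertices other than $u_{i-1}$, and by linearity each such vertex shares at most one edge with $u_{i-1}$. Thus at least $d-(k-1)(r-1)\ge d/2$ choices of $e_i$ remain, followed by $r-1$ choices for the new interior vertex $u_i$. This yields at least $c_1 d^k$ linear $k$-paths from each $v$ for some $c_1=c_1(k,r)>0$, hence at least $c_1 n' d^k$ linear $k$-paths in $G'$ overall. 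Writing $t_{u,v}$ for the number of linear $k$-paths from $u$ to $v$, Cauchy--Schwarz gives
\[
\sum_{u,v} t_{u,v}^2 \;\ge\; \frac{\bigl(\sum_{u,v} t_{u,v}\bigr)^2}{(n')^2} \;\ge\; c_1^2\, d^{2k},
\]
so there are at least $c_2\,d^{2k}$ ordered pairs $(P_1,P_2)$ of linear $k$-paths in $G'$ sharing both endpoints.

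Finally, an ordered pair $(P_1,P_2)$ with $V(P_1)\cap V(P_2)=\{u,v\}$ and $E(P_1)\cap E(P_2)=\emptyset$ is precisely a copy of $C^{(r)}_{2k}$ — linearity across the two halves follows from vertex-disjointness combined with the linearity of each $P_i$ — and each copy of $C^{(r)}_{2k}$ arises from only $O(k)$ such ordered pairs. The theorem therefore reduces to showing that at most $\tfrac12 c_2 d^{2k}$ of the pairs counted above are \emph{bad}, in the sense of sharing an interior vertex or an edge. This is the main obstacle. My plan is to classify each bad pair by its first common interior vertex $w$, split both $P_1$ and $P_2$ at $w$ into four linear sub-paths, and bound the number of such four-stub configurations by summing over $w\in V(G')$ and the four stub-lengths, using the almost-regular degrees together with linearity of $G'$ to control the number of linear stubs of each prescribed length and endpoints. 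Taking $C$ in the hypothesis $d\ge Cn^{1/k}$ sufficiently large should absorb the resulting constants and force the bad-pair count below $\tfrac12 c_2 d^{2k}$, completing the argument and yielding at least $c\,(\brm{e}(G)/n)^{2k}$ copies of $C^{(r)}_{2k}$.
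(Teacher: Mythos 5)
Your Steps 1--3 are fine in spirit, but Step 4 --- which you correctly identify as the main obstacle --- is a genuine gap, and it is fatal for every $k\ge 3$. Run your own proposed bound on the pairs sharing an interior vertex $w$: there are $O(n\,d^{k})$ choices of $P_1$ (which fixes $u,v,w$), and a second $k$-path from $u$ to $v$ through $w$ with $w$ at position $b$ splits into a stub of length $b$ from $u$ to the \emph{prescribed} vertex $w$ and a stub of length $k-b$ from $v$ to $w$; each constrained stub costs at most $O(d^{\ell-1})$, so the bad pairs number $O(n\,d^{2k-2})$. But $n\,d^{2k-2}=d^{2k}\cdot(n/d^{2})=d^{2k}\cdot n^{1-2/k}/C^{2}$, which for $k\ge 3$ exceeds the Cauchy--Schwarz lower bound $c_2d^{2k}$ by a factor growing polynomially in $n$; no constant $C$ can absorb $n^{1-2/k}$. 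Nor is this an artifact of crude bounding: for $k\in\{2,3,5\}$ there are essentially regular $C_{2k}$-free graphs with $d=\Theta(n^{1/k})$ (polarity graphs, generalized quadrangles and hexagons), and in those graphs \emph{every} pair of distinct $k$-paths with common endpoints must share an interior vertex --- so near the threshold the degenerate pairs really can swallow everything, and there is no soft principle forcing them to be rare. (Your scheme does work for $k=2$, where $n/d^{2}=1/C^{2}$.) This is precisely why the paper does not glue pairs of paths: it grows a (rainbow) BFS tree from each vertex $x$, stops at the first level $h$ where $|L_{h+1}|<n^{1/k}|L_h|$, notes that the bipartite graph between $L_h$ and $L_{h+1}$ then has large average degree on \emph{both} sides, and uses Lemmas \ref{balanced-root}/\ref{lem:c2k-count} (resp.\ \ref{lem:c2k-count-hyper}) to produce $\Omega(n^{1+j/k})$ cycles through some fixed level $L_j$; summing over roots and dividing by the overcount $(\lambda Dn^{1/k})^{j}$ gives $\Omega(n^{2})=\Omega(d^{2k})$ cycles.

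A secondary inaccuracy is the reduction you invoke. Erd\H{o}s--Simonovits regularization produces a single almost-regular subgraph $G'$ that may live on $n'\ll n$ vertices, whose degrees are then $\Theta(\brm{e}(G')/n')$, not $\Theta(\brm{e}(G)/n)$; a count of $(d')^{2k}$ copies inside $G'$ need not recover $(\brm{e}(G)/n)^{2k}$. The paper's reduction (Lemma \ref{lem:decomposition}, Theorem \ref{thm:reduction}, Lemma \ref{lem:random-sample}) instead decomposes $G$ into \emph{edge-disjoint} almost-regular pieces $G_i$ for which $\sum_i \brm{e}(G_i)^{e}/\brm{v}(G_i)^{2e-v}$ is a constant fraction of $\brm{e}(G)^{e}/n^{2e-v}$, and then sparsifies randomly so that each piece has degree exactly $\Theta(m^{1/k})$ in terms of its own vertex count $m$. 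You would need this additive, collection-based form for the per-piece counts to transfer back to $G$.
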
 

It is not hard to see that this lower bound on the number of copies of  linear cycles is tight, up to a multiplicative constant. Indeed, for $r=2$, in the  Erd\H{o}s-R\'enyi graph $G(n,p)$, in expectation there are $\Theta(p^{2k} n^{2k})$ many $2k$-cycles. For $r\geq 3$, one may consider random subgraphs of almost complete partial Steiner systems. An $(n,\lambda, r,q)$-\emph{Steiner system} is defined to be an $r$-graph on $n$ vertices such that every $q$-tuple is in exactly $\lambda$ many $r$-edges.  A \emph{partial $(n,\lambda, r, q)$-Steiner system} is defined to be an $r$-graph on $n$ vertices such that every $q$-tuple is in at most $\lambda$ many $r$-edges.  It was proved by R\"odl~\cite{rodl} that for all  $n$, there are partial $(n,1, r, q)$-Steiner systems with  $(1-o(1))\binom{n}{q}$ edges. (Note that this is also implied by  recent solution of existence conjecture by Keevash~\cite{keevash}, while the $q=2$ case was proved much earlier by Wilson \cite{wilson1, wilson2, wilson3}.) By taking random subgraphs of such partial Steiner systems with $\lambda=1$ and $q=2$, one can  show that for every $n$ and $0\leq e\leq  {n \choose 2} $, there is a linear $r$-graph $G$ on $n$ vertices and   $\brm{e}(G)=e$ in which the number of copies of
the linear cycles of length $2k$ is $O\left(\left(\frac{e}{n}\right)^{2k}\right)$.

Our Theorem \ref{thm:main} includes the $r=2$ case as a special case and has a much simpler proof than the proof of Morris and Saxton of their 
stronger version of supersaturation. We use an approach developed by Faudree and Simonovits \cite{FS} in the study of 
the Tur\'an numbers of  so-called \emph{$\Theta$-graphs} which in its original form is not well-suited for
effective counting of $C_{2k}$'s. So we adapt their approach to facilitate counting.
Our proofs are  greatly simplified via a reduction tool which allows us to reduce the supersaturation problem in a general host $r$-graph to one which has some regularity property. Our regularization tool is an analogue of a regularization theorem of Erd\H{o}s and Simonovits for the Tur\'an problem and can be used for supersaturation problems of more general graphs.  The exact statement of our reduction lemma for linear $r$-graphs is slightly technical, so we refer the reader to Section~\ref{sec:reduction} for the precise statement (see Theorem~\ref{thm:reduction}). 

We organize the rest of the paper roughly as follows. In Section~\ref{sec:notation} we present notation and definitions.
In Section~\ref{sec:reduction} we develop our reduction results, which as we mentioned, may be of independent interest.
In Section~\ref{sec:graphs} and Section~\ref{sec:hypergraphs} we give proofs of the $r=2$  and $r\geq 3$ cases of Theorem~\ref{thm:main}, respectively. Even though we could have proved Theorem \ref{thm:main} for the general $r$ directly, we feel that proving the $r=2$ case first helps illustrating the main ideas. However, we will give two slightly different proofs for $r=2$ and $r\geq 3$, modulo the reduction mentioned earlier. Both of these proofs could be written for  all $r\geq 2$. The proof we present for the $r=2$ case is more constructive and
gives a better bound on constants. The proof we present for $r\geq 3$ follows the approach of Faudree-Simonovits 
more closely (modulo the reduction arguments) and is perhaps more intuitive and easier to follow for some readers.  In Section~\ref{sec:conclusion} we give some concluding remarks.


\section{Notation and Definitions} \label{sec:notation}

For an $r$-graph $G$, we let $\brm{v}(G)$ and $\brm{e}(G)$ denote its number of vertices and edges, respectively.
Let $\Delta(G), \delta(G)$ denote the maximum and minimum degree of $G$, respectively. 
Given a vertex $v\in V(G)$, the {\it link } of $v$ in $G$, denoted by $L_G(v)$, is defined to be
\[L_G(v)=\{ I\in [V(G)]^{(r-1)}|  I\cup \{v\}\in E(G)\},\]
where recall that $[V(G)]^{(r-1)}$ refers to the family of all $(r-1)$-subsets of $V(G)$. 

 Given a real $q\geq 1$, we say that  an $r$-graph $G$ is {\it $q$-almost-regular} if $\Delta(G)\leq q\delta(G)$ holds. 
 Given positive constants $C,\gamma$, we say that an $r$-graph $G$ is $(C,\gamma)$-dense if $\brm{e}(G)\geq C(\brm{v}(G))^\gamma$. 

 We  denote by \emph{$t_H(G)$} the number of copies of an $r$-graph $H$ in an $r$-graph $G$. In this paper we will always assume that $H$ has no isolated vertices, however, it is not hard to see that all the mentioned results  work for  all graphs.  Given a $2$-graph $F$, the {\it $r$-expansion} of $F$ is the $r$-graph obtained by replacing each edge $e$ of $F$
with $e\cup I_e$, where $I_e$ is an $(r-2)$-tuple of new vertices, such that the $I_e$'s are pairwise disjoint for distinct edges $e$.  Note that $F^{(2)}=F$.  If $G$ is an $r$-expansion of a $2$-graph $F$, then we call $F$ a \emph{skeleton} of $G$. So, for example, the linear $r$-cycle, $C_m^{(r)}$ is  the $r$-expansion of the $2$-uniform $m$-cycle. Now we define the notion of supersaturation of expansions in linear $r$-graphs. 
\begin{definition} \label{def:supersat}
\label{def:supersat}Given a $2$-graph $F$ with $v$ vertices and $e$ edges, $r\geq 2$ and $c$  a positive real, we say that a linear $r$-graph $G$ \emph{$c$-supersaturates}  $F^{(r)}$, if $t_{F^{(r)}} (G)\geq c \frac{(\brm{e}(G))^e}{(\brm{v}(G))^{2e-v}}$.
\end{definition}

Note that  for $r=2$ this definition is the usual supersaturation for $2$-graphs ( as in Conjecture~\ref{bipartite-supersaturation}), that is, a graph $G$ $c$-supersaturates another graph $F$ with $e$ edges and $v$ vertices if  $t_{F} (G)\geq c \frac{(\brm{e}(G))^e}{(\brm{v}(G))^{2e-v}}$.  As we have discussed earlier, for $2$-graphs the bound  on the number of copies of $F^{(r)}=F$   in Definition~\ref{def:supersat} is  achieved up to a multiplicative constant by the random graph of the same edge density.  For general $r\geq 3$, the bound  is tight as well, obtained  by random subgraphs of appropriate edge density in almost complete partial Steiner systems, which as we discussed in the introduction exist.
\begin{proposition} \label{prop:sat-lower}
Let $F$ be a $2$-uniform graph with $v$ vertices and $e$ edges and let $r\geq 2$. For all $n$ and all $0<E\leq \binom{n}{2}$
there exist $n$-vertex linear $r$-graphs $G$ with $\brm{e}(G)=E$ in which the number of copies of
$F^{(r)}$ is $O( \frac{E^e}{n^{2e-v}})$. 
\end{proposition}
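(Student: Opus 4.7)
The plan is to realize $G$ as a random sub-hypergraph of an almost-complete partial Steiner system. By R\"odl's theorem \cite{rodl}, there is a partial $(n,1,r,2)$-Steiner system $S$ on $[n]$ with $\brm{e}(S)=\Theta(n^{2})$; any sub-hypergraph of $S$ is automatically linear. The key structural observation is that, because $S$ is linear, a labeled copy of $F^{(r)}$ inside $S$ is completely determined by the image of the $v$ skeleton vertices: for each edge $uw\in E(F)$, the pair $\phi(u)\phi(w)$ lies in at most one hyperedge of $S$, so once $\phi$ is fixed on the skeleton the $(r-2)$-tuples $I_{uw}$ are forced. In particular $t_{F^{(r)}}(S)\leq n(n-1)\cdots(n-v+1)\leq n^{v}$.

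For the main range $E_{0}\leq E\leq \brm{e}(S)/2$ (where $E_{0}=E_{0}(r,F)$ is an absolute constant), set $p:=2E/\brm{e}(S)\in(0,1]$ and form the random sub-hypergraph $G_{p}$ of $S$ by keeping each hyperedge independently with probability $p$. By the forcing observation,
\[
\mathbb{E}[\brm{e}(G_{p})]=2E\qquad\text{and}\qquad \mathbb{E}\bigl[t_{F^{(r)}}(G_{p})\bigr]\leq n^{v}p^{e}=O\bigl(E^{e}/n^{2e-v}\bigr).
\]
Chebyshev's inequality applied to $\brm{e}(G_{p})$, a sum of independent Bernoullis with variance at most $2E$, gives $\Pr[\brm{e}(G_{p})<E]\leq 2/E\leq 1/2$ for $E\geq 4$, while Markov gives $\Pr[t_{F^{(r)}}(G_{p})>3\mathbb{E}[t_{F^{(r)}}(G_{p})]]<1/3$. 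For $E\geq E_{0}$ these two events fail simultaneously with positive probability, yielding a realization with at least $E$ edges and at most $O(E^{e}/n^{2e-v})$ copies of $F^{(r)}$; deleting arbitrary edges down to exactly $E$ cannot increase $t_{F^{(r)}}$.

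The two boundary regimes are handled by short deterministic constructions. For $E<E_{0}$, take $G$ to be a matching of $E$ pairwise disjoint $r$-edges (possible whenever $n\geq rE_{0}$): if $F$ is not a perfect matching then $v<2e$ and some vertex of $F$ has degree at least $2$, so any putative $F^{(r)}$-copy would need two hyperedges sharing a skeleton vertex, giving zero copies against the target bound $O(E^{e}/n^{2e-v})=o(1)$; if $F$ is a perfect matching then $v=2e$ and the matching of $E$ hyperedges contains $O(E^{e})$ copies of $F^{(r)}$, matching the target bound $O(E^{e})$. For $E>\brm{e}(S)/2$, take $G=S$ with $\brm{e}(S)-E$ arbitrary edges removed; then $t_{F^{(r)}}(G)\leq n^{v}$, while $E^{e}/n^{2e-v}=\Theta(n^{v})$ since $E=\Theta(n^{2})$ in this regime. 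The only conceptual step is the forcing observation that bounds $t_{F^{(r)}}(S)\leq n^{v}$; this is the crux of the argument and is immediate from the linearity of $S$, after which the rest is a routine alteration argument.
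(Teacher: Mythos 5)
Your proposal is correct and follows exactly the route the paper indicates for this proposition (which it declares folklore and omits): a random sub-hypergraph of an almost-complete partial Steiner system, the observation that linearity forces the $(r-2)$-tuples once the skeleton is placed (hence $t_{F^{(r)}}(S)\leq n^v$), and standard first/second-moment concentration plus an alteration. The only caveat is at the very top of the stated range: for $r\geq 3$ a linear $r$-graph has at most $\binom{n}{2}/\binom{r}{2}$ edges, so values of $E$ between $\brm{e}(S)$ and $\binom{n}{2}$ cannot be realized at all --- but that is a defect of the proposition's stated range rather than of your argument.
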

The proof of Proposition \ref{prop:sat-lower} is folklore, so we omit it. One can take a random subgraph of an almost complete Steiner systems and apply standard concentration inequalities.

Let us remark that the bound given in Definition~\ref{def:supersat} is 
specific to the setting where the host graph is linear and embedded graphs is an expansion.
In a different setting, the supersaturation problem typically becomes very different and the expected  optimal count  
is expected to be different.  In fact, as mentioned in the introduction, the thresholds for forcing even just one copy of $F^{(r)}$
can be very different depending on whether we require the host graph to be linear or not.

Recall that an $r$-graph $G$ is {\it $r$-partite} with an $r$-partition $A_1,\dots, A_r$ if each edge of $G$ contains exactly one vertex from each $A_i$.  Given an $r$-partite $r$-graph $G$ with an $r$-partition $A_1,\dots, A_r$, we define a {\it $2$-projection} of $G$ to be the $2$-graph we obtain by taking the projection of the edges of $G$ 
onto two of its partition classes. More formally, for $1\leq i<j\leq r$, we define the $(i,j)$-{\it projection} of $G$, denoted by $P_{i,j}(G)$, 
to be  a $2$-graph whose edge set is defined as
\[E(P_{i,j}(G))=\{e \cap (A_i\cup A_j) |\, e\in E(G)\}.\]

Note that when $G$ is  linear, for any $1\leq i<j\leq r$ the map $e\rightarrow e\cap{(A_i\cup A_j)}$ is a bijection, and in particular,  $\brm{e}(G)=e(P_{i,j}(G))$. Next, we give  the following slightly technical definition of what we call \emph{projection-restricted supersaturation} in linear $r$-partite  $r$-graphs. 
In the next section we show that if we have this form of supersaturation for an expansion $F^{(r)}$ in  linear $r$-graphs that have an almost-regular $2$-projection, 
then we also get supersaturation of $F^{(r)}$ in all linear $r$-graphs. 

\begin{definition} \label{def:proj-supersat}
Given a $2$-graph $F$ with $v$ vertices and $e$ edges, $r\geq 2$ and $c$  a positive real. For a linear, $r$-partite $r$-graph $G$  and any $2$-projection of it, call $P$, we say  $(G,P)$ $c$-supersaturates $F^{(r)}$  if $$t_{F^{(r)}} (G)\geq c \frac{(\brm{e}(P))^e}{(\brm{v}(P))^{2e-v}}.$$
\end{definition}

Note that for $2$-graphs, Definition \ref{def:proj-supersat} coincides with Definition \ref{def:supersat}.

\section{Reduction results}\label{sec:reduction}

 Erd\H{o}s and Simonovits \cite{ES-almost-regular} proved the following ``regularization" theorem for $2$-graphs.

 \begin{theorem}{\rm \cite{ES-almost-regular}} \label{ES-almost-regular}
 Let $0<\alpha<1$ be a real and $q=20\cdot 2^{(1/\alpha)^2}$. There exists $n_0=n_0(\alpha)$ such that if $G$ is a $(1, 1+\alpha)$-dense  graph on $n\geq n_0$ vertices then there exists a $q$-almost-regular subgraph of $G$, say $G'$, which is  $(2/5, 1+\alpha)$-dense such that $v(G')>n^{\alpha\frac{1-\alpha}{1+\alpha}}$.
 \end{theorem}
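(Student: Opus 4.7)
The plan is to iteratively pass to subgraphs of $G$, at each step removing vertices whose degrees are much smaller or much larger than the current average, until what remains is $q$-almost-regular while still being $(2/5, 1+\alpha)$-dense. Concretely, I would construct a nested sequence $G = G_0 \supseteq G_1 \supseteq G_2 \supseteq \cdots$, stopping at the first $G_t$ with $\Delta(G_t) \leq q\, \delta(G_t)$.

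For a single cleaning step applied to $H = G_i$ with $n_i$ vertices and $m_i$ edges, I would partition $V(H)$ into dyadic degree classes $U_j = \{v : 2^j \leq d_H(v) < 2^{j+1}\}$. Since $\sum_j 2^{j+1}|U_j| \geq 2m_i$ and only $O(\log n_i)$ classes are nonempty, a double pigeonhole on pairs $(j,k)$ of dyadic classes yields an induced subgraph $H'$ on $\Omega(m_i/\log^2 n_i)$ edges whose vertex-degrees in $H$ lie in a dyadic window of width at most $2^{L+1}$ for a parameter $L$ to be optimized. Applying the classical \emph{``keep a subgraph of minimum degree at least half the average''} trick to $H'$ then produces $G_{i+1}$, on which one iterates. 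Choosing $L$ appropriately, the ratio $\Delta/\delta$ contracts geometrically in each round, and after at most $O((1/\alpha)^2)$ rounds it drops below $q = 20 \cdot 2^{(1/\alpha)^2}$.

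The density invariant $e(G_i) \geq (2/5) v(G_i)^{1+\alpha}$ should be preserved across rounds because each cleaning step loses only a polylog-of-$v$ factor in edges; taking $n_0 = n_0(\alpha)$ large enough absorbs these losses into the constant $2/5$. The final inequality $v(G_t) > n^{\alpha(1-\alpha)/(1+\alpha)}$ then follows from the preserved density combined with the trivial bound $e(G_t) \leq \binom{v(G_t)}{2}$. The main obstacle is the joint bookkeeping of vertex count, edge count, and degree spread through all $O((1/\alpha)^2)$ iterations: the dyadic-window widths $L_i$ must be tuned so that none of these quantities collapses prematurely, and the exponent $\alpha(1-\alpha)/(1+\alpha)$ in the vertex bound is essentially the optimum of this trade-off between the edge budget $n^{1+\alpha}$ and the maximum allowable vertex-loss across the iteration.
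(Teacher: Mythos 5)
First, a point of reference: the paper does not actually prove Theorem~\ref{ES-almost-regular}; it is quoted from Erd\H{o}s--Simonovits \cite{ES-almost-regular}, and the closest in-paper argument is the proof of Lemma~\ref{lem:decomposition}, whose mechanism is instructive for what your sketch is missing. The central gap is the pigeonhole step. It is not true that some pair of dyadic degree classes lying within a bounded ratio $2^{L+1}$ of each other must span $\Omega(m_i/\log^2 n_i)$ edges: take $G=K_{n^{\alpha},n}$, which is $(1,1+\alpha)$-dense, yet every edge joins a vertex of degree $n$ to a vertex of degree $n^{\alpha}$, so every narrow window spans zero edges. If instead you allow the two chosen classes to be far apart, then $H'$ inherits the full degree spread, and neither the restriction nor the minimum-degree cleanup reduces $\Delta/\delta$ (in $K_{n^{\alpha},n}$ the cleanup keeps everything). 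More generally, controlling degrees \emph{in $H$} says nothing about degrees \emph{in the induced subgraph} $H'$, which is what $\Delta(G')/\delta(G')$ refers to, so the claimed geometric contraction of $\Delta/\delta$ has no mechanism behind it. The correct almost-regular subgraph of $K_{n^{\alpha},n}$ is a balanced $K_{n^{\alpha},\Theta(n^{\alpha})}$, obtained by discarding almost all of the low-degree side --- an operation absent from your scheme. The genuine proof (and, in one-step form, Lemma~\ref{lem:decomposition}) branches on exactly this dichotomy: either deleting the top fraction of vertices by degree leaves most of the edges, in which case $\Delta\le p\cdot d_{\mathrm{avg}}$ holds automatically and a single cleanup finishes; or the high-degree vertices carry most edges, in which case one recurses into a much smaller but strictly denser graph. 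It is the density increase per recursive step that bounds the depth, produces the exponent $(1/\alpha)^2$ in $q$, and yields $v(G')>n^{\alpha(1-\alpha)/(1+\alpha)}$ as the worst-case shrinkage along that recursion; note the conclusion itself permits $v(G')$ to be vastly smaller than $n$, which is incompatible with your picture of both vertex and edge counts being roughly preserved.

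The second gap is the density bookkeeping. A multiplicative $(\log n)^{\Theta(1)}$ edge loss per round, compounded over $\Theta((1/\alpha)^2)$ rounds, cannot be absorbed into the fixed constant $2/5$ by enlarging $n_0$: the loss grows with $n$, while nothing in your construction forces $v(G_i)$ to shrink correspondingly, so $e(G_t)\ge \tfrac{2}{5}\,v(G_t)^{1+\alpha}$ can already fail after one round in which the chosen window keeps only an $O(1/\log n)$ fraction of the edges on essentially all of the vertices. Relatedly, the final vertex bound cannot follow from ``preserved density plus $e\le\binom{v}{2}$'': the inequalities $e\ge \tfrac{2}{5}v^{1+\alpha}$ and $e\le v^2/2$ together give only $v\ge O(1)$. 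Both the original argument and Lemma~\ref{lem:decomposition} avoid these problems by losing only a single constant factor, at the terminal step, and nothing along the recursion.
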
 

Theorem \ref{ES-almost-regular} is a useful tool for the Tur\'an problem for $H$. Indeed, given a dense enough $G$, we may first find an almost-regular subgraph $G'$ that has similar density 
as $G$ and look for a copy of  $H$ in $G'$. This theorem itself is not sufficient for establishing supersaturation results for $2$-graphs
since we look to force many copies of $H$ in $G$. By going into $G'$, we might lose many copies of $H$. What we likely need is the existence
of a collection of dense enough almost regular subgraphs of $G$ which together supply the number of copies of $H$ that we need.
Indeed, this is the rough idea behind the following lemma, which may be viewed as some kind of extension of Theorem \ref{ES-almost-regular}. For any $s,t$ integers, $t\geq s\geq 1$, and a graph $H$, we define $f(H, s, t)=\frac{(\brm{e}(H))^s}{(\brm{v}(H))^t}$.

\begin{lemma}\label{lem:decomposition}
Let $\alpha$ be a real and $s,t$ integers, where $0<\alpha<1$ and $t\geq s\geq 1$, then there exist positive  reals $C_0= C_0(\alpha,s,t)$ and $q=q_{\ref{lem:decomposition}}(\alpha,s,t)$ such that the following holds. For every $C\geq C_0$ if $G$ is a $(C,1+\alpha)$-dense graph $G$ then it
contains a collection of edge-disjoint subgraphs $G_1,\dots, G_m$ 
satisfying
\begin{enumerate}
\item $\forall i\in [m]$, $G_i$ is $q$-almost-regular and $(\frac{1}{4}C,1+\alpha)$-dense,
\item $\sum_{i=1}^m f(G_i,s,t)\geq \frac{1}{4^s} f(G,s,t)$.
\end{enumerate}
\end{lemma}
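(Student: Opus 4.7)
The plan is to apply Theorem \ref{ES-almost-regular} iteratively, in a scaled form, to extract the family $G_1,\dots,G_m$, and then verify the two conclusions.

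First I would verify the following scaling form: if $H$ is $(D,1+\alpha)$-dense with $D\geq 1$ and $v(H)$ sufficiently large, then $H$ contains a $q$-almost-regular subgraph $H'$ with $e(H')\geq (D/2)\, v(H')^{1+\alpha}$. This should follow by a direct inspection of the proof of Theorem~\ref{ES-almost-regular}, since the loss factor $2/5$ there is multiplicative in the initial density. Starting from $G^{(0)}:=G$, while $G^{(i)}$ is $(C/2,1+\alpha)$-dense I apply the scaled statement to obtain an almost-regular $G_i\subseteq G^{(i)}$ with $e(G_i)\geq (C/4)\, v(G_i)^{1+\alpha}$, and set $G^{(i+1)}:=G^{(i)}\setminus E(G_i)$. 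The iteration halts after $m$ steps with $e(G^{(m+1)})<(C/2) n^{1+\alpha}$, so that $\sum_i e(G_i)\geq e(G)-(C/2)n^{1+\alpha}\geq e(G)/2$; this yields condition~(1).

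For condition~(2), the density of each $G_i$ gives $v(G_i)\leq (4e(G_i)/C)^{1/(1+\alpha)}$, and hence
\[
f(G_i,s,t) \;=\; \frac{e(G_i)^s}{v(G_i)^t} \;\geq\; \Bigl(\tfrac{C}{4}\Bigr)^{\!t/(1+\alpha)} e(G_i)^{\gamma}, \qquad \gamma\;:=\;s-\tfrac{t}{1+\alpha}.
\]
When $0\leq\gamma\leq 1$, the subadditivity $(\sum x_i)^{\gamma}\leq\sum x_i^{\gamma}$ combined with $\sum_i e(G_i)\geq e(G)/2$ gives $\sum_i e(G_i)^{\gamma}\geq (e(G)/2)^{\gamma}$, and a direct calculation using $e(G)\geq Cn^{1+\alpha}$ and choosing $C_0$ sufficiently large in terms of $\alpha,s,t$ then yields $\sum_i f(G_i,s,t)\geq f(G,s,t)/4^s$.

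The hard part will be the regime $\gamma>1$, which is in fact the case relevant to the $C^{(r)}_{2k}$ application (there $s=t=2k$ and $\gamma=2k/(k+1)$). In this regime the power-mean inequality $\sum e(G_i)^{\gamma}\geq (\sum e(G_i))^{\gamma}/m^{\gamma-1}$ loses a factor of $m^{\gamma-1}$, while the scaled regularization only guarantees $v(G_i)\geq v(G^{(i)})^{\alpha(1-\alpha)/(1+\alpha)}$, so $m$ can be very large. To overcome this I would tighten the stopping rule so that the iteration halts as soon as the edge count has halved (forcing the first $G_i$ extracted at each stage to carry $e(G_i)$ comparable to the current edge count), and then run the procedure in $O(1)$ dyadic phases indexed by the density of the remaining graph, invoking the alternative bound $\sum_i e(G_i)^{\gamma}\geq (\max_i e(G_i))^{\gamma}$ and summing contributions over phases to recover the required estimate.
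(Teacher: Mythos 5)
Your setup (peeling almost-regular pieces via Theorem \ref{ES-almost-regular} until the remainder is sparse) does deliver edge-disjointness, condition (1), and $\sum_i \brm{e}(G_i)\geq \brm{e}(G)/2$, and your reduction of condition (2) to bounding $\sum_i \brm{e}(G_i)^{\gamma}$ with $\gamma=s-t/(1+\alpha)$ is correct. But the case you yourself flag as hard, $\gamma>1$, is exactly where the argument breaks, and the proposed repair does not close it. Theorem \ref{ES-almost-regular} gives \emph{no} lower bound on $\brm{e}(G')$ in terms of $\brm{e}(G)$: it only guarantees $\brm{v}(G')>n^{\alpha(1-\alpha)/(1+\alpha)}$, so every piece you extract may satisfy $\brm{e}(G_i)=O\bigl(n^{(1+\alpha)\alpha(1-\alpha)/(1+\alpha)}\bigr)=O(n^{\alpha(1-\alpha)})$, polynomially smaller than $\brm{e}(G)=\Theta(Cn^{1+\alpha})$. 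In the extremal regime $\brm{e}(G)=\Theta(Cn^{1+\alpha})$ a direct computation shows that $\sum_i (C/4)^{t/(1+\alpha)}\brm{e}(G_i)^{\gamma}\geq \brm{e}(G)^{s}/(4^{s}n^{t})$ forces $\min_i \brm{e}(G_i)=\Omega(\brm{e}(G))$ (and a fortiori $\max_i \brm{e}(G_i)=\Omega(\brm{e}(G))$): with $M$ pieces of size $E_0$ each one needs $E_0^{\gamma-1}\gtrsim C^{\gamma-1}n^{(1+\alpha)(\gamma-1)}$, i.e.\ $E_0\gtrsim \brm{e}(G)$. So the bound $\sum_i \brm{e}(G_i)^{\gamma}\geq(\max_i \brm{e}(G_i))^{\gamma}$ is far too weak, the tightened stopping rule does not make the \emph{first} extracted piece any larger (the regularization theorem, not the stopping rule, dictates its size), and the dyadic phases do not help: while the small pieces are being peeled the density of the remainder barely changes, so essentially everything happens in one phase. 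In short, your scheme needs a single almost-regular subgraph carrying a constant fraction of all edges, which Theorem \ref{ES-almost-regular} cannot supply.

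The paper's proof avoids this by gaining in the \emph{denominator} $\brm{v}(G_i)^{t}$ rather than in the numerator. It inducts on $n$ after partitioning $V(G)$ into $p$ equal parts $A_1,\dots,A_p$ with $A_1$ holding the highest-degree vertices. If at most half the edges meet $A_1$, then $G-A_1$ has maximum degree at most $p\,d(G)$, and deleting low-degree vertices yields a \emph{single} $8p$-almost-regular piece with at least $\brm{e}(G)/4$ edges, which alone satisfies (2). Otherwise it recurses into the graphs $G[A_1\cup A_i]$, each on only $2n/p$ vertices: restricted to the dense pairs $\cI_1$ (which still carry $\brm{e}(G)/4$ edges), convexity costs a factor $|\cI_1|^{s-1}\leq p^{s-1}$ in $\bigl(\sum e_i\bigr)^{s}$, but the denominator shrinks by $(p/2)^{t}$, and since $t\geq s$ and $p^{t-s+1}\geq 2^{2s+t}$ the net factor exceeds $1$, so the $4^{-s}$ loss does not compound along the recursion. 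If you want to salvage your route you would need to replace Theorem \ref{ES-almost-regular} by such a dichotomy; as written, the $\gamma>1$ case is a genuine gap.
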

\begin{proof}
While we specify the choice of $q$ explicity, we don't do so for $C_0$. We assume $C_0$ is sufficiently large
as a function of $\alpha$, $s$, and $t$. Let $p=\lceil 2^{\max\{\frac{4}\alpha, \frac{2s+t}{t-s+1}\}}\rceil$ and $q=8p$.
By the definition of $p$, we have
\begin{equation} \label{q-conditions}
p^\alpha\geq 16 \mbox{ and } p^{t-s+1}\geq 2^{2s+t}.
\end{equation}

Suppose $G$ has $n$ vertices. 
Let us partition $V(G)$ into $p$ sets $A_1,\dots, A_p$ of almost equal sizes, i.e. each of size $\lceil n/p\rceil$
or $\lfloor n/p \rfloor$, 
such  that $A_1$ contains vertices of the highest degrees in $G$. For convenience, we will drop the
ceilings and floors in our arguments as doing so does not affect the arguments except for the slight changes
to constants. 

We now prove our statement by induction on $n$. When $n<q$ the claim holds trivially
since either $G$ itself is $q$-almost-regular or no $(C,1+\alpha)$-dense
graph on $n<q$ many vertices exists. For the induction step, we consider two cases.

\medskip

{\bf Case 1.} The number of edges in $G$ with at least one endpoint in $A_1$ is at most $\frac{\brm{e}(G)}{2}$.

\medskip
Let $d=d(G)$ be the average degree of $G$.
By our definition of $A_1$, for each vertex $v\in V(G) \setminus A_1$,
we have $d_G(v)\leq pd$; otherwise
$\sum_{u\in A_1} d_G(u)>pd (n/p)=nd$, a contradiction.
Let $G'=G-A_1$. Then 
\[\Delta(G')\leq pd.\]
and $\brm{e}(G')\geq \frac{\brm{e}(G)}{2}$, by initial assumptions. By iteratively deleting vertices whose degree becomes less than $\frac{d}{8}$, we obtain a subgraph
$G''\subseteq G'$ with $e(G'')\geq \brm{e}(G')-\frac{nd}{8}\geq \frac{\brm{e}(G)}{4}$ and $\delta(G'')\geq \frac{d}{8}$.

Since $\Delta(G'')\leq \Delta(G')\leq pd$ and $\delta(G'')\geq d/8$, $G''$ is $8p$-almost-regular,
that is, $G''$ is $q$-almost-regular.
Also, 
\[e(G'')\geq\frac{1}{4}\brm{e}(G)\geq  \frac{1}{4}C [\brm{v}(G)]^{1+\alpha}\geq \frac{1}{4}C[v(G'')]^{1+\alpha}.\]
Thus, $G''$ is $(\frac{1}{4}C,1+\alpha)$-dense. 
Now,
\[f(G'')=\frac{[e(G'')]^s}{[v(G'')]^t}\geq \frac{[\brm{e}(G)/4]^s}{[\brm{v}(G)]^t}\geq \frac{1}{4^s} \frac{[\brm{e}(G)]^s}{[\brm{v}(G)]^t}=\frac{1}{4^s}f(G).\]
So the claim holds by letting our collection of subgraphs be $\{G''\}$.

\medskip

{\bf Case 2.}  The number of edges in $G$ with at least one endpoint in $A_1$ is more than $\frac{1}{2}\brm{e}(G)$.

\medskip

For each $i=2,\dots, p$, let $G_i=G[A_1\cup A_i]$, $n_i=\brm{v}(G_i)$ and $e_i=\brm{e}(G_i)$. Then 
for each $i\in \{2,\dots, p\}$, $n_i=\frac{2n}{p}$. Also, $\sum_{i=2}^p e_i\geq \frac{\brm{e}(G)}{2}$.
Let $\cI=\{2,\dots, p\}$.
Define
\[\cI_1=\left\{i\in \cI: e_i\geq Cn_i^{1+\alpha} \right\} \quad \mbox { and } \quad \cI_2=\cI\setminus \cI_1.\]

Recall that  $p^\alpha\geq 16$.
By the definition of $\cI_2$ and the fact that $n_i=2n/p$ for each $i\in [p]$, we have
\[\sum_{i\in \cI_2} e_i\leq \frac{C}{p^{1+\alpha}} \sum_{i\in \cI_2} (2n)^{1+\alpha} 
\leq \frac{C|\cI_2|2^{1+\alpha}n^{1+\alpha}}{p^{1+\alpha}}
\leq \frac{4Cn^{1+\alpha}}{p^\alpha}\leq \frac{C}{4}n^{1+\alpha}
\leq \frac{\brm{e}(H)}{4}=\frac{\brm{e}(G)}{4}.\]  

Hence,
\begin{equation} \label{ei-sum-graph}
\sum_{i\in \cI_1} e_i\geq \frac{\brm{e}(G)}{4}.
\end{equation}
For each $i\in \cI_1$ since $e_i\geq Cn_i^\gamma$ and $n_i<n$, by the induction hypothesis,
$G_i$ contains a collection of edge-disjoint subgraphs $G_i^1,\dots, G_i^{m_i}$ each of which is
$q$-almost-regular and $(\frac{1}{4}C,1+\alpha)$-dense such that 
\[\sum_{j=1}^{m_i} f(G_i^j)\geq \frac{1}{4^s} f(G_i).\]
Hence, 
\begin{equation}\label{f-sum}
\sum_{i\in \cI_1} \sum_{j=1}^{m_i} f(G_i^j)\geq \frac{1}{4^s}\sum_{i \in \cI_1} f(G_i)=\frac{1}{4^s} \sum_{i\in\cI_1} \frac{e_i^s}{n_i^t}
=\frac{p^t}{4^s(2n)^t} \sum_{i\in \cI_1} {e_i^s}.
\end{equation}
Hence, by \eqref{ei-sum-graph},  \eqref{f-sum}, $p^{t-s+1}\geq 2^{2s+t}$, and the convexity of the function $x^s$, we have
\[\sum_{i\in \cI_1} \sum_{j=1}^{m_i} f(G_i^j)\geq \frac{p^t}{4^s(2n)^t} \frac{(\sum_{i\in \cI_1} e_i)^s}{|\cI_1|^{s-1}}
\geq \frac{p^{t-s+1}}{4^{2s}2^{t}} \frac{e^s}{n^t}\geq \frac{1}{4^s}f(G).\]
Hence the claims holds by letting $\{G_i^j: i\in \cI_1, 1\leq j\leq m_i\}$ be our collection of subgraphs of $G$.
This completes Case 2 and the proof. 
\end{proof}

Now we are ready to state our main result of the section, which informally says that if projection-restricted supersaturation holds for those host graphs which have almost-regular projections, then the supersaturation holds for all graphs. The formal  statement follows.

\begin{theorem}\label{thm:reduction} Let $\alpha\in (0,1)$ be a real and $r\geq 2$. Let $F$ be a graph with $v$ vertices and $e$ edges, where $e\geq v$.
There exists a real $q=q(\alpha, F)\geq 1$ such that the following holds.
Suppose $C,c>0$ are constants such that for every linear $r$-partite $r$-graph $G$ that has a $(C,1+\alpha)$-dense and $q$-almost-regular $2$-projection $P$,  $(G,P)$ $c$-supersaturates $F^{(r)}$. Then  there exist $C', c'$ such that  every linear $r$-partite $r$-graph  that is $(C', 1+\alpha)$-dense $c'$-supersaturates $F^{(r)}$.
\end{theorem}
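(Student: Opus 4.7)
The plan is to apply Lemma \ref{lem:decomposition} to a $2$-projection of the given linear $r$-partite host hypergraph and then lift each almost-regular dense piece of the projection back up to an $r$-partite subhypergraph on which the projection-restricted supersaturation hypothesis applies directly. I will take $q(\alpha, F) := q_{\ref{lem:decomposition}}(\alpha, e, 2e-v)$; note that the parameters $(s,t) := (e,\,2e-v)$ satisfy $t \geq s \geq 1$ thanks to the assumption $e \geq v$.

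More precisely, let $G$ be a $(C',1+\alpha)$-dense linear $r$-partite $r$-graph with parts $A_1,\dots,A_r$, fix any $1 \leq i < j \leq r$, and consider $P := P_{i,j}(G)$. Linearity of $G$ gives $\brm{e}(P)=\brm{e}(G)$, while $V(P) \subseteq V(G)$ gives $\brm{v}(P) \leq \brm{v}(G)$; in particular $P$ is itself $(C',1+\alpha)$-dense. Applying Lemma \ref{lem:decomposition} to $P$ with parameters $(s,t)=(e,2e-v)$ produces edge-disjoint subgraphs $P_1,\dots,P_m \subseteq P$ that are each $q$-almost-regular and $(\tfrac{1}{4}C',1+\alpha)$-dense, with
\[
\sum_{l=1}^m f(P_l, e, 2e-v) \;\geq\; \frac{1}{4^e}\, f(P, e, 2e-v).
\]
Choosing $C' \geq \max\{C_0(\alpha, e, 2e-v),\, 4C\}$ ensures in addition that each $P_l$ satisfies the density hypothesis of the theorem.

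Next I would define $G_l$ as the $r$-partite subhypergraph of $G$ consisting of exactly those edges whose $(i,j)$-projection belongs to $P_l$. Since $G$ is linear, the map $E(G) \to E(P)$ sending $f \mapsto f \cap (A_i \cup A_j)$ is a bijection, so the $G_l$ are pairwise edge-disjoint linear $r$-partite $r$-graphs and $P_l$ is precisely the $(i,j)$-projection $P_{i,j}(G_l)$. The theorem's hypothesis now applies to each $(G_l, P_l)$ and yields $t_{F^{(r)}}(G_l) \geq c\, f(P_l, e, 2e-v)$. A copy of $F^{(r)}$ in $G$ lies in some $G_l$ only if all of its edges lie in that single $G_l$, so copies coming from distinct $G_l$ are distinct, and summing gives
\[
t_{F^{(r)}}(G) \;\geq\; \sum_{l=1}^m t_{F^{(r)}}(G_l) \;\geq\; \frac{c}{4^e}\, f(P, e, 2e-v) \;\geq\; \frac{c}{4^e}\cdot\frac{\brm{e}(G)^e}{\brm{v}(G)^{2e-v}},
\]
where the last inequality uses $\brm{e}(P)=\brm{e}(G)$, $\brm{v}(P)\leq \brm{v}(G)$, and $2e-v \geq 0$. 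Setting $c' := c/4^e$ gives the desired conclusion.

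The whole argument is essentially bookkeeping; the substantive input is Lemma \ref{lem:decomposition}. The delicate points I would be careful about are: verifying that the edge-bijection afforded by linearity pulls back an edge-disjoint decomposition of $P$ to one of $G$ in which each piece's $(i,j)$-projection is exactly the corresponding $P_l$ (so that the hypothesis genuinely applies), and tuning $C'$ so as to simultaneously clear the lemma's threshold $C_0(\alpha,e,2e-v)$ and leave density $\geq C$ on each piece after the factor-$1/4$ loss. The monotonicity of the denominator $\brm{v}^{2e-v}$ in the final step uses $e \geq v$ once more, making this hypothesis essential to the whole reduction.
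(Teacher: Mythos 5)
Your proposal is correct and follows essentially the same route as the paper: apply Lemma \ref{lem:decomposition} to a $2$-projection, pull the edge-disjoint almost-regular dense pieces back through the edge bijection afforded by linearity, apply the projection-restricted hypothesis to each piece, and sum. Your bookkeeping is in fact slightly cleaner than the paper's (you explicitly require $C'\geq C_0(\alpha,e,2e-v)$ and use $\brm{v}(P)\leq \brm{v}(G)$ in the correct direction for the final constant), so there is nothing to fix.
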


\begin{proof}  Let $s=e, t=2e-v$. By our assumption, $t\geq s\geq 1$.
We show that  the theorem holds for
$q$ to be chosen as $q_{\ref{lem:decomposition}}(\alpha,s,t)$, derived from Lemma~\ref{lem:decomposition} applied with constant $\alpha,s$ and $t$. Finally let $C'= 4C$, $c'= \frac{c r^{2e-v}}{2^{4e-v}}$. Let $G$ be an $r$-partite $r$-graph on $n$ vertices such that $\brm{e}(G)\geq 4C n^{1+\alpha}$.  Suppose $G$ has an $r$-partition $(A_1,A_2,\dots, A_r)$, such that $|A_1|\geq |A_2|\geq\dots |A_r|$. It follows that $|A_1\cup A_2|\geq 2n/r$, and $H=P_{1,2}(G)$ is a $(4C,1+\alpha)$-dense graph.
By Lemma \ref{lem:decomposition}, there exists a collection of 
edge-disjoint subgraphs $H_1,\dots, H_m$ of $H$, each of which is $(C, 1+\alpha)$-dense
and $q$-almost-regular, such that 
\[\sum_{i=1}^m f(H_i,s, t )\geq \frac{1}{4^s} f(H,s,t).\]
For each $i\in [m]$, let $G_i$ be the subgraph of $G$ such that $P_{1,2}(G_i)=H_i$.
For each $i\in [m]$, since $H_i$ is $(C,1+\alpha)$-dense and $q$-almost-regular, by the hypothesis of the theorem, $(G_i,H_i)$ $c$-supersaturates $F^{(r)}$.
That is,
\[t_{F^{(r)}}(G_i)\geq c\frac{(e(H_i))^s}{(v(H_i))^t}= c f(H_i,s,t).\]
Since the $H_i$'s are edge-disjoint, $G_i'$'s are also edge-disjoint (i.e. there is no edge contained in two different $G_i$'s). Thus, we have

\[t_{F^{(r)}}(G)\geq \sum_{i=1}^m t_F(G_i)\geq c\sum_{i=1}^m f(H_i)\geq \frac{c}{4^s} f(H)=
\frac{c}{4^e} \cdot \frac{(\brm{e}(H))^e}{(\brm{v}(H))^{2e-v}} \geq \frac{c r^{2e-v}}{2^{4e-v}}\cdot \frac{(\brm{e}(G))^e}{n^{2e-v}}.\]
\end{proof}

Theorem \ref{thm:reduction} says that to establish supersaturation of $F^{(r)}$ in an $n$-vertex linear $r$-partite $r$-graph $G$, we may assume $G$ has a dense enough almost-regular $2$-projection $P$. Our next lemma can be used to show that we may further assume $P$ to have edge density exactly
$\Theta(\brm{v}(P)^{1+\alpha})$, where $\alpha$  is any fixed real for which $ex(n,F^{(r)}) = O(n^{1+\alpha})$. The proof uses random sampling and the classic Chernoff bound, which we state here for completeness.
\begin{lemma} { (f.e. \cite{JLR} Corollary 2.3)} \label{lem:chernoff}
Given a binomially distributed variable  $X\in \brm{BIN}(n,p)$ we have $\mP(|X-E(X)|\geq a E(X))\leq 2e^{-\frac{a^2}{3}E(X)}$, as long as $0<a\leq 3/2$.
\end{lemma}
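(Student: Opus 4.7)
The plan is to use the classical Chernoff/Cram\'er exponential moment method and finish with a short analytic verification of the constant $3$ in the denominator. Writing $X=\sum_{i=1}^n X_i$ as a sum of independent Bernoulli$(p)$ variables and setting $\mu=\mE(X)=np$, independence together with the elementary inequality $1+x\le e^x$ gives, for every real $t$,
$$\mE(e^{tX})=(1-p+pe^t)^n\le e^{\mu(e^t-1)}.$$

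For the upper tail, I would apply Markov's inequality to $e^{tX}$ with the optimal choice $t=\ln(1+a)>0$, which after simplification yields
$$\mP\bigl(X\ge(1+a)\mu\bigr)\le \exp\bigl(-\mu\,\varphi_+(a)\bigr),\qquad \varphi_+(a):=(1+a)\ln(1+a)-a.$$
Symmetrically, for the lower tail I would take $t=\ln(1-a)<0$ (valid since $0<a<1$) to obtain
$$\mP\bigl(X\le(1-a)\mu\bigr)\le \exp\bigl(-\mu\,\varphi_-(a)\bigr),\qquad \varphi_-(a):=(1-a)\ln(1-a)+a.$$
Summing the two tail bounds gives the claimed two-sided inequality, up to the factor of $2$ already present in the statement.

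What remains is the purely analytic claim $\varphi_\pm(a)\ge a^2/3$ over the relevant ranges. The lower-tail version is immediate from the Taylor expansion $\varphi_-(a)=\sum_{k\ge 2}a^k/(k(k-1))\ge a^2/2$. The upper-tail version is the one delicate point and would be the main obstacle: defining $g(a):=\varphi_+(a)-a^2/3$, one has $g(0)=g'(0)=0$ and $g''(a)=\frac{1}{1+a}-\frac{2}{3}$, so $g$ is convex on $[0,1/2]$ and concave on $[1/2,3/2]$. Consequently $g$ increases from $0$ through $a=1/2$, continues to increase until $g'$ crosses zero, then decreases on the remaining interval; the whole argument is sealed by checking the single numerical inequality $g(3/2)\ge 0$. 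Since the lemma is the classical Chernoff bound cited from \cite{JLR}, no new ideas are needed; the only care required is in pinning down the constant $3$ in the denominator across the full range $a\in(0,3/2]$, which is precisely the role of the endpoint check above.
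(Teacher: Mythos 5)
Your proof is correct. The paper does not prove this lemma at all --- it simply cites it as Corollary 2.3 of Janson--\L uczak--Ruci\'nski --- so there is no argument of the authors' to compare against; your exponential-moment derivation with the rate functions $\varphi_\pm$ and the elementary verification that $\varphi_+(a)\ge a^2/3$ on $[0,3/2]$ (convex--concave analysis of $g$ plus the endpoint check $g(3/2)=\tfrac{5}{2}\ln\tfrac{5}{2}-\tfrac{9}{4}>0$) is a standard and complete self-contained proof, and it correctly identifies that the constraint $a\le 3/2$ is exactly what makes the constant $3$ work in the upper tail. The only cosmetic omission is the lower tail for $1\le a\le 3/2$: your substitution $t=\ln(1-a)$ requires $a<1$, but for $a>1$ the event $X\le(1-a)\mu$ is empty (as $X\ge 0$), and for $a=1$ one has $\mP(X=0)=(1-p)^n\le e^{-\mu}\le e^{-\mu/3}$, so this range is trivially covered; a one-line remark to that effect would make the write-up airtight.
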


\begin{lemma} \label{lem:random-sample} Let $r\geq 2$ be an integer. 
 Let $\alpha\in (0,1)$ be a real. Let $F$ be a graph with $v$ vertices and $e$ edges, where $e\geq v$.
There is a constant $m_0=m_0(\alpha)$ such that the following holds for all $M\geq m_0$.
Suppose $D,q,c>0$ are reals where $q\geq 1$ such that
for every linear $r$-partite $r$-graph $G'$ that has  a $2$-projection $P'$ on $m\geq M$ vertices satisfying
$Dm^{\alpha} \leq \delta(P')\leq \Delta(P')\leq  3qDm^{\alpha}$
we have that $(G',P')$ $c$-supersaturates $F^{(r)}$. Then  for every
linear $r$-partite $r$-graph $G$ that has  a $(qD,1+\alpha)$-dense and $q$-almost-regular $2$-projection $P$ on at least $M$ vertices, 
$(G,P)$ $\frac{c}{2^{e+1}}$-supersaturates $F^{(r)}$. 
\end{lemma}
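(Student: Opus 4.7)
The plan is a randomized sparsification of $G$ (equivalently, of its $2$-projection $P$, via the edge bijection afforded by linearity of $G$) followed by application of the hypothesis to the sparsified pair. Writing $m := \brm{v}(P)$, the $(qD, 1+\alpha)$-density of $P$ forces its average degree to be at least $2qDm^\alpha$, which combined with $q$-almost-regularity gives $\delta(P) \ge 2Dm^\alpha$. Set $p := 2Dm^\alpha/\delta(P) \in (0,1]$, obtain $G^* \subseteq G$ by retaining each edge independently with probability $p$, and write $P^* := P_{1,2}(G^*)$.

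Define the good event $\mathcal{E}$ as the conjunction of (a) $\tfrac12 p\, d_P(v) \le d_{P^*}(v) \le \tfrac32 p\, d_P(v)$ for every vertex $v$, and (b) $\brm{e}(P^*) \ge \tfrac12 p\, \brm{e}(P)$. I apply Lemma \ref{lem:chernoff} with $a = 1/2$ to each of the $m$ degrees and to the edge count. Each Chernoff exponent is $\Omega(p\delta(P)) = \Omega(Dm^\alpha)$, and a union bound over the $m+1$ events gives $\mP[\mathcal{E}] \ge 1/2$ as soon as $Dm^\alpha$ dominates $\log m$; this is where the threshold $m \ge M \ge m_0(\alpha)$ enters.

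On $\mathcal{E}$ the degrees of $P^*$ lie in $[p\delta(P)/2,\, 3p\Delta(P)/2] \subseteq [Dm^\alpha,\, 3qDm^\alpha]$ by the choice of $p$ and the inequality $\Delta(P) \le q\delta(P)$; moreover no vertex becomes isolated, so $\brm{v}(P^*) = m \ge M$. Since $G^*$ is a linear $r$-partite $r$-graph with $2$-projection $P^*$, the hypothesis applies and gives, on $\mathcal{E}$,
\[
t_{F^{(r)}}(G^*) \ge c\,\frac{\brm{e}(P^*)^e}{\brm{v}(P^*)^{2e-v}} \ge c\left(\frac{p}{2}\right)^e \frac{\brm{e}(P)^e}{m^{2e-v}}.
\]

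Finally I take expectations. By linearity $\mE[t_{F^{(r)}}(G^*)] = p^e\, t_{F^{(r)}}(G)$, since a fixed copy of $F^{(r)}$ survives iff each of its $e$ edges does. Lower bounding via $\mathcal{E}$,
\[
\mE[t_{F^{(r)}}(G^*)] \ge \mP[\mathcal{E}] \cdot c\left(\frac{p}{2}\right)^e \frac{\brm{e}(P)^e}{m^{2e-v}} \ge \frac{c\, p^e}{2^{e+1}} \cdot \frac{\brm{e}(P)^e}{m^{2e-v}},
\]
and cancelling $p^e$ from both sides yields the desired bound $t_{F^{(r)}}(G) \ge (c/2^{e+1})\,\brm{e}(P)^e/\brm{v}(P)^{2e-v}$. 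The main obstacle is calibrating $p$ so that $\mathcal{E}$ simultaneously controls both degree endpoints and the edge count, while ensuring that the factor $p^e$ from linearity cancels cleanly against the $p^e$ arising from the hypothesis's degree-normalized bound; the Chernoff step itself is routine but dictates the dependence of $m_0$ on $\alpha$ (and, if one tracks it, on $D$) through the requirement that $Dm^\alpha$ beats the $\log m$ from the union bound.
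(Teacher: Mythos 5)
Your proposal is correct and follows essentially the same random-sparsification argument as the paper: the same choice of $p=2Dm^{\alpha}/\delta(P)$, the same Chernoff-plus-union-bound control of the degrees and the edge count of the sampled projection, and the same application of the hypothesis to the sampled pair. The only cosmetic difference is in transferring the count back: you use $\mE[t_{F^{(r)}}(G^*)]=p^e\,t_{F^{(r)}}(G)$ together with a lower bound on the expectation restricted to the good event, whereas the paper inserts a Markov-inequality step to fix a single subgraph $G'$ with $t_{F^{(r)}}(G')\le 2p^e\,t_{F^{(r)}}(G)$; both routes give the same constant $c/2^{e+1}$.
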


\begin{proof} The choice of $m_0$ will be given implicitly in the proof. Let $G$ be a linear $r$-partite graph with an $r$-partition 
$(A_1,A_2, \dots, A_r)$, where without loss of  generality $P=P_{1,2}(G)$ is $(qD,1+\alpha)$-dense, $q$-almost-regular, and has $m=\brm{v}(P)\geq M$ vertices. 
The idea is to sample randomly a subgraph $G'$ of $G$ with an appropriate edge probability,
count $F^{(r)}$ in $G'$, and then use it to bound the count of $F^{(r)}$ in $G$.
Now let $\delta(P)$, $\Delta(P)$ and $d$ denote the minimum,maximum and average degrees in $P$, respectively. 
Then $d=\frac{2 \brm{e}(P)}{\brm{v}(P)}\geq \frac{2qDm^{1+\alpha}}{m}=2qD m^{\alpha}$. 
Since $P$ is $q$-almost-regular, we have $\delta(P)\geq d/q \geq 2Dm^{\alpha}$.  

Let $p=\frac{2Dm^{\alpha}}{\delta(P)}$ and let $G'$ be a random subgraph of $G$, obtained by including each edge of $G$
in $G'$ independently with probability $p$. 
 Then $$\mE[\brm{e}(G')] = p \brm{e}(G) \textit{ and } \mE[t_{F^{(r)}}(G')]=t_{F^{(r)}}(G)\cdot p^{e} \textit{ and }\forall v\in V(G') \, \mE[d_{G'}(v)] = pd_G(v).$$
Since $G$ is linear and $P$ is a $2$-projection, for each $v\in V(P)=A_1\cup A_2$ we have $d_G(v)=d_P(v)$.
Since $P$ is $q$-almost-regular, $\Delta(P)/\delta(P)\leq q$.  So, for each $v\in A_1\cup A_2$, we have 
\[\mE[d_{G'}(v)] = pd_G(v) =pd_P(v)\leq \frac{2Dm^{\alpha}}{\delta(P)} \Delta(P) \leq 2qDm^{\alpha},\]
and similarly $\mE[d_{G'}(v)]  \geq 2Dm^{\alpha}$. 

Now  random variables $d_{G'}(v)$ and $\brm{e}(G')$ have binomial distributions. Hence using Markov's inequality and Chernoff's inequality, one can show that
$$\mP[t_{F^{(r)}}(G')>2t_{F^{(r)}}(G)\cdot p^{e}]<\frac{1}{2}, $$
$$\mP[\brm{e}(G')<\frac{p}{2} \brm{e}(G)]<\frac{1}{4}$$
and $$\mP[\exists v \in A_1\cup A_2  \textit{ such that } d_{G'}(v)<Dm^{\alpha} \textit{ or } d_{G'}(v)>3q D m^{\alpha}]<1/4.$$
In some of the inequalities above, we used Chernoff (and the union bound for the last one). The desired inequalities 
hold when $m$ is large enough as a function of $\alpha$ which is guaranteed by choosing $m_0$ to be large enough.
So there exists a subgraph $G'$ of $G$ satisfying 
\begin{equation} \label{count-transfer}
\brm{e}(G')\geq \frac{p}{2}\brm{e}(G), t_{F^{(r)}}(G')\leq 2t_{F^{(r)}}(G)\cdot p^{e}
\end{equation} 
and  that for each $v\in A_1\cup A_2$
\begin{equation} \label{A12-degree}
Dm^{\alpha}\leq d_{G'}(v) \leq 3q D m^{\alpha}.
\end{equation}

Now, let $P'=P_{1,2}(G')$. Since there is a bijection between $E(G')$ and $E(P')$, for each $v\in V(P)$,
we have $d_{P'}(v)=d_{G'}(v)$. By \eqref{A12-degree}, for each $v\in V(P')=A_1\cup A_2$, we have
\[ Dm^{\alpha} \leq d_{P'}(v) \leq 3qDm^{\alpha}.\] 
 Thus, by the hypothesis of our theorem, $(G', P')$ $c'$-supersaturates $F^{(r)}$. By \eqref{count-transfer},
 we have 
\[t_{F^{(r)}}(G) \geq \frac{1}{2p^e}t_{F^{(r)}}(G')  \geq \frac{c}{2p^e} \cdot \frac{(\brm{e}(G'))^e}{m^{2e-v}} \geq  \frac{c}{2p^e} \cdot \frac{p^e(\brm{e}(G))^e}{2^e m^{2e-v}}  =   
\frac{c}{2^{e+1}}\cdot \frac{(\brm{e}(G))^e}{m^{2e-v}}. \]
\end{proof}

Applying Theorem~\ref{thm:reduction} an Lemma~\ref{lem:random-sample} we obtain the following reduction tool for proving supersaturation of expansions.
In this paper, we use it on $C^{(r)}_{2k}$. But it can be applied to other expansions as well and thus is of independent interest.

\begin{corollary}\label{cor:reduction} 
Let $r\geq 2$ be an integer and $\alpha\in (0,1)$ a real. 
Let $F$ be a graph with $v$ vertices and $e$ edges, where $e\geq v$.
Let  $q=q_{\ref{thm:reduction} }(\alpha,F)$ be given as in Theorem \ref{thm:reduction} and $m_0=m_0(\alpha)$ be given
as in Lemma \ref{lem:random-sample}.
Suppose there exist reals $D,\lambda,M, c>0$, where $\lambda\geq 3q, M\geq m_0$,
such that
for every linear $r$-partite $r$-graph $G$ that has a $2$-projection $P$ on $m\geq M$ vertices
satisfying $Dm^{\alpha} \leq \delta(P)\leq \Delta(P)\leq \lambda Dm^{\alpha}$
we have that $(G,P)$ $c$-supersaturates $F^{(r)}$. Then  there exist $C', c'$ such that  every $(C', 1+\alpha)$-dense linear $r$-graph $G$ $c'$-supersaturates $F^{(r)}$.
\end{corollary}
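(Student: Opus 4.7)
The plan is to chain the two reduction tools already established --- Lemma \ref{lem:random-sample} and Theorem \ref{thm:reduction} --- and then bridge from linear $r$-partite $r$-graphs to arbitrary linear $r$-graphs via a standard random $r$-partition argument.

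First, with $q = q_{\ref{thm:reduction}}(\alpha, F)$ fixed as in the statement of the corollary, I would feed the corollary's hypothesis into Lemma \ref{lem:random-sample}. That lemma requires supersaturation for linear $r$-partite host graphs whose $2$-projection $P$ has all degrees in the window $[Dm^\alpha, 3qDm^\alpha]$; since $\lambda \geq 3q$, this narrower window is contained in the wider window $[Dm^\alpha, \lambda Dm^\alpha]$ already covered by the corollary's hypothesis, so the lemma applies and produces a constant $c_0 = c/2^{e+1}$ such that every linear $r$-partite $r$-graph whose $2$-projection is $(qD, 1+\alpha)$-dense and $q$-almost-regular $c_0$-supersaturates $F^{(r)}$.

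Next I would plug this into Theorem \ref{thm:reduction} (with the same $q$ and $C = qD$), obtaining constants $C_1, c_1$ such that every $(C_1, 1+\alpha)$-dense linear $r$-partite $r$-graph $c_1$-supersaturates $F^{(r)}$ in the sense of Definition \ref{def:supersat}. This settles the $r$-partite case. To extend to arbitrary linear $r$-graphs, given a $(C', 1+\alpha)$-dense linear $r$-graph $G$ on $n$ vertices I would color each vertex with one of $r$ colors uniformly and independently and retain only the rainbow edges: an edge survives with probability $r!/r^r$, so some coloring yields a linear $r$-partite subgraph $G^\ast \subseteq G$ with $\brm{e}(G^\ast) \geq (r!/r^r)\,\brm{e}(G)$ and $\brm{v}(G^\ast) \leq n$. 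Choosing $C' = C_1 r^r/r!$ forces $G^\ast$ to be $(C_1, 1+\alpha)$-dense, so the previous step yields $t_{F^{(r)}}(G) \geq t_{F^{(r)}}(G^\ast) \geq c_1 \brm{e}(G^\ast)^e / \brm{v}(G^\ast)^{2e-v}$, which gives the desired supersaturation of $G$ with constant $c' = c_1(r!/r^r)^e$.

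There is no serious conceptual obstacle --- the argument is a mechanical composition of two prior reductions, and the main care required is parameter bookkeeping. The condition $\lambda \geq 3q$ is precisely what allows the corollary's hypothesis to imply the hypothesis of Lemma \ref{lem:random-sample}, while $M \geq m_0$ ensures the Chernoff-type concentration inside that lemma is valid. Everything else reduces to tracking multiplicative constants through the three stages.
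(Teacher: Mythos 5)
Your proposal is correct and follows the paper's proof essentially verbatim: apply Lemma \ref{lem:random-sample} (whose hypothesis is implied by the corollary's since $\lambda\geq 3q$ makes the window $[Dm^{\alpha},3qDm^{\alpha}]$ a subset of $[Dm^{\alpha},\lambda Dm^{\alpha}]$), feed its conclusion into Theorem \ref{thm:reduction}, and pass from arbitrary linear $r$-graphs to $r$-partite ones by a random $r$-partition that retains a $r!/r^{r}$ fraction of the edges. The one bookkeeping point to fix is that you should invoke Theorem \ref{thm:reduction} with $C=\max\{qD,M\}$ rather than $C=qD$: the conclusion of Lemma \ref{lem:random-sample} only covers projections on at least $M$ vertices, so you need the density threshold itself to force $\brm{v}(P)\geq M$ (which $\brm{e}(P)\geq C\brm{v}(P)^{1+\alpha}$ together with $\brm{e}(P)<\brm{v}(P)^{2}$ does once $C\geq M$); this is exactly what the paper does, and apart from that your constant tracking in the final step is in fact slightly more careful than the paper's.
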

\begin{proof}
Suppose there exist reals $D,\lambda>0$, where $\lambda\geq 3q$,
such that
for every linear $r$-partite $r$-graph $G$ that has a $2$-projection $P$ on $m\geq M$ vertices
satisfying $Dm^{\alpha} \leq \delta(P)\leq \Delta(P)\leq \lambda Dm^{\alpha}$
 we have that $(G,P)$ $c$-supersaturates $F^{(r)}$.  By Lemma \ref{lem:random-sample},
there exists a constant $c_{\ref{lem:random-sample}}$ such that for every linear $r$-partite $r$-graph $G$
that has a $(qD,1+\alpha)$-dense $q$-almost-regular $2$-projection $P$ on at least $M$ vertices
$(G,P)$  $c_{\ref{lem:random-sample}}$-supersaturates $F^{(r)}$.
Set $C=\max\{qD, M\}$.  Then for every linear $r$-partite $r$-graph $G$ that has a $(C,1+\alpha)$-dense
$q$-almost-regular $2$-projection $P$ we have that $(G,P)$  $c_{\ref{lem:random-sample}}$-supersaturates $F^{(r)}$
(as $P$ must have at least $C\geq M$ vertices).
Applying Theorem \ref{thm:reduction} with the $C$ above and $c=c_{\ref{lem:random-sample}}$,
there exist constant $C'_{\ref{thm:reduction}}$ and $c'_{\ref{thm:reduction}}$ such that every linear
$r$-partite $r$-graph that is $(C'_{\ref{thm:reduction}},1+\alpha)$-dense $c'_{\ref{thm:reduction}}$-supersaturates
$F^{(r)}$.

Let $C'=\frac{r^r}{r!} C'_{\ref{thm:reduction}}$.
Let $G$ be a linear $(C',1+\alpha)$-dense $r$-graph. By a well-known fact, $G$ contains a subgraph $G'$
with $\brm{e}(G')\geq \frac{r!}{r^r} \brm{e}(G)$. Clearly, $G'$ is $(C'_{\ref{thm:reduction}},1+\alpha)$-dense. 
By our discussion above, $G'$ $c'_{\ref{thm:reduction}}$-supersaturates $F^{(r)}$. Hence
$G$ $c'_{\ref{thm:reduction}}$-supersaturates $F^{(r)}$. 
\end{proof}

Applying Corollary \ref{cor:reduction} to even linear cycles, we get

\begin{corollary}\label{cor:linear-cycle-reduction}
Let $r,k\geq 2$ be integers. There exist constants $m_k, q_k$ depending only on $k$ such that
the following holds. Suppose there are reals $D, \lambda,M,c>0$, where $\lambda\geq q_k D$
and $M\geq m_k$, such that
for every $n$-vertex linear $r$-partite $r$-graph $G$ that has a $2$-projection $P$ on $m\geq M$ vertices
satisfying $Dm^{1/k}\leq \delta(P)\leq \Delta (P)\leq \lambda D m^{1/k}$ we have $t_{C^{(2k)}}(G)\geq cm^2$.
Then there exist constants $C',c'$ such that every $n$-vertex linear $r$-graph $G$ with $\brm{e}(G)\geq C'n^{1+1/k}$
satisfies $t_{C^{(r)}_{2k}}(G)\geq c'(\frac{\brm{e}(G)}{n})^{2k}$. \qed
\end{corollary}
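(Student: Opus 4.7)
The plan is to deduce this corollary directly from Corollary~\ref{cor:reduction} applied with $F = C_{2k}$ and $\alpha = 1/k$. For that choice $v = \brm{v}(C_{2k}) = 2k$ and $e = \brm{e}(C_{2k}) = 2k$, so the hypothesis $e \geq v$ of Corollary~\ref{cor:reduction} holds and $2e-v = 2k$. I would set $m_k := m_0(1/k)$ and $q_k := 3\,q_{\ref{thm:reduction}}(1/k, C_{2k})$, where $m_0$ comes from Lemma~\ref{lem:random-sample} and $q_{\ref{thm:reduction}}$ from Theorem~\ref{thm:reduction}; these are manifestly functions of $k$ alone.

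The next step is to translate the count $t_{C^{(r)}_{2k}}(G) \geq cm^2$ appearing in the hypothesis of Corollary~\ref{cor:linear-cycle-reduction} into the projection-restricted supersaturation language required by Corollary~\ref{cor:reduction}. The upper degree bound $\Delta(P) \leq \lambda D m^{1/k}$ gives
\[
\brm{e}(P) \;\leq\; \tfrac{1}{2}\lambda D\, m^{1+1/k},
\qquad\text{so}\qquad
\frac{(\brm{e}(P))^{2k}}{m^{2k}} \;\leq\; \bigl(\tfrac{\lambda D}{2}\bigr)^{2k} m^{2}.
\]
Combining this with the assumed count yields $t_{C^{(r)}_{2k}}(G) \geq \tilde c\, (\brm{e}(P))^{2k}/m^{2k}$ with $\tilde c := c\bigl(2/(\lambda D)\bigr)^{2k}$, which is a constant (depending only on $c,\lambda,D,k$). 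Thus $(G,P)$ $\tilde c$-supersaturates $C^{(r)}_{2k}$ in the sense of Definition~\ref{def:proj-supersat}.

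The hypothesis of Corollary~\ref{cor:reduction} (with $\alpha = 1/k$, $F = C_{2k}$, and supersaturation constant $\tilde c$) is now satisfied, so it produces constants $C', c'$ such that every $(C', 1+1/k)$-dense linear $r$-graph $G$ $c'$-supersaturates $C^{(r)}_{2k}$. Unpacking Definition~\ref{def:supersat} with $v = e = 2k$ gives $t_{C^{(r)}_{2k}}(G) \geq c'\,(\brm{e}(G))^{2k}/n^{2k} = c'\,\bigl(\brm{e}(G)/n\bigr)^{2k}$, which is precisely the desired conclusion, and $\brm{e}(G) \geq C' n^{1+1/k}$ is the $(C',1+1/k)$-density hypothesis restated.

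There is no genuine obstacle: the corollary is a specialization of the reduction already proved in Corollary~\ref{cor:reduction}. The only care needed is the dimensional bookkeeping, namely recognizing that the hypothesis of Corollary~\ref{cor:linear-cycle-reduction} phrases the count as $cm^2$ (having absorbed the $D,\lambda$ factors into $c$), while Corollary~\ref{cor:reduction} is stated in terms of supersaturation relative to $(\brm{e}(P))^{e}/(\brm{v}(P))^{2e-v}$; the upper degree bound on $P$ converts one language into the other cleanly.
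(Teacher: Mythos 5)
Your proposal is correct and is exactly the paper's (unwritten) argument: the paper states this corollary with only a \qed, as an immediate specialization of Corollary~\ref{cor:reduction} to $F=C_{2k}$, $\alpha=1/k$, and your translation of the hypothesis $t_{C^{(r)}_{2k}}(G)\geq cm^2$ into Definition~\ref{def:proj-supersat} via $\brm{e}(P)\leq\tfrac12\lambda Dm^{1+1/k}$ is the only bookkeeping needed. The identification $v=e=2k$, hence $2e-v=2k$, and the choices $m_k=m_0(1/k)$, $q_k=3q_{\ref{thm:reduction}}(1/k,C_{2k})$ match what the paper intends.
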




\section{Supersaturation of even cycles in graphs}\label{sec:graphs}


\subsection{Preliminary Lemmas}

In this short section, we prove a few lemmas. The first simple lemma
provides a new ingredient to the usual Faudree-Simonovits approach \cite{FS} which
enables more efficient counting of $C_{2k}$'s in the host graph and gives better constants.
On surface, applying it will appear to be essentially equivalent to using the Faudree-Simonovits
method directly. However, in the concluding remarks we will point out some subtle differences to illuminate
the advantage that comes with applying the lemma.

\begin{lemma}\label{balanced-root}
Let $T$ be a tree of height $h$ with a  root  $x$. For each $v\in V(T)$, let $T_v$ be the subtree
of $T$ rooted at $v$. Let $b$ be a positive integer. Let $S$ be a set of at least $bh+1$ vertices in $T$.
Then there exists a vertex $y$ at distance $i$ from $x$, for some $0\leq i\leq h-1$,
such that $|V(T_y)\cap S|\geq |S|-ib$ and that for any child $z$ of $y$ in $T$, $|V(T_z)\cap S|\leq |V(T_y)\cap S|-b$.
\end{lemma}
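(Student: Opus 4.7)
The plan is to locate $y$ by a greedy descent from the root $x$, advancing to a child whenever some child's subtree retains ``almost all'' of $S$ (in the sense of losing fewer than $b$ vertices), and halting the first time no such child exists. Specifically, set $y_0:=x$. At each step $i\geq 0$, given $y_i$ at distance $i$ from $x$, check whether some child $z$ of $y_i$ satisfies $|V(T_z)\cap S|> |V(T_{y_i})\cap S|-b$. If such a $z$ exists, pick one and set $y_{i+1}:=z$; otherwise terminate and output $y:=y_i$. The stopping rule then directly yields the required property that every child $z$ of $y$ satisfies $|V(T_z)\cap S|\leq|V(T_y)\cap S|-b$.

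The first thing I would verify is the density invariant $|V(T_{y_i})\cap S|\geq |S|-i(b-1)$, proved by induction on $i$. The base case $i=0$ is immediate since $V(T_{y_0})=V(T)\supseteq S$. For the inductive step, the descent rule and the integrality of $|V(T_z)\cap S|$ give $|V(T_{y_{i+1}})\cap S|\geq |V(T_{y_i})\cap S|-b+1\geq |S|-(i+1)(b-1)$. In particular, once the process halts at some level $i$, this gives $|V(T_y)\cap S|\geq |S|-i(b-1)\geq |S|-ib$, which is exactly the other conclusion required.

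The only remaining point to justify --- and the reason the strict inequality in the descent rule is important --- is that the descent must stop by level $h-1$. Otherwise, the process would reach a vertex $y_h$ at distance $h$ from $x$; since $T$ has height $h$, this $y_h$ is a leaf, so $V(T_{y_h})=\{y_h\}$ and hence $|V(T_{y_h})\cap S|\leq 1$. But the invariant gives
\[
|V(T_{y_h})\cap S|\;\geq\; |S|-h(b-1)\;\geq\;(bh+1)-h(b-1)\;=\;h+1\;\geq\;2
\]
(the case $h=0$ is degenerate and may be ignored), a contradiction. Hence the descent halts at some $i\leq h-1$, producing the desired $y$. I expect the main obstacle to be simply remembering to use the strict inequality when descending, since this is precisely what forces termination strictly above the leaves.
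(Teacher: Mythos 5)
Your proof is correct and is essentially the paper's argument: the paper follows the path of maximum-weight children down to a singleton and takes the first index where the count drops by at least $b$, which is exactly your greedy descent with its stopping rule, and both termination arguments rest on the same budget computation from $|S|\geq bh+1$. Your explicit invariant $|V(T_{y_i})\cap S|\geq |S|-i(b-1)$ and the leaf contradiction are a clean way to package what the paper does implicitly.
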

\begin{proof}
We define a sequence of vertices as follows. Let $x_0=x$. Among all the children of $x_0$, let $x_1$ be one 
such that $T_{x_1}$ contains the maximum number of vertices in $S$. Among all the children of $x_1$, let $x_2$ be one 
that contains the maximum number of vertices in $S$, and etc. Suppose the sequence we define this way is $x_0,x_1,\dots, x_p$, where $p\leq h$ and 
$|V(T_{x_p})\cap S|=1$.
Since $|V(T_{x_0})\cap S|\geq bh+1$ and $|V(T_{x_p})\cap S|=1$, there must exist a smallest index $0\leq i<p$ such that $|V(T_{x_{i+1}})\cap S|\leq 
|V(T_{x_i})\cap S|-b$.
Let $y=x_i$. Then $y$ satisfies the claim.
\end{proof}

\begin{lemma}\label{lem:highmindegree}
Given a bipartite graph $G$ with a bipartition $(A,B)$. There exists a subgraph $G'$ of $G$ with a bipartition $(A',B')$ where
$A'\subseteq A, B'\subseteq B$ such that $\brm{e}(G')\geq \frac{1}{2}\brm{e}(G)$ and that
$\delta_{A'}(G')\geq \frac{1}{4} d_A(G)$ and $\delta_{B'}(G')\geq \frac{1}{4} d_B(G)$.
\end{lemma}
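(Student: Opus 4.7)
The plan is to use a standard iterative low-degree deletion argument, where the deletion thresholds are set against the \emph{original} averages $d_A(G)=\brm{e}(G)/|A|$ and $d_B(G)=\brm{e}(G)/|B|$ rather than against updated averages after each round. Start with $G'=G$, $A'=A$, $B'=B$. While there exists a vertex $v \in A'$ with $d_{G'}(v) < \tfrac{1}{4} d_A(G)$, or a vertex $v \in B'$ with $d_{G'}(v) < \tfrac{1}{4} d_B(G)$, delete $v$ from the corresponding side (and remove its incident edges). The process terminates because the vertex set is finite, and upon termination the two minimum-degree conditions hold by definition.

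The only thing to check is the edge loss bound. Each deletion from $A$ destroys fewer than $\tfrac{1}{4} d_A(G)$ edges, and at most $|A|$ such deletions can occur, so the total loss attributable to $A$-side deletions is less than $|A| \cdot \tfrac{1}{4} d_A(G) = \tfrac{1}{4} \brm{e}(G)$. Symmetrically, $B$-side deletions cost less than $\tfrac{1}{4} \brm{e}(G)$. Summing gives $\brm{e}(G) - \brm{e}(G') < \tfrac{1}{2} \brm{e}(G)$, hence $\brm{e}(G') \geq \tfrac{1}{2} \brm{e}(G)$, as required.

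There is essentially no obstacle here; the crucial (and slightly subtle) point is only that the thresholds $\tfrac{1}{4} d_A(G)$ and $\tfrac{1}{4} d_B(G)$ are held fixed throughout the process. If one were instead to redefine the threshold relative to the current average at each step, the bookkeeping would become circular. Fixing them at the outset allows the edge-loss count to telescope trivially against $|A|$ and $|B|$.
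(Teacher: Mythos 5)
Your proof is correct and is essentially identical to the paper's: the same iterative deletion of vertices whose current degree drops below the fixed thresholds $\tfrac14 d_A(G)$ and $\tfrac14 d_B(G)$, with the same edge-loss bound $|A|\cdot\tfrac14 d_A(G)+|B|\cdot\tfrac14 d_B(G)\le\tfrac12\brm{e}(G)$. Nothing further to add.
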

\begin{proof}
Let $d_A=d_A(G)$ and $d_B=d_B(G)$.
Let us iteratively delete any vertex in $A$ whose degree becomes less than $\frac{1}{4}d_A$ and any vertex
in $B$ whose degree becomes less than $\frac{1}{4}d_B$. We continue until we no longer have such vertices
or run out of vertices. Let $G'$ denote the remaining graph. Let $A'$ denote the set of remaining vertices in $A$
and $B'$ the set of remaining vertices in $B$. The number of edges removed in the process is at most
\[\frac{1}{4}d_A|A|+\frac{1}{4} d_B|B|\leq \frac{1}{4} \brm{e}(G)+\frac{1}{4} \brm{e}(G)=\frac{\brm{e}(G)}{2}.\]
Hence, $G'$ is non-empty. By the procedure, each vertex in $A'$ has degree at least $\frac{1}{4} d_A$
and each vertex in $B'$ has degree at least $\frac{1}{4}d_B$ in $G'$. 
\end{proof}

We also need the following crude bound on the number of paths of a given length in an asymmetric bipartite graph. Even though sharper estimates exist in literature,
the lemma suffices for our purposes and is self-contained.

\begin{lemma} \label{lem:path-count}
Let $p$ be a positive integer.
Let $G$ be a bipartite graph with a bipartition $(A,B)$. Let $d_A, d_B$ denote the average degrees of vertices in $A$ and in $B$, respectively.
Suppose $d_A, d_B\geq 8p$. Then the number of paths of length $2p+1$ in $G$ is at least $\frac{1}{2^{6p+1}}\brm{e}(G)(d_Ad_B)^p$.
\end{lemma}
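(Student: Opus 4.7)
The plan is to pass to a subgraph with high minimum degree on both sides via Lemma~\ref{lem:highmindegree}, and then count ordered paths of length $2p+1$ in that subgraph greedily, starting from an edge and extending one vertex at a time.

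First I would apply Lemma~\ref{lem:highmindegree} to $G$ to obtain a subgraph $G'$ with bipartition $(A',B')$ satisfying $\brm{e}(G') \geq \tfrac{1}{2}\brm{e}(G)$, $\delta_{A'}(G') \geq \tfrac{1}{4}d_A$, and $\delta_{B'}(G') \geq \tfrac{1}{4}d_B$. Every path of length $2p+1$ in $G'$ is also one in $G$, so it suffices to count in $G'$. Since $2p+1$ is odd, the two endpoints of such a path lie in different parts; thus counting ordered paths $v_0 v_1 \dots v_{2p+1}$ with $v_0 \in A'$ (so $v_{2i}\in A'$ and $v_{2i+1}\in B'$ for all $i$) counts each unordered path exactly once.

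To build such an ordered path greedily, I would first choose the initial edge $v_0 v_1$ in $\brm{e}(G')$ ways. Then, having chosen $v_0,\dots,v_{2i-1}$ for some $i\in\{1,\dots,p\}$, pick $v_{2i}$ from among the neighbors of $v_{2i-1}$ in $A'$ that avoid the $i$ already-used $A'$-vertices $v_0, v_2,\dots, v_{2i-2}$; the number of choices is at least
\[
\deg_{G'}(v_{2i-1}) - i \;\geq\; \frac{d_B}{4} - p \;\geq\; \frac{d_B}{8},
\]
where the last inequality uses $d_B \geq 8p$. Analogously, for each $i\in\{1,\dots,p\}$ the number of choices for $v_{2i+1}$ is at least $\tfrac{d_A}{4} - p \geq \tfrac{d_A}{8}$. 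Multiplying over the $p$ even-indexed and $p$ odd-indexed extensions gives
\[
\brm{e}(G') \cdot \left(\frac{d_B}{8}\right)^{p} \left(\frac{d_A}{8}\right)^{p} \;\geq\; \frac{\brm{e}(G)}{2} \cdot \frac{(d_A d_B)^p}{2^{6p}} \;=\; \frac{\brm{e}(G)(d_A d_B)^p}{2^{6p+1}}.
\]

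There is no real obstacle here; the proof is essentially bookkeeping. The one numerical point to be careful about is that at each extension step one avoids at most $p$ previously-placed vertices on the relevant side, and the hypothesis $d_A, d_B \geq 8p$ is chosen precisely so that $\tfrac{d}{4} - p \geq \tfrac{d}{8}$ in both cases, producing the clean constant $2^{6p+1}$ in the final bound.
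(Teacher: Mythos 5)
Your proof is correct and follows essentially the same route as the paper: pass to the high-minimum-degree subgraph via Lemma~\ref{lem:highmindegree}, then greedily extend from an initial edge, using $d_A,d_B\geq 8p$ to ensure at least $d_A/8$ (resp.\ $d_B/8$) choices at each step. Your handling of the ordered-versus-unordered count and the exact number of forbidden vertices at each step is, if anything, slightly more careful than the paper's own write-up.
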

\begin{proof}
By Lemma \ref{lem:highmindegree}, $G$ contains a subgraph $G'$ with a bipartition $(A',B')$ such that $\brm{e}(G')\geq \frac{1}{2} \brm{e}(G)$
and that $d_{A'}\geq \frac{1}{4} d_A, d_{B'}\geq \frac{1}{4} d_B$. Considering growing a $(2p-1)$-path $v_1v_2\dots v_{2p-1}$ where
$v_1\in A, v_{2p-1}\in B$. There are $\brm{e}(G')$ ways to pick $v_1v_2$. Then there are at least $\prod_{i=1}^p (d_{B'}-i)(d_{A'}-i)$ ways
to pick the remaining vertices one by one. Since $d_{A'}\geq \frac{1}{4} d_A\geq 2p$ and $d_{B'}\geq\frac{1}{4}d_B\geq 2p$,
we have $d_{A'}-i\geq \frac{1}{2} d_{A'}\geq \frac{1}{8} d_A$ and $d_{B'}\geq \frac{1}{2}d_{B'}\geq \frac{1}{8} d_B$ for all $i\in [p]$.
Hence the number of $(2p+1)$-paths in $G$ is at least $\frac{1}{2} \brm{e}(G)\cdot \frac{1}{2^{6p}} (d_Ad_B)^p=\frac{1}{2^{6p+1}}\brm{e}(G)(d_Ad_B)^p$.
\end{proof}


\subsection{Proof of Theorem~\ref{thm:main} for $r=2$}

In this section, we reprove the supersaturation result for even cycles in graphs.
That is, we prove the $r=2$ case of Theorem \ref{thm:main}. Here is an outline of the proof.
First we develop a lemma that says if the leaves of a rooted tree $T$ of height at most $k$ are attached
to one partite set of a bipartite graph with average degrees on each side being a large enough constant
depending on $k$, then we get many $C_{2k}$'s that contain vertices in some particular level of $T$.
The key ingredient to proving this lemma is Lemma \ref{balanced-root}.
We then apply this lemma to show that every $n$-vertex almost regular graph $G$ with $\brm{e}(G)=\Theta(n^{1+1/k})$
(with adequate bounds on the coefficients)
has $\Omega(n^2)$ many $C_{2k}$'s. By Corollary~\ref{cor:linear-cycle-reduction} this proves the $r=2$ case of Theorem \ref{thm:main}.

\begin{lemma} \label{lem:c2k-count}
Let $h,k$ be positive integers where $h\leq k$.
Let $G$ be a graph.
Let $T$ be a tree of height $h$ in $G$ with a root $x$. For each $i\in [h]$ let $L_i$ be the set of
vertices at distance $i$ from $x$ in $T$.  Let $W$ be a set of vertices in $V(G)\setminus V(T)$.
Let $F$ denote the bipartite subgraph of $G$ containing all the edges of $G$ between $L_h$ and $W$.
Let $d_L, d_W$ be the average degree in $F$ of vertices in $L_h$ and in $W$, respectively. 
Suppose $d_L, d_W\geq 16k^2$. Then there exists $j\in [h]$ such that the number of $C_{2k}$'s in $G$ that contain
some vertex in $L_j$ is at least $\alpha_k |L_h| d_L^{k-h+j}d_W^{k-h+j-1}$, where $\alpha_k=\frac{1}{2^{6k}}(\frac{1}{2k})^{2k-2}$.
\end{lemma}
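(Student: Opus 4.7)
My plan is to construct $C_{2k}$'s of the following shape: pick a vertex $y$ at some level $j \in \{0,\dots,h-1\}$ of $T$, then combine the tree paths $y \to \ell_1$ and $y \to \ell_2$ (each of length $h-j$) to two leaves $\ell_1, \ell_2 \in T_y \cap L_h$ in distinct child-subtrees of $y$, with a simple $F$-path of length $2p := 2(k-h+j)$ joining $\ell_1$ to $\ell_2$. Every such configuration yields a $C_{2k}$ containing $y \in L_j$, with at most a factor-$2$ over-counting for the two traversal orientations of the cycle.

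To locate $y$, I would apply Lemma~\ref{balanced-root} to $T$ with $S = L_h$ and $b = \lceil |L_h|/(2k) \rceil$; the tiny case $|L_h| < 2k$ will be handled separately by taking $j = h$ and counting $C_{2k}$'s that lie entirely inside $F$ via a direct application of the technique underlying Lemma~\ref{lem:path-count}. The lemma yields $y$ at some distance $j$ from $x$ with $M := |T_y \cap L_h| \ge |L_h|/2$ and $|T_z \cap L_h| \le M - b$ for every child $z$ of $y$, so that the number of ordered pairs $(\ell_1,\ell_2) \in (T_y \cap L_h)^2$ with $\ell_1, \ell_2$ in distinct child-subtrees of $y$ is at least $Mb$. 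To count the $F$-paths between such good pairs, I would first pass via Lemma~\ref{lem:highmindegree} to a subgraph $F^* \subseteq F$ of minimum degrees $d_L/4$ and $d_W/4$ on its two sides, inside of which the greedy walk-growth argument underlying Lemma~\ref{lem:path-count} produces many simple $F^*$-walks of length $2p$; the hypothesis $d_L, d_W \ge 16k^2 \gg 2p$ ensures that self-intersections are a lower-order correction.

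The final step is to combine these two counts by a double counting / averaging argument, lower-bounding the number of good configurations $(\ell_1,\ell_2,\text{$F^*$-path of length } 2p)$ by a constant times $|L_h|\, d_L^p\, d_W^{p-1}$. The constant $\alpha_k = 2^{-6k}(2k)^{-(2k-2)}$ should absorb the losses from the min-degree reduction, the factor $2$ over-counting by direction, and the ``balance ratio'' $b/M \ge 1/(2k)$ applied on each step where the endpoints of the walk are constrained to different subtrees. I expect this last step to be the main obstacle: a priori, $F^*$-walks emanating from an arbitrary leaf of $T_y$ need not terminate inside $T_y$, let alone in a different child-subtree of $y$, so one must carefully exploit both the size $|T_y \cap L_h| \ge |L_h|/2$ and the subtree balance $|T_z \cap L_h| \le M - b$ to guarantee that a sufficient fraction of the walks land where they are needed. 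The numerical bookkeeping to match the resulting constant to $\alpha_k$ is where the condition $d_L, d_W \ge 16k^2$ should play its decisive role.
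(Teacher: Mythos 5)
Your overall architecture (locate a branch vertex $y$ at level $j$, count $F$-paths of length $2(k-h+j)$ between leaves in distinct child-subtrees of $y$, close the cycle through $y$ via the two tree paths) matches the paper's, and your path-counting arithmetic is consistent with the target bound. But there is a genuine gap at exactly the point you flag as ``the main obstacle,'' and the mechanism you propose cannot close it. Applying Lemma~\ref{balanced-root} once, globally, with $S=L_h$ and $b=\lceil|L_h|/(2k)\rceil$ selects $y$ using only the distribution of the leaves in $T$; it carries no information about where the edges of $F$ go. Consider $h=2$ with root $x$, children $c_1,\dots,c_s$ each carrying equally many leaves, and $F=\bigsqcup_i F_i$ where $F_i$ is a dense bipartite graph between the leaves under $c_i$ and a private set $W_i$. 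Your lemma application returns $y=x$, yet every $F$-walk starting under some $c_i$ stays under $c_i$ forever, so the number of configurations with $\ell_1,\ell_2$ in distinct child-subtrees of $y$ is zero --- no averaging over the walks, and no use of the balance condition $|T_z\cap L_h|\le M-b$ (which again concerns only leaf counts, not $F$-edges), can recover anything. The cycles in this example live at level $j=1$, not $j=0$, and your procedure has no way to discover that.

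The paper's resolution, which is the idea missing from your proposal, is to apply Lemma~\ref{balanced-root} \emph{separately for each} $w\in W$ with $S=N_F(w)$ and $b=k$ (after discarding $w$ with $d_F(w)\le kh$). This produces a vertex $r(w)$ such that almost all of $N_F(w)$ lies in $T_{r(w)}$ while no single child-subtree of $r(w)$ captures all but $k$ of it. One then prunes $F$ so that each $w$ retains only its edges into $T_{r(w)}$, pigeonholes on the level $j$ of $r(w)$ (losing a factor $1/(2k)$ in $e(F)$), and observes that the surviving graph $F_j$ splits into vertex-disjoint pieces, one per vertex of $L_j$. Every path of the right length inside a piece is then automatically confined to one subtree, and its $W$-endpoint is guaranteed at least $k$ neighbours outside the child-subtree containing the other endpoint, which is what closes the cycle. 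In short: the branch vertex must be chosen adaptively from the $F$-neighbourhoods of the $W$-vertices, not from $L_h$. Your fallback for $|L_h|<2k$ is also not needed once this is done, since the relevant quantity is $|N_F(w)|\ge d_F(w)>kh$, not $|L_h|$.
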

\begin{proof}
For each vertex $y$ in $T$, let $T_y$ denote the subtree of $T$ rooted at $y$. We clean up $F$ to get a subgraph
$F'$ of $F$ as follows. First, we delete vertices $w$ in $W$ with $d_F(w)\leq kh$.
Let $W'$ denote the set of remaining vertices in $W$. Let $w\in W'$.
Applying Lemma \ref{balanced-root} to $T$ and $S=N_F(w)$, we conclude that there exists
some vertex $r(w)\in L_j$ for some $j\in [h-1]$ such that there are at least
$|N_F(w)|-kj$ members of $N_F(w)$ that lie in $T_{r(w)}$. Furthermore, for any child $z$ of $r(w)$ in $T$,
there are at least $k$ members of $N_F(w)\cap V(T_{r(w)})$ that lie outside $T_z$.
To form $F'$, we include edges between $w$ and $N_F(w)\cap V(T_{r(w)})$ for each $w\in W'$. By our assumptions,
in forming $F'$ from $F$ we have deleted at most $kh$ edges incident to each $w\in W$. Hence,
\[ e(F')\geq e(F)-kh|W|.\]

For each $j\in [h-1]$, let $W_j=\{w\in W': r(w)\in L_j\}$ and let $F_j$ be the subgraph of $F'$ induced by $L_h\cup W_j$.
Let us choose an $j\in [h-1]$ such that $e(F_j)$ is maximum. Then
\begin{equation} \label{Fj-lower}
e(F_j)\geq\frac{1}{k} e(F')\geq  \frac{1}{k}(e(F)-kh|W|)\geq \frac{1}{2k}e(F),
\end{equation}
where the last inequality follows from the fact that $e(F)=d_W|W|\geq  16k^2|W|\geq 2kh|W|$,

Suppose $L_j=\{z_1,z_2,\dots, z_t\}$. For each $e\in [t]$, let $S_e=L_h\cap V(T_{z_e})$. By our definition of $F'$ and
$F_j$, in $F_j$ each $w\in W_j$ has edges to precisely one $S_e$. Let $N_e =N_{F_j}(S_e)$. Then
$N_1,\dots, N_t$ partition $W_j$ and $F_j$ is the vertex disjoint union of $F_j[S_e\cup N_e]$, for $e\in [t]$.
Let $m=k-(h-j)$. 

\begin{claim} \label{claim:cycle-level}
Every $(2m-1)$-path $P$ in $F_j$ extends to a $C_{2k}$ in $G$ that contains a vertex in $L_j$.
\end{claim}
{\it Proof of Claim.}
Consider any $(2m-1)$-path $P$ in $F_j$. By our discussion, $P\subseteq F_j[S_e\cup N_e]$
for some $e\in [t]$.  Suppose $Q=v_1v_2\dots v_{2m}$, where $v_1\in S_e$ and $v_{2m}\in N_e$.
Then $r(v_2)=r(v_{2m})=z_e$. Let $a$ denote the child of $z_e$ in $T_{z_e}$ such that $v_1$ lies under $a$.
Since $r(v_{2m})=z_e$, by definition, $v_{2m}$ has at least $k$ neighbors in $T_{z_e}$ that lies outside $T_a$.
Among them, at least one, say $u$ lies outside $V(P)$.  Let $Q,Q'$ denote the unique
$(v_1,z_e)$-path and the unique $(u,z_e)$-path in $T_{z_e}$, respectively. Since $v_1$ and $u$ lie under different children of $z_e$,
$V(Q)\cap V(Q')=\{z_e\}$. Now, $P\cup v_{2m}u\cup Q\cup Q'$ is a cycle of length $2m-1+1+2(k-m)=2k$ in $G$ that contains $z_e$.
\qed

\medskip

By Claim \ref{claim:cycle-level} the number of $C_{2k}$'s in $G$ that contain a vertex in $L_j$ is at
least the number of $(2m-1)$-paths in $F_j$.  To complete our proof, it suffices to find
a corresponding lower bound on the number of $(2m-1)$-paths in $F_j$.
For convenience, let $A=L_h$ and $B=W_j$. Let $d_A, d_B$ denote the average degrees in $F_j$ of vertices 
in $A$ and $B$, respectively. By \eqref{Fj-lower},
\[d_A\geq \frac{1}{2k} d_L\geq 8k\geq 8m\quad \mbox { and }\quad  d_B\geq \frac{1}{2k} d_W\geq 8k\geq 8m.\]

By Lemma \ref{lem:path-count} with $p=m-1$, the number of $(2m-1)$-paths in $F_j$ is at least
\[\frac{1}{2^{6(m-1)+1}} e(F) [d_Ad_B]^{m-1}\geq \frac{1}{2^{6m}}(\frac{1}{2k})^{2m-2} e(F) d_L^{m-1}d_W^{m-1}\geq\alpha_k |L_h|d_L^m d_W^{m-1}
=\alpha_k|L_h|d_L^{k-h+j} d_W^{k-h+j-1},\]
where $\alpha=\frac{1}{2^{6k}}(\frac{1}{2k})^{2k-2}$.
This completes our proof. 
\end{proof}

In the next theorem, we use Lemma \ref{lem:c2k-count} to quickly obtain the desired lower bound on the number of
$C_{2k}$'s in almost regular $n$-vertex graphs whose number of edges is $\Theta(n^{1+1/k})$.

\begin{theorem}\label{thm:almost-regular-count}
Let $k\geq 2$ be an integer. Let $D,\lambda>0$ be constants where $D\geq 64k^2$ and $\lambda\geq 1$. 
Let $n_0=(8\lambda)^k$. Let $G$ be an $n$-vertex graph, $n\geq n_0$ such that for each $v\in V(G)$, $D n^{1/k}\leq d(v)\leq \lambda D n^{1/k}$.
Then there exists a positive constant $\beta=\beta(D,\lambda,k)$ such that $t_{C_{2k}}(G)\geq \beta n^2$.
\end{theorem}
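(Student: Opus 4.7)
The plan is to apply Lemma~\ref{lem:c2k-count} to the truncated breadth-first search tree rooted at each $x \in V(G)$, and to combine the resulting cycle counts via a pigeonhole argument on the level $j(x)$ returned by the lemma.

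Fix $x \in V(G)$ and let $T_x$ be the truncated BFS tree of height $h=k-1$ rooted at $x$, with layers $L_0(x),\ldots,L_{k-1}(x)$. The almost-regularity, combined with $D \geq 64k^2$ and $n \geq (8\lambda)^k$, gives $|V(T_x)| \leq 2(\lambda D)^{k-1} n^{1-1/k} \leq n/2$, so $W_x := V(G) \setminus V(T_x)$ satisfies $|W_x| \geq n/2$. Each leaf $v \in L_{k-1}(x)$ has $\deg_G(v) \geq Dn^{1/k}$, and its neighbors in $V(T_x)$ all lie in $L_{k-2}(x) \cup L_{k-1}(x)$, so the average degree $d_L$ of $L_{k-1}(x)$-vertices into $W_x$ in the bipartite graph $F_x$ satisfies $d_L \geq \tfrac12 Dn^{1/k} \geq 16k^2$. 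Combined with a lower bound $|L_{k-1}(x)| \geq c\, n^{1-1/k}$ (ensured by a suitable choice of $x$, or by passing to a subtree and applying Lemma~\ref{lem:highmindegree}), the identity $d_W|W_x| = d_L|L_{k-1}(x)|$ forces $d_W \geq cD \geq 16k^2$.

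Lemma~\ref{lem:c2k-count} now yields some $j(x) \in [k-1]$ such that $G$ contains at least
\[ f(x) \;:=\; \alpha_k\, |L_{k-1}(x)|\, d_L^{\,1+j(x)}\, d_W^{\,j(x)} \;\geq\; C_0\, n^{(k+j(x))/k} \]
copies of $C_{2k}$ through some vertex of $L_{j(x)}(x)$, where $C_0 = C_0(D,\lambda,k) > 0$. Partition $V(G) = \bigsqcup_{j=1}^{k-1} X_j$ with $X_j := \{x : j(x)=j\}$; by pigeonhole there is $j^* \in [k-1]$ with $|X_{j^*}| \geq n/(k-1)$, so $\sum_{x \in X_{j^*}} f(x) \geq \tfrac{C_0}{k-1}\, n^{1+(k+j^*)/k}$. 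On the other hand, each $C_{2k}$ in $G$ contributes at most $\sum_{v \in V(C)} |L_{j^*}(v)| \leq 2k(\lambda D)^{j^*} n^{j^*/k}$ terms to $\sum_{x \in X_{j^*}} f(x)$, as the only $x$'s counting a given cycle in $f$ are those at distance $j^*$ from one of its vertices. Dividing the two bounds gives
\[ t_{C_{2k}}(G) \;\geq\; \frac{C_0}{2k(k-1)\,(\lambda D)^{j^*}}\, n^2 \;\geq\; \beta\, n^2 \]
for a positive constant $\beta = \beta(D,\lambda,k)$, as required.

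The main obstacle is establishing the uniform lower bound $|L_{k-1}(x)| \geq c\, n^{1-1/k}$ (and hence $d_W \geq 16k^2$): for vertices $x$ in low-expansion regions of $G$, the BFS truncated at depth $k-1$ can fall short of this, and then the hypotheses of Lemma~\ref{lem:c2k-count} are not directly met. To handle such $x$, one would either choose $h(x)$ adaptively (as the largest depth where BFS still has enough expansion), restrict the pigeonhole to a constant-fraction subset of $V(G)$ on which BFS behaves uniformly, or combine the tree construction with Lemma~\ref{lem:highmindegree} to extract a sub-bipartite graph with the needed minimum degrees; the remaining steps (size estimates, pigeonhole, overcounting) are then a routine calculation.
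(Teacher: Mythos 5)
Your overall architecture --- BFS tree rooted at each $x$, apply Lemma~\ref{lem:c2k-count}, pigeonhole on the level $j(x)$, then correct for overcounting via the bound $(\lambda D n^{1/k})^{j^*}$ on the number of roots seeing a given vertex at distance $j^*$ --- is exactly the paper's, and your final pigeonhole/overcounting computation is sound. But the step you flag as ``the main obstacle'' is not a loose end to be routinely patched; it is the central device of the proof, and your fixed-height version genuinely fails. With $h = k-1$ there is no reason for $|L_{k-1}(x)| \geq c\,n^{1-1/k}$ to hold: for $k\geq 3$, take $G$ to be a disjoint union of cliques of size $\Theta(Dn^{1/k})$ (which satisfies the degree hypotheses); then $|L_i(x)| = O(n^{1/k}) \ll n^{(k-1)/k}$ for every $i$ and every $x$, so $d_W$ can be far below $16k^2$ and Lemma~\ref{lem:c2k-count} does not apply to any root. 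The paper's fix is the first of the options you list: define $h(x)$ to be the \emph{least} $i\leq k-1$ with $|L_{i+1}(x)|/|L_i(x)| < n^{1/k}$ (which always exists). This choice does two jobs at once: it guarantees $|L_h(x)|\geq n^{h/k}$, so that the per-root cycle count $\alpha_k |L_h| d_L^{\,k-h+j}$ is still $\Omega(n^{1+j/k})$ \emph{independently of} $h$ (which is why the varying height causes no trouble downstream), and it guarantees $|L_{h+1}(x)| < n^{1/k}|L_h(x)|$, which is precisely what yields $d_W \geq \tfrac14 D \geq 16k^2$. Without this adaptive stopping rule the proof does not go through.

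A secondary gap: your derivation of $d_L \geq \tfrac12 D n^{1/k}$ from $|V(T_x)|\leq n/2$ is not valid. A leaf $v\in L_{k-1}(x)$ could have all of its $\deg_G(v)$ neighbours inside $L_{k-2}(x)\cup L_{k-1}(x)$ even when the tree occupies half the vertex set, so knowing $|W_x|\geq n/2$ says nothing about how many edges leave the tree. The paper instead exploits the geometric growth of the levels up to height $h$ (using $n^{1/k}\geq 8\lambda$ from the hypothesis $n\geq (8\lambda)^k$) to get $|V(T)\setminus L_h(x)| \leq \tfrac{1}{4\lambda}|L_h(x)|$, so that edges from $L_h(x)$ back into earlier levels account for at most $\tfrac14 D n^{1/k}|L_h(x)|$ of the degree sum, leaving $e(F)\geq \tfrac14 D n^{1/k}|L_h(x)|$ going forward. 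Both missing pieces are supplied simultaneously by the adaptive choice of $h(x)$, so I would regard that choice as the one idea your proposal is missing rather than an implementation detail.
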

\begin{proof} 
For each $x\in V(G)$, let $L_i(x)$ denote the set of vertices at distance $i$ from $x$.
Let $h(x)$ be the minimum $i\leq k-1$ such that $|L_{i+1}(x)|/|L_i(x)|<n^{1/k}$. Clearly $h(x)$ exists
or else we run out of vertices. Let $h=h(x)$. Let $T$ be a breadth first search tree rooted at $x$
that includes $L_0(x), L_1(x),\dots, L_h(x)$. By our assumption, 
\[|L_h(x)|\geq n^{h/k}  \mbox{ and } |L_{h+1}(x)|<n^{1/k}|L_h(x)|.\]
Recall that $|L_{i+1}(x)|/|L_i(x)|\geq n^{1/k}$ for all $i=0,1,\dots, h-1$. Since $n\geq n_0\geq (8\lambda)^k$,
$n^{1/k}\geq 8\lambda$. By our assumption,
\[|V(T)\setminus L_h(x)|\leq |L_h|\sum_{i=1}^{h-1}(\frac{1}{8\lambda})^i\leq \frac{1}{4\lambda} |L_h(x)|.\]
Let $F$ be the bipartite subgraph of $G$ consisting of all the edges of $G$ between $L_h(x)$ and $L_{h+1}(x)$.
The total number of edges of $G$ incident to $L_h(x)$ is at least $Dn^{1/k}|L_h(x)|/2$. 
Among them, the number of edges that are incident to $V(T)\setminus L_h(x)$ is at most 
\[(1/4\lambda ) \lambda D n^{1/k}|L_h(x)|=(1/4)Dn^{1/k}|L_h(x)|.\]
Hence,
\[e(F)\geq (1/4)Dn^{1/k}|L_h(x)|.\]
Let $d_A, d_B$ denote the average degrees in $F$ of vertices in $L_h(x)$ and $L_{h+1}(x)$, respectively.
Then
\[d_A\geq (1/4)Dn^{1/k}\geq 16k^2n^{1/k}\geq 16k^2,\]
and 
\[d_B\geq (1/4)Dn^{1/k}|L_h(x)|/|L_{h+1}(x)|\geq (1/4)D\geq 16k^2.\]
By Lemma \ref{lem:c2k-count}, there exists a $j\in [h-1]$ such that the number of $C_{2k}$'s in $G$
that contain a vertex in $L_j(x)$ is at least 
\[\alpha_k |L_h| d_A^{k-h+j}\geq \alpha_k n^{h/k} (n^{1/k})^{k-h+j}=\alpha_k n^{1+\frac{j}{k}}.\] 
Let us denote this $j$ value by $j(x)$. 
 For each $t\in [h-1]$, let $S_t=\{x\in V(G): j(x)=t\}$. By the pigeonhole principle,
 for some $t\in [h-1]$, we have $|S_t|\geq n/(h-1)$. Let us fix such a $t$.
 By our discussion, for each $x\in S_t$, the number of $C_{2k}$'s that contain a vertex in $L_t(T_x)$
 is at least $\alpha_k n^{1+\frac{t}{k}}$. On the other hand, a vertex $y$ lies in $L_t(T_x)$ for at most $[\lambda D n^{1/k}]^t$
 different $x$. Hence the number of distinct $C_{2k}$'s in $G$ is at least
 \[|S_t| \alpha_k n^{1+\frac{t}{k}}/\lambda^t D^t n^{\frac{t}{k}}\geq (\alpha_k/k\lambda^k D^k) n^2.\]
 The claim holds by setting $\beta=\alpha_k/k\lambda^k D^k$.
\end{proof}

Now we can prove the $r=2$ case of Theorem \ref{thm:main}.

\medskip

{\bf Proof of the $r=2$ case of Theorem \ref{thm:main}:} Theorem \ref{thm:almost-regular-count} applies along as $D\geq 64k^2$,
$\lambda\geq 1$, and $n_0\geq (8\lambda)^k$. To apply Corollary \ref{cor:linear-cycle-reduction}, 
we set $D=\max\{64k^2, m_k\}, \lambda=q_kD$ and $M_k=(8\lambda)^k$,
where $m_k, q_k$ are as  given in Corollary \ref{cor:linear-cycle-reduction}. 
The claim follows readily from Corollary \ref{cor:linear-cycle-reduction}. \qed

\section{Supersaturation of even linear cycles in linear hypergraphs}\label{sec:hypergraphs}
For the supersaturation of linear cycles, we follow the approach of the $r=2$ case. However, instead of using the usual BFS tree, we need an adaption of it to hypergraph case. We define the notion of \emph{maximal rainbow rooted tree} in  Section 5.2.

\subsection{Notation and Preliminary Results} \label{supersat-prelim}

Let $H$ be a graph and $S$ be a some set of vertices, where possibly $S\cap V(H)\neq \emptyset$.
Let $\varphi$ be any colouring of the edges of $H$ using  non-empty subsets of $S$.
Given any subgraph $F$ of $H$, we let 
\[\mathcal{C}(F)=\bigcup_{e\in E(F)} \varphi(e),\]
and call it the {\it colour set of $F$ under $\varphi$}. We say
that $\varphi$ is {\it strongly proper} on $H$  if for any $e,e' \in E(H)$ that share a vertex we have $\varphi(e)\cap \varphi(e') = \emptyset$.  
We say that  $\varphi$ is  {\it rainbow} on $F$  (or that $F$ is {\it rainbow under $\varphi$}) if for every two edges $e,e'$ in $F$
we have $\varphi(e)\neq \varphi(e')$ and that 
$\mathcal{C}(F)$ is disjoint from $V(F)$.  Note that if $\varphi$ uses
 $(r-2)$-subsets of $S$ and $F$ is rainbow under $\varphi$ then $F\cup \mathcal{C}(F)$ forms an $r$-expansion of $F$.
Observe that  if $G$ is an $r$-partite linear $r$-graph with an $r$-partition $(A_1,\dots, A_r)$
then the \emph{natural} colouring $\varphi$ of $P_{i,j}(G)$, where $\forall f\in E(P_{i,j}(G)) \, \varphi(f)$ is the unique $(r-2)$-tuple
$I_f$ for which $f\cup I_f\in E(G)$, is strongly proper on $P_{i,j}(G)$ by the linearity of $G$.

Let $G$ be an $r$-graph  and  $v\in V(G)$. Recall the definition of 
$L_{G}(v)$ from the introduction. For any subset $S\subseteq V(G)$ we denote by $L_{G}(v)|_S$ the restriction of the link of $v$ to $S$, that is,
\[L_{H}(v)|_S = \{I \subseteq S | I \in L_H(v)\}.\]

We give a very crude analogue of Lemma \ref{lem:path-count}, this time counting rainbow paths of
a given length in an asymmetric bipartite graph. 

\begin{lemma} \label{lem:rainbow-path-count}
Let $p,m$ be positive integers and $H$ be a bipartite graph with a bipartition $(A,B)$. Let $\varphi$ be a strongly proper edge-colouring 
of $H$ using $m$-sets. If $\delta_A, \delta_B\geq 4p(m+1)$ then the number of rainbow paths of length $2p+1$ in $H$ is at least $\frac{1}{2^{2p}}\brm{e}(H)\left(\delta_A\delta_B\right)^p$.
\end{lemma}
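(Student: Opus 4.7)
The plan is to count ordered rainbow $(2p+1)$-paths in $H$ by greedy extension and then divide by the two orientations of each path. I first pick an oriented edge $(v_0,v_1)$, of which there are $2\brm{e}(H)$ many. Then, for $i=2,3,\dots,2p+1$, I extend the current rainbow path $P_{i-1}=v_0v_1\cdots v_{i-1}$ by picking $v_i$ adjacent to $v_{i-1}$ in $H$ so that the enlarged path $P_i$ remains rainbow. The key technical content is showing that, at each extension step, the number of valid choices of $v_i$ is at least $\delta(v_{i-1})/2$; chaining this over the $2p$ extensions gives at least $(\delta_A/2)^p(\delta_B/2)^p$ continuations per oriented starter, and the final division by $2$ produces exactly the claimed bound $\tfrac{1}{2^{2p}}\brm{e}(H)(\delta_A\delta_B)^p$.

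To bound the bad neighbors of $v_{i-1}$, I enumerate the obstructions to the rainbow extension: (a) $v_i\in V(P_{i-1})$; (b) $v_i\in\mathcal{C}(P_{i-1})$; (c) $\varphi(v_{i-1}v_i)$ coincides with one of the $i-1$ colors already used on $P_{i-1}$; (d) $\varphi(v_{i-1}v_i)$ meets $V(P_i)=V(P_{i-1})\cup\{v_i\}$. The driving tool is the strongly proper hypothesis: the colors of edges incident to $v_{i-1}$ are pairwise disjoint $m$-sets, so each fixed element of $S$ occurs in at most one such color, and each fixed color occurs on at most one such edge. Hence constraint (a) rules out at most $i-1$ edges (one per forbidden endpoint), (b) at most $(i-1)m$, (c) at most $i-1$, and the part of (d) involving $V(P_{i-1})$ at most $|V(P_{i-1})|=i$. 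Since $i\le 2p+1$ and $\delta_A,\delta_B\ge 4p(m+1)$, the sum of these bad counts is comfortably at most $\delta(v_{i-1})/2$, so at least half the incident edges yield valid rainbow extensions.

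The only delicate point is the subcondition $v_i\notin\varphi(v_{i-1}v_i)$ inside (d). When $S\cap V(H)=\emptyset$ (which is the case in the intended application to a $2$-projection of a linear $r$-partite $r$-graph, where colors live in the remaining partite classes) this is automatic and the proof goes through cleanly. In the general setting it requires a small separate bound: for any fixed $w\in S\cap V(H)$, strong properness at $w$ shows that $w$ itself appears in at most one color among the edges at $w$, so absorbing this into (d) costs only one additional forbidden edge per potential endpoint and does not change the asymptotics. The main obstacle I expect is simply bookkeeping this union-bound accounting carefully enough to stay under the threshold $\delta/2$ at every step; once that is done, the product formula and the final division by $2$ yield the stated inequality.
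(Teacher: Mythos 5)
Your overall strategy is the same as the paper's: grow the path greedily one vertex at a time, use strong properness to bound the number of forbidden extensions at each step by half the degree, and multiply. (The paper avoids your final division by two by insisting the first vertex lies in $A$, so each odd-length path is generated exactly once; your oriented-starter version is equivalent.) However, your union-bound accounting does not in fact stay under the threshold $\delta(v_{i-1})/2$, and since the hypothesis $\delta_A,\delta_B\geq 4p(m+1)$ has no slack, this is a real gap rather than a cosmetic one. Summing your four obstruction counts at the last step $i=2p+1$ gives $(i-1)+(i-1)m+(i-1)+i=2pm+6p+1$, which exceeds the available budget $\delta/2\geq 2p(m+1)=2pm+2p$ by $4p+1$ (and by more once you add the extra term for the ``delicate point''). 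The overshoot comes from double counting across obstruction types: a vertex $w\in V(P_{i-1})$ is charged once under (a), as a possible endpoint $v_i=w$, and again under (d), as a possible element of $\varphi(v_{i-1}v_i)$; similarly an element of an old colour is charged under both (b) and (c).

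The repair is exactly the paper's bookkeeping: do the union bound over forbidden \emph{elements} rather than over obstruction types. Since $\varphi$ is strongly proper at $v_{i-1}$, the sets $\{u\}\cup\varphi(v_{i-1}u)$ for $u\in N_H(v_{i-1})$ form an $(m+1)$-uniform matching of size $d_H(v_{i-1})$, so each element of the forbidden set $\bigl(V(P_{i-1})\setminus\{v_{i-1}\}\bigr)\cup\mathcal{C}(P_{i-1})$, which has at most $(i-1)(m+1)\leq 2p(m+1)$ elements, destroys at most one member of this matching, whether it occurs as the endpoint $u$ or inside the colour $\varphi(v_{i-1}u)$. This single count subsumes all four of your obstructions simultaneously, leaves at least $\delta-2p(m+1)\geq\delta/2$ good extensions, and in fact enforces the stronger property that the new colour is \emph{disjoint} from (not merely unequal to) the previous colours, which is what is actually needed downstream to convert rainbow paths into linear paths. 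Your remark about $v_i\in\varphi(v_{i-1}v_i)$ is correctly resolved: in the intended application the colours live outside $V(H)$, and in general the issue is absorbed into the $(m+1)$-uniformity of the matching.
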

\begin{proof}
Consider growing a rainbow path $P=v_1v_2\dots v_{2p+2}$ where $v_1\in A$ and $v_{2p+2}\in B$. There are $\brm{e}(H)$ choices for
$v_1v_2$. In general, suppose the subpath $v_1v_2\dots v_t$ has been grown, where $2\leq t\leq 2p+1$. If $v_t\in A$ then we let $v_{t+1}$ be a neighbor
of $v_t$ in $B$ such that $\{v_{t+1}\}\cup \varphi(v_t v_{t+1})$ 
is disjoint from  $((V(P_t)\setminus\{v_t\})\cup \mathcal{C}(P_t)$. If $v_t\in B$, $v_{t+1}$ is defined symmetrically.
Assume first that $v_t \in A$.
Note that 
\[|(V(P_t)\setminus \{v_t\})\cup \mathcal{C}(P_t))|\leq t-1+(t-1)m\leq 2p(m+1).\]
Since $\varphi$ is strongly proper, the set $\{u\cup \varphi(u): u\in N_H(v_t)\}$ is an $(m+1)$-uniform matching of size $d_H(v_t)$.
At most $2p(m+1)$ of these members contain a vertex in $(V(P_t)\setminus \{v_t\})\cup \mathcal{C}(P_t)$. So there
are at least $d_H(v_t)-2p(m+1)\geq \delta_A-2p(m+1)\geq \frac{1}{2}\delta_A $ choices for $v_{t+1}$. Similarly, if $v_t\in B$,
there there are at least $\frac{1}{2}\delta_B$ choices for $v_{t+1}$. Hence, the number of ways to grow $P$ is at least
\[\brm{e}(H)(\frac{1}{2} \delta_A)^p (\frac{1}{2}\delta_B)^p=\frac{1}{2^{2p}}\brm{e}(H)(\delta_A\delta_B)^p.\]
\end{proof}

\begin{lemma}[Splitting Lemma]\label{lem:splitting}Suppose we are given $D\in \mathbb{R}^+$, $\gamma \in (0,1)$ and integers $k,r\geq2$. There exists $n_0=n_{\ref{lem:splitting}}(D,k,r,\gamma)$ such that for all $n\geq n_0$  if $G$ is a linear $r$-partite $r$-graph such that two of its $r$-partition classes, say $A$ and $B$, satisfy that $|A\cup B|=n$ and that $|L_G(v)| \geq Dn^{\gamma}$ for each $v\in A\cup B$ then there exists a partition of $V(G)$ into $S_1, S_2, \dots, S_k$ such that for every $v\in A\cup B$ and every $i\in [k]$, we have
$$|L_{G}(v)|_{S_i}| \geq \frac{Dn^{\gamma}}{2 k^{r-1}}.$$
\end{lemma}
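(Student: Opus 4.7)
The plan is to prove the lemma by a standard probabilistic argument: partition $V(G)$ uniformly at random into $k$ parts $S_1,\dots,S_k$ (each vertex independently assigned to one of the $k$ classes with probability $1/k$) and verify via Chernoff plus a union bound that with positive probability every $v\in A\cup B$ satisfies the required link-restriction bound on every $S_i$.

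The key structural observation that makes the argument clean is a direct consequence of the linearity hypothesis on $G$. If $I_1,I_2\in L_G(v)$ are two distinct $(r-1)$-subsets, then $I_1\cup\{v\}$ and $I_2\cup\{v\}$ are two distinct edges both containing $v$, so by linearity $(I_1\cup\{v\})\cap(I_2\cup\{v\})=\{v\}$, which forces $I_1\cap I_2=\emptyset$. Thus $L_G(v)$ is a family of pairwise disjoint $(r-1)$-sets, none of which contains $v$. This disjointness is exactly what supplies independence in the next step.

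Fix $v\in A\cup B$ and $i\in[k]$. Writing
\[X_{v,i} \;=\; |L_G(v)|_{S_i}| \;=\; \sum_{I\in L_G(v)}\mathbf{1}[I\subseteq S_i],\]
the disjointness observation above ensures that the indicators $\mathbf{1}[I\subseteq S_i]$ depend on disjoint collections of vertex-assignments and are therefore mutually independent Bernoulli variables with common parameter $p=k^{-(r-1)}$. Consequently $X_{v,i}$ is a sum of $|L_G(v)|\geq Dn^{\gamma}$ independent $\mathrm{Bernoulli}(p)$ variables, with mean $\mu_v\geq Dn^{\gamma}/k^{r-1}$.

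The last step is to invoke Chernoff's inequality (e.g.\ Lemma \ref{lem:chernoff} with $a=1/2$) to conclude that $\mathbb{P}[X_{v,i}<\mu_v/2]\leq 2\exp(-\mu_v/12)$, and then to take a union bound over the at most $n$ vertices $v\in A\cup B$ and the $k$ values of $i$. The total failure probability is bounded by $2nk\exp(-Dn^{\gamma}/(12k^{r-1}))$, which is less than $1$ for all $n\geq n_0$ provided $n_0=n_0(D,k,r,\gamma)$ is chosen large enough that $Dn_0^{\gamma}/(12k^{r-1})>\log(2nk)$. Any partition in this positive-probability event gives the desired $S_1,\dots,S_k$. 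There is no real obstacle here; the whole argument rests on noticing that linearity turns each link into a matching of $(r-1)$-sets, which is exactly the independence-enabling structure needed to push through the Chernoff bound.
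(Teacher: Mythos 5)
Your proposal is correct and follows essentially the same argument as the paper: a uniform random $k$-partition, the observation that linearity makes the links pairwise disjoint (hence the indicators independent and $|L_G(v)|_{S_i}|$ binomial), Chernoff with deviation $1/2$, and a union bound over the at most $kn$ pairs $(v,i)$. No substantive differences.
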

\begin{proof} 
Let us independently assign each vertex $x$ in $V(G)$ a colour from $[k]$ chosen uniformly at random.  Let $S_i$ be the vertices of assigned colour $i$. For a vertex $v\in A\cup B$, we denote by $X_i(v)$  the number of edges (which are $(r-1)$-sets) in $L_G(v)$ that are completely contained in $S_i$. For each $I\in L_{G}(v)$
\[\mP[I\subseteq S_i] = \frac{1}{k^{r-1}}.\]
Since $G$ linear, edges in $L_G(v)$ are pairwise disjoint. Hence the events  $\{I\subseteq S_i\}$, for different $I\in L_{G}(v)$'s are independent.
Therefore $X_i(v)$ has binomial distribution $\brm{BIN}(d_G(v), \frac{1}{k^{r-1}})$. Writing $d$ for $d_G(v)$, we have 
$\mE(X_i(v))=\frac{d}{k^{r-1}}$. By the Chernoff bound,
\[\mP[X_i(v) < \frac{d}{2 k^{r-1}}] \leq P[|X_i(v)- \frac{d}{ k^{r-1}}| < \frac{d}{2 k^{r-1}}]<2e^{-\frac{d}{12k^{r-1}}}<2e^{-\frac{Dn^{\gamma}}{12k^{r-1}}}.\]

Therefore the probability that for some vertex $v\in A\cup B$ and some $i\in [k]$ such that  the event $\{X_i(v) < \frac{d}{2 k^{r-1}}\}$ occurs is less than
$$kn\cdot 2e^{-\frac{Dn^{\gamma}}{12k^{r-1}}}<1,$$
when $n_2$ is large enough and  $n\geq n_2$. Thus there exists some colouring which guarantees for every vertex  $v\in A\cup B$ to have
$$|L_{G}(v)|_{S_i}| \geq \frac{d}{2 k^{r-1}} \geq  \frac{D n^{\gamma}}{2 k^{r-1}}.$$
\end{proof}

Before we establish supersaturation of $C^{(r)}_{2k}$'s in linear $r$-partite $r$-graphs that have
an almost regular $2$-projection with the right density, we need another lemma.
Given an $r$-graph $G$, where $r\geq 2$, and $S\subseteq V(G)$, $S$ is a {\it vertex cover} of $G$ if $S$
contains at least one vertex of each edge of $G$. 

\begin{lemma} \label{lem:cross-cut}
Let $r\geq 2$. Let $G$ be an $r$-graph and $S$ a vertex cover of $G$. 
There exist a subset $S'\subseteq S$ and a subgraph $G'\subseteq G$ such
that $\brm{e}(G')\geq \frac{r}{2^r} \brm{e}(G)$ and that $\forall e\in E(G') \, |e\cap S'|=1$.
\end{lemma}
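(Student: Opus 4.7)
The plan is to choose $S'$ randomly as a subset of $S$ by including each vertex of $S$ independently with probability $1/2$, and then take $G'$ to be the subgraph consisting of those edges $e$ with $|e\cap S'|=1$. For a fixed edge $e\in E(G)$, write $k_e=|e\cap S|$. Since $S$ is a vertex cover we have $1\leq k_e\leq r$, and
\[
\mathbb{P}\bigl[|e\cap S'|=1\bigr] \;=\; \binom{k_e}{1}\Bigl(\tfrac{1}{2}\Bigr)^{\!1}\Bigl(\tfrac{1}{2}\Bigr)^{\!k_e-1} \;=\; \frac{k_e}{2^{k_e}}.
\]

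The key (elementary) observation is that the function $f(k)=k/2^k$ is non-increasing on the integers $k\geq 1$: indeed $f(k)/f(k+1)=2k/(k+1)\geq 1$ for all $k\geq 1$. Consequently, for every $k\in\{1,\dots,r\}$ we have $k/2^k\geq r/2^r$. Summing over the edges of $G$,
\[
\mathbb{E}\bigl[|\{e\in E(G):|e\cap S'|=1\}|\bigr] \;=\; \sum_{e\in E(G)}\frac{k_e}{2^{k_e}} \;\geq\; \frac{r}{2^r}\,\brm{e}(G).
\]

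By averaging, there is a deterministic choice of $S'\subseteq S$ for which the number of edges $e\in E(G)$ with $|e\cap S'|=1$ is at least $\tfrac{r}{2^r}\brm{e}(G)$. Taking $G'$ to be the subgraph on these edges yields exactly the required pair $(S',G')$. There is no real obstacle: the whole content is choosing the right sampling probability and verifying monotonicity of $k/2^k$; the probability $p=1/2$ is precisely what balances the bound across all values of $k_e\in\{1,\dots,r\}$, and any other $p$ would give a weaker ratio on the extreme $k_e=r$.
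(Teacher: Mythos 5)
Your proof is correct and is essentially identical to the paper's: both sample $S'$ by including each vertex of $S$ independently with probability $\tfrac12$, compute that an edge with $|e\cap S|=m$ is good with probability $m/2^m\geq r/2^r$, and conclude by averaging. The only difference is that you spell out the monotonicity of $k/2^k$, which the paper merely asserts.
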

\begin{proof}
Let $S'$ be a random subset of $S$ obtained by including each vertex of $S$ randomly and independently with probability $\frac{1}{2}$.
Let $e$ be any edge of $G$. Suppose $|e\cap S|=m$. Then $1\leq m\leq r$. The probability that exactly one of these $m$ vertices
of $e\cap S$ is chosen for $S'$ is $\frac{m}{2^m}\geq \frac{r}{2^r}$. So the expected number of edges of $G$ that meet $S'$
in exactly one vertex is at least $\frac{r}{2^r}\brm{e}(G)$. So there exists $S'\subseteq S$ such that at least $\frac{r}{2^r}\brm{e}(G)$ of the edges
meet $S'$ in exactly one vertex. Let $G'$ be the subgraph of $G$ consisting of these edges.
\end{proof}


\subsection{Rainbow Rooted Trees}
We now introduce the following adaption of the BFS tree to linear hypergraphs.

\begin{definition}[Maximal rooted rainbow tree] \label{def:expansiontree} Given $r\geq 3$, let $G$ be a linear $r$-partite $r$-graph  with  two of its partition classes being $A$ and  $B$ and let $t\geq 0$  be integer.  Suppose there exists a partition of $V(G)$ into $S_1, S_2, \dots, S_t$ such that for every  $v\in A\cup B$ and for every $i\in [t]$,
\begin{equation}\label{eq:nontrivial} L_G(v)|_{S_i} \neq \emptyset
\end{equation} For every $x\in A\cup B$, we define a  tree $T_x$, rooted at $x$ and of height $t$, together with a colouring $\varphi$ of its edges by $(r-2)$-sets as follows. We define the tree by defining its levels $L_i$ iteratively. The $L_i$'s will alternate between being completely inside $A$ and being completely inside $B$.
Without loss of generality, suppose $x\in A$. The tree $T_x$ is defined symmetrically if $x\in B$.
\begin{itemize}
\item [(1)]  Let $L_0=\{x\}$.
\item [(2)] Having defined $L_i$, we define $L_{i+1}$ as follows. Without loss of generality, suppose $L_i\subseteq A$. 
Let $F=\bigcup_{v\in L_i}{L_G(v)|_{S_{i+1}}}$. Since $G$ is $r$-partite with $A,B$ being two partite sets and $L_i\subseteq A$,
$V(F)$ is disjoint from $A$ and each edge of $F$ contains exactly one vertex in $B$.
Let  $M_{i+1}$ be a maximum matching in $F$. By condition \eqref{eq:nontrivial}, $M_{i+1}$ is nonempty.  Define 
\[L_{i+1}= M_{i+1}|_B= \{ b\in B|  \exists I \in M_{i+1} \textit{ such that } b\in I\}.\]
It remains to define how the vertices of $L_i$ are connected to $L_{i+1}$. For each $b\in L_{i+1}$, there exists a unique $I_b\in M_{i+1}$ which contains $b$,
and due to linearity of $G$ there is a unique $v\in L_i$ such that  $I_b\cup\{v\}\in E(G)$. We add the edge $vb$ to $T_x$ and let $\varphi(vb)= I_b\setminus\{b\}$.
\item [(3)] Repeat step (2) until all vertices of $H$ are exhausted or $i>t$.
\end{itemize}
\end{definition}

\begin{proposition}
Under the assumptions of Definition \ref{def:expansiontree}, $T_x$ is a tree of height $t$ rooted at $x$ that is rainbow under the assigned colouring $\varphi$.
In particular, if $P$ is a path in $T_x$ then $P\cup \mathcal{C}(P)$ is a linear path of the same length in $G$ with $P$ being a skeleton of it.
\end{proposition}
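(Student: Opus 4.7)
The plan is to verify, in order, that $T_x$ is a tree of height $t$, that $\varphi$ is rainbow on $T_x$, and that any path in $T_x$ lifts through $\varphi$ to a linear path of the same length in $G$.

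For the tree structure, the key observation is that for each $i \geq 1$, the level $L_i$ is contained in $S_i$: indeed $M_i$ is a matching in $F_i = \bigcup_{v \in L_{i-1}} L_G(v)|_{S_i}$, so every edge of $M_i$ is an $(r-1)$-subset of $S_i$, and $L_i$ is just its projection to $B$ (or to $A$, depending on parity). Since the $S_i$ partition $V(G)$, the levels $L_1, \dots, L_t$ are pairwise disjoint, while the alternating partite-class structure $L_0, L_2, \dots \subseteq A$ and $L_1, L_3, \dots \subseteq B$ already separates $L_0 = \{x\}$ from every odd-indexed level. Each non-root vertex $b \in L_{i+1}$ lies in a unique $I_b \in M_{i+1}$ by the matching property, and by linearity of $G$ this $(r-1)$-set extends uniquely to a hyperedge $I_b \cup \{v\} \in E(G)$ with $v \in L_i$, giving $b$ a single well-defined parent. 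Condition~\eqref{eq:nontrivial} ensures that $F_{i+1}$ and hence $M_{i+1}$ is non-empty whenever $L_i$ is, so the height is exactly $t$.

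For the rainbow property, the colours $\varphi(vb) = I_b \setminus \{b\}$ at level $i+1$ are $(r-2)$-subsets of $S_{i+1}$. Since $M_{i+1}$ is a matching, distinct tree-edges at the same level have disjoint colour sets; colours at different levels lie in distinct $S_i$'s and are therefore also disjoint. In particular all colours are pairwise disjoint and hence distinct. Moreover, since $I_b$ sits inside an edge of $G$ meeting every partite class exactly once and $b$ is its unique $B$-vertex, $\varphi(vb) = I_b \setminus \{b\}$ consists of one vertex from each of the $r-2$ partite classes other than $A$ and $B$. Because $V(T_x) \subseteq A \cup B$, this yields $\mathcal{C}(T_x) \cap V(T_x) = \emptyset$ and completes the rainbow verification.

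For the path claim, take $P = v_0 v_1 \cdots v_\ell$ in $T_x$. Each edge $v_{j-1} v_j$ is a tree-edge, so $\{v_{j-1}, v_j\} \cup \varphi(v_{j-1}v_j) = I_{v_j} \cup \{v_{j-1}\} \in E(G)$, and rainbow-ness guarantees that the $(r-2)$-sets $\varphi(v_{j-1}v_j)$ for $j \in [\ell]$ are pairwise disjoint and disjoint from $V(P)$. Hence $P \cup \mathcal{C}(P)$ is exactly the $r$-expansion of the $2$-uniform path $P$, i.e.\ a linear path in $G$ of length $\ell$ with $P$ as skeleton. The only minor subtlety I foresee is the case $x \in S_j$ for some even $j \geq 2$, where $L_j$ could a priori re-visit $x$; this is easily neutralized by restricting the maximum matching defining $M_j$ to edges avoiding $x$, which still has a non-empty link per~\eqref{eq:nontrivial}.
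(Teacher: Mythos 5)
Your proof is correct and follows essentially the same route as the paper: disjointness of colours assigned at distinct levels comes from the partition $S_1,\dots,S_t$, disjointness at the same level comes from $M_{i+1}$ being a matching, and $\mathcal{C}(T_x)\cap V(T_x)=\emptyset$ because the colours live in the partite classes other than $A$ and $B$. You are somewhat more careful than the paper in two places --- verifying that the levels are pairwise disjoint (the paper dismisses the tree structure as ``clear from the definition'') and flagging the edge case that the root could reappear in a later even level, with a harmless fix --- but these elaborations do not change the argument.
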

\begin{proof}
That $T_x$ is a height $t$ tree rooted at $x$ is clear from the definition. We now show that $T_x$ is rainbow under $c$.
By the way we define $T_x$ and $c$, $\mathcal{C}(T_x)\cap V(T_x)=\emptyset$.
Let $e,e'$ be any two edges in $T_x$. Suppose $e$ joins a vertex in $L_i$ to $L_{i+1}$ and $e'$ joins a vertex in $L_{i'}$ to
$L_{i'+1}$. If $i\neq i'$, then $\varphi(e)\cap \varphi(e')=\emptyset$, since $\varphi(e)\subseteq S_{i+1}$ and $\varphi(e')\subseteq S_{i'+1}$ and $S_{i+1}\cap S_{i'+1}=\emptyset$.
If $i=i'$ then $e\subseteq I$ and $e'\in I'$ for two different members $I,I'\in M_{i+1}$. Since $M_{i+1}$ is a matching, $\varphi(e)\cap \varphi(e')=\emptyset$.\
So $T_x$ is rainbow under $c$.

The second statement follows immediately from our discussion in Subsection \ref{supersat-prelim}
that a rainbow subgraph $F$ together with it colours form an expansion of $F$.
\end{proof}

We are now ready to prove the following analogue of Lemma \ref{lem:c2k-count}.
As we mentioned in the introduction, we will give a slightly different proof from that
of Lemma \ref{lem:c2k-count}. Instead of using Lemma \ref{balanced-root}, we will use
the strong/weak level notion used by Faudree and Simonovits \cite{FS} in the
study of theta graphs. Let us remark that we could also prove Lemma \ref{lem:c2k-count-hyper}
using Lemma \ref{balanced-root} and Lemma \ref{lem:rainbow-path-count}. But we feel that there is also a benefit
to use the strong/weak level notion used by Faudree and Simonovits since this is the
original approach we used to solve the problem and also that it is on some level more intuitive.

\begin{lemma}\label{lem:c2k-count-hyper}
Let $i,k,m$ be integers where $k\geq i+1\geq 1$, $m\geq 1$. Let $b,d$ be reals satisfying $b,d \geq 16^i(2m+2)k$. 
Let  $T_x$ be a tree of height  $i$ rooted at $x$.
For each $j=0,\dots, i$, let $L_j$ be the set of vertices in $T_x$ at distance $j$ from $x$. 
Let $W$ be some set of vertices disjoint from $V(T)$ and $H$ be a  bipartite graph with
bipartition $(L_i, W)$ such that  
\[\brm{e}(H)\geq \max\{d|L_i|, b|W|\}.\]
Suppose $c$ is an edge colouring of $G=T_x\cup H$ such that $c$ is rainbow on $T_x$ and
strongly proper on $H$ and that $\mathcal{C}(G)\cap V(G)=\emptyset$ and $\mathcal{C}(T_x)\cap \mathcal{C}(H)=\emptyset$.
Then there exist $0\leq q\leq i$ and some positive real $a_i=a_i(i,k)$
such that there are at least $a_i (bd)^{k-i-1+q} \brm{e}(H)$ many  rainbow $C_{2k}$'s in $G$ that contain a vertex in $L_q$.
\end{lemma}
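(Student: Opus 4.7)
The plan is to adapt the proof of Lemma~\ref{lem:c2k-count} (the $r=2$ analog), replacing Lemma~\ref{lem:path-count} by its rainbow counterpart Lemma~\ref{lem:rainbow-path-count} and exercising care to preserve the rainbow condition when paths are closed into cycles. This ``balanced-root plus rainbow-path'' route is explicitly sanctioned by the remark preceding the lemma.

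The first stage is cleanup and centroiding. Applying Lemma~\ref{lem:highmindegree} to $H$ yields $H'\subseteq H$ with $\brm{e}(H')\geq \brm{e}(H)/2$, $\delta_{L_i}(H')\geq d/4$ and $\delta_W(H')\geq b/4$. For each $w\in W\cap V(H')$ I apply Lemma~\ref{balanced-root} to $T_x$ with $S=N_{H'}(w)$ and a parameter $b^{*}$ chosen as a suitable multiple of $kr$ (to absorb the colour/vertex obstructions that will appear at the closing step). This produces an ancestor $r(w)\in L_{q(w)}$ with $0\leq q(w)\leq i-1$ such that most of $N_{H'}(w)$ lies in $V(T_{r(w)})$ and, for every child $z$ of $r(w)$, at least $b^{*}$ of $w$'s neighbors inside $V(T_{r(w)})$ lie outside $V(T_z)$. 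Pigeonholing on $q(w)$ fixes a level $q\in\{0,\dots,i-1\}$ — the one in the statement — and yields an edge-weighted subgraph $H_q\subseteq H'$ on $L_i\cup W_q$ with $\brm{e}(H_q)=\Omega_i(\brm{e}(H))$ in which every $w\in W_q$ has its remaining neighbors inside $V(T_{r(w)})\cap L_i$; a second pass of Lemma~\ref{lem:highmindegree} restores good minimum degrees on both sides.

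Next comes the counting stage. Set $p:=k-i+q\geq 1$. I apply Lemma~\ref{lem:rainbow-path-count} to $H_q$ (with its parameter $p$ replaced by $p-1$, since I want paths of length $2p-1$ from $L_i$ to $W_q$) to obtain $\Omega_{i,k}\!\bigl((bd)^{p-1}\brm{e}(H)\bigr)$ rainbow paths $P=v_1v_2\cdots v_{2p}$. For each such $P$, let $z:=r(v_{2p})\in L_q$ and let $a$ be the child of $z$ under which $v_1$ lies; by the balanced-root property, $v_{2p}$ has at least $b^{*}$ neighbors in $V(T_z)\cap L_i$ outside $V(T_a)$. Discarding those that either appear as vertices of $P$, or whose edge from $v_{2p}$ clashes in colour with the colours already used on $P$ or on the two tree paths $v_1\to z$ and $u\to z$ in $T_x$, leaves a valid choice $u$ provided $b^{*}$ is chosen large enough relative to $k$ and $r$. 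The resulting closed walk — $P$, the edge $v_{2p}u$, and the two tree paths joining $u$ and $v_1$ to $z$ — is a $C_{2k}$ containing $z\in L_q$, and is rainbow by the hypotheses $\mathcal{C}(T_x)\cap\mathcal{C}(H)=\emptyset$ and $\mathcal{C}(G)\cap V(G)=\emptyset$ together with the rainbow property of $T_x$ and the strong properness of $c$ on $H$. Dividing by the $O_k(1)$-fold multiplicity with which each $C_{2k}$ arises from such a (path, $u$) pair yields the asserted lower bound $a_i(bd)^{k-i-1+q}\brm{e}(H)$ with $a_i=a_i(i,k)>0$.

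The main obstacle, in my view, is the careful bookkeeping of rainbowness at the cycle-closing step: one must simultaneously avoid vertex repetition and avoid any of the $2k-1$ colours on $P$ and on the two tree paths from repeating on $v_{2p}u$ or hitting its endpoint $u$. Strong properness of $c$ on $H$ means distinct edges at $v_{2p}$ carry disjoint colour sets, so each forbidden colour element blocks at most one candidate $u$; thus vertex repetitions and palette clashes together forbid only $O(kr)$ of the $b^{*}$ candidates. This is where the hypothesis $b,d\geq 16^{i}(2m+2)k$ does its work: it supplies enough initial degree to drive the centroid argument, the rainbow-path lemma, and the closing step through a bounded constant-factor loss at each stage.
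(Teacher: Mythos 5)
Your proposal is correct, but it follows a genuinely different route from the one the paper writes down for this lemma. The paper proves Lemma~\ref{lem:c2k-count-hyper} by induction on the height $i$, using the Faudree--Simonovits strong/weak dichotomy: each $w\in W$ either has a \emph{dominant sector} (a child-subtree of $x$ capturing almost all of $N_H(w)$) or not; if most edges of $H$ meet strong vertices, rainbow $(2(k-i)-1)$-paths are closed through the root itself (so $q=0$, this is Claim~\ref{path-to-cycle}), and otherwise one restricts each weak $w$ to its dominant sector, pigeonholes over the children $x_j$, and applies the induction hypothesis to $T(x_j)\cup H_j$ with $b,d$ replaced by $b/16,d/16$ --- which is exactly what forces the $16^i$ in the hypothesis. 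You instead transplant the $r=2$ argument of Lemma~\ref{lem:c2k-count}: Lemma~\ref{balanced-root} applied to $N_H(w)$ with threshold on the order of $k(m+1)$ (note it is $k(m+1)$, not $kr$ --- the colours are $m$-sets) locates the branching ancestor $r(w)$ in one shot, a pigeonhole over its level fixes $q$, and Lemma~\ref{lem:rainbow-path-count} supplies the rainbow $(2p-1)$-paths with $p=k-i+q$, which close into rainbow $C_{2k}$'s through $r(w)\in L_q$ exactly as you describe; the length and exponent checks ($(2p-1)+1+2(i-q)=2k$ and $(bd)^{p-1}=(bd)^{k-i-1+q}$) both come out right. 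The paper explicitly states, in the paragraph preceding the lemma, that this alternative proof works, so you are on firm ground. What each approach buys: yours is non-inductive and more constructive, loses only an $O(i)$ factor (from the pigeonhole over levels and the cleaning) rather than a $16^i$ factor in the degrees, and hence would actually prove the lemma under a weaker degree hypothesis of the form $b,d\geq Ck^2(m+1)$ with better constants $a_i$; the paper's inductive version avoids the centroid lemma and is, by the authors' own account, closer in spirit to the original Faudree--Simonovits method. The one place where you should be more explicit is the claim of a ``bounded constant-factor loss at each stage'': the pigeonhole over the $i$ possible values of $q(w)$ costs a factor of $i$, not a constant, but the hypothesis $b,d\geq 16^i(2m+2)k$ absorbs this comfortably since $16^i$ dominates any linear function of $i$ (and for $i\leq 1$ the pigeonhole and the balanced-root cleaning are vacuous).
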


\begin{proof} 
We proceed by induction on the height $i$ of the tree. It holds vacuously for $i=0$.  For all $i\geq 1$ we prove the result by splitting the argument into two cases and only in one of the cases we use induction. It is important to point out that when $i=1$ we are in Case 1 and thus need not use the vacuous case  of $i=0$ as our induction hypothesis.   

Let us denote by $x_1, x_2, \dots, x_p$ the children of $x$ in $T_x$. For each $j\in [p]$, let $T(x_j)$ be the subtree of $T_x$
rooted at $x_j$. For each $j\in [p]$,
we define the $j$th \emph{sector} to be $S_j= L_i\cap V(T_j)$. Note that  since $T_x$ is a tree, the $S_j$'s are pairwise disjoint. For a vertex $v\in L_i$, we denote by $S(v)$ the sector that $v$ lies in.

We say that  a sector $S_j$ is \emph{dominant} for a vertex $w\in W$ if 
\[|N_H(w)\cap S_j|> \max\left\{|N_H(w)| -2km,\frac{|N_H(w)|}{2}\right\}.\]
We say that $w\in W$ is \emph{strong} if it has no dominant sector and \emph{weak} otherwise.  Note that by our definition if $w\in W$ has a dominant sector
then there is only one such dominant sector for $w$.

Let $W_s$ be the set of strong vertices and $W_w$ be the set of weak vertices, respectively. Let $H_s$ denote the subgraph of $H$ induced by $L_i$ and $W_s$, $H_w$ denote the subgraph of $H$ induced by $L_i$ and $W_w$. The argument splits into two cases, depending whether 
the majority of the edges of $H$  lie in $H_s$ or in $H_w$. In the first case, we build the necessary number of
rainbow $2k$-cycles going through the vertex $x$ (so in the outcome of the theorem  we have $j=0$ as $x\in L_0$). In the second case we use induction to find  
rainbow $2k$-cycles in $T(x_j)$'s for many $j$. \medskip

\textbf{Case 1.}  \begin{equation}\label{edge-conditions} e(H_s)\geq \brm{e}(H)/2.\end{equation}

Let $d_{avg}(L_i)$ and $d_{avg}(W_s)$ denote the average degrees in $H$ for vertices in $L_i$ and $W_s$ respectively.
Then by \eqref{edge-conditions}, we have \(d_{avg}(L_i)\geq \frac{d}{2}, \quad d_{avg}(W_s)\geq \frac{b}{2}.\) By Lemma~\ref{lem:highmindegree}, there is a subgraph $H'$ of $H_s$ with bipartition $(A,B)$, $A\subseteq L_i$, $B\subseteq W_s$, such that
\begin{equation} \label{H'-bounds}
\brm{e}(H') \geq \frac{\brm{e}(H_s)}{2}\geq \frac{\brm{e}(H)}{4},\quad \delta_{A}(H') \geq \frac{d_{avg}(L_i)}{4} \geq  \frac{d}{8}, \quad \delta_{B}(H')\geq \frac{ d_{avg}(W_i)}{4} \geq \frac{b}{8}.
\end{equation}
Since $b,d\geq 16^i(2m+2)k$, clearly $\frac{b}{8}, \frac{d}{8}\geq (4m+4)k\geq (4m+4)(k-i-1)$.
Since $c$ is strongly proper on $H$, by Lemma \ref{lem:rainbow-path-count} with $p=k-i-1$,  the number of rainbow paths 
of length $2(k-i)-1$ in $H'$ is at least
\[\frac{1}{2^{2(k-i-1)}} e(H')(\delta_A(H')\delta_B(H'))^{k-i-1}\geq \frac{1}{2^{5(k-i-1)+2}} \brm{e}(H) (bd)^{k-i-1}.\]

\begin{claim} \label{path-to-cycle}
Every rainbow path $P=v_1v_2\dots v_{2(k-i)}$ of length $2(k-i)-1$ extends to a rainbow $C_{2k}$ in $G$ that contains $x$.
\end{claim}
{\it Proof of Claim.} By symmetry, we may assume that $v_1\in A, v_{2(k-i)}\in B$. For convenience, let $t=2(k-i)$.
It suffices to show that there exists $u\in N_H(v_t)$ (note that $u$ lies in $L_i$ but does not necessarily lie in $A$) 
such $P\cup v_t u$ is a rainbow path in $H$ and that $S(v_1)\neq S(u)$.
Indeed, suppose such $u$ exists. Then since $S(v_1)\neq S(u)$ the unique path $Q_1$ in $T_x$ from $v_1$ to $x$ and
the unique path $Q_2$ from $u$ to $x$ intersect only at $x$. Since $T_x$ is rainbow, $Q_1\cup Q_2$ is rainbow.
By our assumption, $\mathcal{C}(T_x)\cap \mathcal{C}(H)=\emptyset$. Thus, $P, Q_1, Q_2$ together form a rainbow $C_{2k}$ in $G$. 

Now we show that such $u$ exists. 
Since $v_t\in W_s$, by definition, $|N_H(v_t)\setminus S(v_1)|\geq 2km$.
Since $\varphi$ is a strongly proper edge-colouring using $m$-sets, $\{w\cup \varphi(w): w\in N_H (v_t)\setminus S(v_1)\}$ 
is an $(m+1)$-uniform matching of size $|N_H(v_t)\setminus S(v_1)|\geq 2km$.
Since clearly $|V(P)\cup \mathcal{C}(P)|<2km$, there exists $u\in N_H(v_t)\setminus S(v_1)$ such that 
$(w\cup \varphi(w))\cap (V(P)\cup \mathcal{C}(P))=\emptyset$. It is easy to see that $P\cup v_tu$ is a rainbow path in $H$.
Also, $u\notin S(v_1)$ by choice. \qed

\medskip

\textbf{Case 2:} $e(H_w)\geq \brm{e}(H)/2$.

\medskip
In this case, we have
\begin{equation}\label{hw-bounds}
e(H_w)\geq \frac{d}{2}|L_i|, \quad e(H_w)\geq \frac{b}{2}|W|.
\end{equation}
Recall that $x_1,\dots, x_p$ are the children of the root $x$ and for each $j\in [p]$, $S_j=V(T(x_j))\cap L_i$.  
For each $j\in [p]$, let $W_j$ be the set of  vertices in $W_w$ whose dominant sector is $S_j$.  Now we run the following ``cleaning" procedure. For every vertex $y\in W_w$ we only keep those edges in $H_w$ joining $y$ to vertices in its dominant sector. Let $H''$ denote the resulting subgraph of $H_w$.
By the definition of  $W_w$, every vertex $y\in W_w$ satisfies
\[d_{H''}(y)\geq |N_H(y)| - 2km.\]

Hence,
\[\brm{e}(H'') \geq e(H_w)  - 2km  |W_w|.\]

Since $c\geq 8km$, by \eqref{hw-bounds} $e(H_w)\geq 4km|W|$. Therefore
\begin{equation} \label{H''-bounds}
e(H'')\geq \frac{1}{2} e(H_w)\geq \frac{1}{4}\brm{e}(H).
\end{equation}

 For each $j\in [p]$, let $H_j$ denote the subgraph of $H''$ induced by $S_j\cup W_j$.
Note that the $H_j$'s are pairwise vertex-disjoint.  We want to  apply induction to those $T(X_j)\cup H_j$ where $H_j$ is relatively dense
from both partite sets. For that purpose we partition the index set $[p]$ as follows. Let
\[\cI_1=\{j\in [p]:  e(H_j)\leq \frac{d}{16}|S_j|\}, \quad 
\cI_2=\{i\in [p]: e(H_j) \leq \frac{b}{16}|W_j|\}, \quad
\cI_3=[p]\setminus (\cI_1\cup \cI_2).\]
By the definition and disjointness of  the $H_j$'s, we have 
\[\sum_{j\in \cI_1\cup \cI_2} e(H_j)\leq \frac{d}{16} |L_i| +\frac{b}{16}|W|\leq \frac{1}{8}\brm{e}(H).\]

Hence,
\begin{equation} \label{I3-bounds}
\sum_{j\in \cI_3} e(H_j)\geq \frac{1}{8}\brm{e}(H).
\end{equation}

For each $j\in \cI_3$, by definition, we have $e(H_j)\geq \frac{d}{16}|S_j|$ and $e(H_j)\geq \frac{b}{16}|W_j|$.
Since $T(x_j)$ has height $i-1$ and $\frac{d}{16}, \frac{b}{16}>(16)^{i-1}(2m+2)k$, by the induction hypothesis with $d,b$ replaced with
$\frac{d}{16}$ and $\frac{b}{16}$ respectively, there exists $q=q(j)$ such that the number of rainbow $2k$-cycles in $T(x_j)\cup H_j$ that
contain a vertex in level $q(j)$ of $T(x_j)$ 
is at least 
\[a_{i-1} \left(\frac{bd}{16^2}\right)^{k-(i-1)-1+q(j)} e(H_j)=a_{i-1} \left(\frac{bd}{16^2}\right)^{k-i+q(j)} e(H_j).\]

For each $t=0,\dots, i-2$, let $\cI_{3,t}=\{j\in \cI_3: q(j)=t\}$. By the pigeonhole principle, there exists  $t\in \{0,\dots, i-2\}$,
such that
\[\sum_{j\in \cI_{3,t}} e(H_j)\geq \frac{1}{i-1}\sum_{j\in \cI_3} e(H_j)\geq \frac{1}{8k} \brm{e}(H).\]

Let us fix such a $t$. By our earlier discussion and the fact that vertices in level $t$ of each $T(x_j)$ for $j\in \cI_{3,t}$
lie in level $t+1$ of $T_x$, the number of rainbow $2k$-cycles in $G$ that contain a vertex from  $L_{t+1}$ is at least
\[\sum_{j\in \cI_{3,t} }a_{i-1} \left(\frac{bd}{16^2}\right)^{k-i+t} e(H_j)= a_i (bd)^{k-i-1+(t+1)} \brm{e}(H),  \]
with the choice of $a_i= \frac{a_{i-1}}{2^{8(k-i+l) +3}k}$. Hence, in this case the lemma holds for $q=t+1$.
\end{proof}


\subsection{Proof of the $r\geq 3$ case of Theorem \ref{thm:main}}

We are finally ready to prove the supersaturation statement of $C^{(r)}_{2k}$ for
linear $r$-partite $r$-graphs $G$ that have a $2$-projection on two parts $A,B$ that is almost regular and have number of edges exactly $\Theta(|A\cup B|^{1+1/k})$. By Corollary~\ref{cor:linear-cycle-reduction} this would imply Theorem~\ref{thm:main} for all $r\geq 3$. For this we first define an adequate partition $V(G)$ into $S_1,\dots, S_k$.
From each vertex $x$ we define the maximal rainbow tree $T_x$ rooted at $x$ relative to the partition $(S_1,\dots, S_k)$. Then we apply
Lemma \ref{lem:c2k-count-hyper} to find many rainbow $2k$-cycles containing a vertex from some fixed level of $T_x$, which corresponds to linear
$2k$-cycles in $G$. Summing over all $x$ and
eliminating overcount, we get a lower bound on the number of $2k$-cycles in $G$.

\begin{theorem} \label{thm:almost-regular-hyper} Let $k, r\geq 2$ be integers. 
Let $D$ be a constant such that $D\geq 2^{r+1}rk^r(16)^k$. There exist $n_0$ such that if $G$ is a linear $r$-partite $r$-graph with an $r$-partition
$A_1,\dots, A_r$  such that  $|A_1\cup A_2| = n \geq n_0$ and  for every $v\in A_1\cup A_2$,
\[Dn^{1/k}\leq |L_G(v)| \leq \lambda D n^{1/k},\] where $\lambda\geq 1$ is a real, then there exists $\alpha =\alpha (k,r, \lambda)$ such that  $t_{C_{2k}^{(r)}}(G)\geq \alpha n^2$.
\end{theorem}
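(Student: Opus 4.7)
My plan is to follow the blueprint of Theorem~\ref{thm:almost-regular-count} (the $r = 2$ analogue), using the hypergraph tools developed in Section~\ref{sec:hypergraphs}. First I apply Lemma~\ref{lem:splitting} with $\gamma = 1/k$ to partition $V(G) = S_1 \sqcup \cdots \sqcup S_k$ so that for every $v \in A_1 \cup A_2$ and every $i \in [k]$,
\[
|L_G(v)|_{S_i}| \geq D' n^{1/k}, \qquad \text{where } D' := \frac{D}{2k^{r-1}}.
\]
The hypothesis $D \geq 2^{r+1}rk^r \cdot 16^k$ is calibrated precisely so that $D'$ comfortably exceeds the threshold $16^{k-1}(2r-2)k$ required by Lemma~\ref{lem:c2k-count-hyper} in the regime $i \leq k-1$, $m = r-2$.

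For each $x \in A_1 \cup A_2$, I build the maximal rainbow rooted tree $T_x$ of Definition~\ref{def:expansiontree} relative to this partition, with levels $L_0 = \{x\}, L_1, L_2, \dots$ alternating between $A_1$ and $A_2$. I then identify a stopping level $h = h(x) \leq k-1$ by a BFS-type criterion analogous to the one used in Theorem~\ref{thm:almost-regular-count}. Next I let $W$ be a set of new vertices outside $V(T_x) \cup \mathcal{C}(T_x)$ that are reachable from $L_h$ via $G$-edges whose $(r-2)$-tails lie in $S_{h+1}$, and form the bipartite graph $H$ on $L_h \cup W$ by $2$-projecting these edges and coloring each edge of $H$ by its $(r-2)$-tail. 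Linearity of $G$ makes this coloring strongly proper on $H$; by construction $\mathcal{C}(T_x) \subseteq S_1 \cup \cdots \cup S_h$ and $\mathcal{C}(H) \subseteq S_{h+1}$ are disjoint from each other, and being contained in $A_3 \cup \cdots \cup A_r$ they are also disjoint from $V(T_x \cup H) \subseteq A_1 \cup A_2$. Hence all hypotheses of Lemma~\ref{lem:c2k-count-hyper} are met.

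Applying Lemma~\ref{lem:c2k-count-hyper} with $i = h$, $m = r-2$, $d$ of order $D' n^{1/k}$ (the average degree on the $L_h$ side, inherited from the Splitting Lemma), and $b$ a large constant (the average degree on the $W$ side, guaranteed by the stopping condition together with a Lemma~\ref{lem:highmindegree}-type pruning of low-degree $W$-vertices), I obtain a level index $q = q(x) \in \{0, 1, \dots, h\}$ and $\Omega(n^{1+q/k})$ rainbow $2k$-cycles in $T_x \cup H$ through a vertex of $L_q$. Each such rainbow cycle, together with its colors, forms a copy of $C_{2k}^{(r)}$ in $G$. Summing over the $\Theta(n)$ roots $x$ and pigeonholing on the pair $(h(x), q(x))$ produces, for some fixed $(h^*, q^*)$, at least $\Omega(n^{2 + q^*/k})$ copies of $C_{2k}^{(r)}$ counted with multiplicity. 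Each such copy is overcounted by at most $2k \cdot (\lambda D n^{1/k})^{q^*} = O(n^{q^*/k})$: one picks a designated vertex $v$ on the cycle (at most $2k$ choices) and then a $q^*$-step tree-path in $G$ back to $x$ (at most $(\lambda D n^{1/k})^{q^*}$ choices). Dividing yields the desired $\Omega(n^2)$ distinct copies.

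The main obstacle is the growth analysis of $T_x$: unlike the clean $n^{1/k}$-factor BFS expansion available when $r = 2$, the matching-based construction of Definition~\ref{def:expansiontree} can be bottlenecked by vertex concentration inside the link hypergraphs, so the analysis of $|L_i|$ and the choice of stopping level $h$ require genuine care (likely via a dichotomy: either the tree grows as expected, or the concentration regime supplies many cycles by a direct argument on the $2$-projection). A secondary technical point is controlling the size of $W$ so that its average degree in $H$ meets the threshold demanded by Lemma~\ref{lem:c2k-count-hyper} while retaining a positive fraction of the edges, which I handle by a Lemma~\ref{lem:highmindegree}-style pruning.
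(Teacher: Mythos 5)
Your overall architecture matches the paper's: the Splitting Lemma with $\gamma=1/k$, the maximal rainbow tree $T_x$, a BFS-type stopping level $h\le k-1$, an auxiliary coloured bipartite graph fed into Lemma~\ref{lem:c2k-count-hyper} with $m=r-2$, and finally pigeonholing over the roots and dividing by the overcount $2k(\lambda Dn^{1/k})^{q^*}$. The genuine gap is precisely the step you defer to a ``dichotomy'': the construction of $W$ and the verification that the average degree of $H$ on the $W$-side is a large constant. If $W$ is the set of $A_2$-endpoints of the relevant edges leaving $L_h$ (your ``$2$-projection''), the stopping condition gives no control over $|W|$. The levels of Definition~\ref{def:expansiontree} record the size of a \emph{maximum matching} in the link hypergraph $F=\bigcup_{v\in L_h}L_G(v)|_{S_{h+1}}$, and $F$ can have matching number $1$ while its $A_2$-shadow is as large as $e(F)$ itself --- for instance when all links pass through a single vertex of $A_3\cap S_{h+1}$, which linearity permits since only vertices of $A_1\cup A_2$ carry degree hypotheses. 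In that regime every $w\in W$ has degree $1$ in your $H$, so the average degree on the $W$-side is $1$, and a Lemma~\ref{lem:highmindegree}-type pruning cannot raise an average that is already too low; it only trims below-average vertices when the average is high.

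The paper closes this hole with no dichotomy: it takes $W$ to be a \emph{minimum vertex cover} of $F$, so that $|W|\le\tau(F)\le(r-1)\nu(F)=(r-1)|L_{h+1}|\le(r-1)n^{(h+1)/k}$ by the stopping condition, which immediately yields $e(F)/|W|\ge D/(2(r-1)k^{r-1})$, a large constant. It then applies Lemma~\ref{lem:cross-cut} to pass to $W'\subseteq W$ and $F'\subseteq F$ with $e(F')\ge\frac{r-1}{2^{r-1}}e(F)$ and every edge of $F'$ meeting $W'$ in exactly one vertex, so that the bipartite graph $H_x$ between $L_h$ and $W'$ and its strongly proper colouring by the residual $(r-2)$-sets are well defined (linearity guarantees distinct edges of $F'$ yield distinct edges of $H_x$). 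Note that $W'$ need not lie in $A_2$ at all, so your assertion that $V(T_x\cup H)\subseteq A_1\cup A_2$ is an artifact of the wrong choice of $W$; disjointness of $\mathcal{C}(H_x)$ from $\mathcal{C}(T')$ is instead obtained because the former lies in $S_{h+1}$ and the latter in $S_1\cup\dots\cup S_h$. With $W$ replaced by this vertex cover and the cross-cut step inserted, the remainder of your argument goes through as you describe.
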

\begin{proof} The choice of $\alpha$ will be specified at the end of the proof.
We will choose $n_0$ be large enough so that $n_0\geq n_{\ref{lem:splitting}}(D,k,r,1/k)$, where $n_{\ref{lem:splitting}}$ is specified in Lemma \ref{lem:splitting}.
Let $S_1, S_2,\dots, S_k$ be a partition obtained by applying Lemma~\ref{lem:splitting} to $G$. In particular, for each $x\in A_1\cup A_2$ and $j\in [k]$, 
we have
\begin{equation} \label{deg-in-each-level}
|L_G(x)|_{S_j}|\geq \frac{Dn^{1/k}}{2k^{r-1}}.
\end{equation}

For each $x\in A_1\cup A_2$, let $T_x$ be a maximal rainbow tree of height $k$ rooted at $x$ relative to the partition $S_1,\dots, S_k$, as described
in Definition \ref{def:expansiontree}. The proof is similar to that of Theorem \ref{thm:almost-regular-count}. 
For each $x$, we find an $i\in [k]$ such that

\begin{itemize} \item [(i)] there exists some set $W'$ and  a bipartite subgraph $H_x$ induced by $L_i$ and $W'$ which
has high average degree from both partite sets.
\item[(ii)] The colouring $c$ on $T_x$ is extended to also include a strongly proper edge-colouring of $H_x$ such that
$\mathcal{C}(H_x)\cap \mathcal{C}(T_x)= \emptyset$.
\end{itemize}
We then use Lemma \ref{lem:c2k-count-hyper} to find many rainbow $2k$-cycles that contain some vertex in some fixed level of $T_x$.
Below are the details. 

Fix $x$ and write $T$ for $T_x$.  For $j=0,\dots, k$, let $L_j$ be defined as in Definition \ref{def:expansiontree}
and let $\varphi$ be the assigned edge-colouring of $T$ given in Definition \ref{def:expansiontree}.
Since $|L_1|\geq Dn^{1/k}>n^{1/k}$ and $|L_k|\leq n$, there exists a smallest $i\in [k-1]$ such that for all $1\leq j\leq i $, $|L_j| >  n^{j/k}$ but 
 \[|L_{i+1}| \leq n^{(i+1)/k}.\]
 
Let $T'$ be the subtree of $T$  induced by $\bigcup_{j=0}^i L_j$.
Let $F=\bigcup_{v\in L_i}{L_G(v)|_{ S_{i+1}}}$.  Since $S_{i+1}$ is disjoint from $S_1\cup\dots\cup S_i$ 
and since $L_i\subseteq A_1$ where $A_1$ is a partite set in an $r$-partition of $G$, $V(F)\cap V(T')=\emptyset$.
 By the construction of $T$, $|L_{i+1}|$ is equal to the size of a maximum matching in $F$. Since $F$ is an $(r-1)$-graph,
 we have   $\tau(F)\leq (r-1)\alpha'(F)$,
 where $\tau(F)$ and $\alpha'(F)$ denote the vertex cover number and matching number of $F$, respectively.
 Let $W$ be a minimum vertex cover of $F$. Then
\[|W| \leq (r-1)|L_{i+1}| \leq (r-1)n^{(i+1)/k} .\]
By Lemma \ref{lem:cross-cut}, there exist $W'\subseteq W$ and $F'\subseteq F$ such that

\[ e(F')\geq \frac{r-1}{2^{r-1}} e(F) \mbox{ and } \forall e\in e(F') \, |e\cap W'|=1.\]

 We define a bipartite graph $H_x$ between $L_i$ and $W'$ and extend the edge-colouring $\varphi$ restricted on $T'$ to an edge-colouring
 of $T'\cup H_x$ as follows.
 We go through the edges of $F'$ one by one.
 For each $e\in E(F')$, since $G$ is linear, there is a unique $v\in L_i$ such that $v\cup e\in E(G)$. Also by our definition of $F'$,
 $e\cap W'$ has exactly one vertex $w$. We include $vw$ in $H_x$ and let $\varphi(vw)= e\setminus\{w\}$. 
 By the linearity of $G$ and our discussion so far, each edge of $F'$ yields a different edge of $H_x$. 
 There is a bijection between $E(F')$ and $E(H_x)$. 
Moreover, $\mathcal{C}(H_x)\cap \mathcal{C}(T') = \emptyset$, since colours used on $H_x$ are $(r-2)$-sets in $S_{i+1}$ while
$\mathcal{C}(T')\subseteq S_1\cup \dots  \cup S_i$.

Since $G$ is linear and $r\geq 3$, $\forall v,v'\in L_i$ we have $L_G(v)|_{S_{i+1}}\cap L_G(v')|_{S_{i+1}}=\emptyset$. By \eqref{deg-in-each-level},
\[e(F)=\sum_{v\in L_i} |L_G(v)\cap S_{i+1}| \geq \frac{D n^{1 /k}}{2 k^{r-1}} |L_i|.\]

Hence, we have
\begin{equation}\label{eq:Hx-bound}
e(H_x)=e(F')\geq \frac{r-1}{2^{r-1}}e(F)\geq \frac{D(r-1)}{2^r k^{r-1}}n^{1/k}|L_i|.
\end{equation}

Also, by our choice of $i$, $|L_{i+1}|\leq n^{1/k}|L_i|$. Recall also that $|W'|\leq |W|\leq (r-1)|L_{i+1}|$. Hence,

\begin{equation} 
e(H_x)\geq  \frac{D(r-1)}{2^rk^{r-1}}|L_{i+1}|\geq \frac{D}{2^rk^{r-1}} |W|\geq\frac{D}{2^rk^{r-1}} |W'|.
\end{equation}

Let  $b=\frac{D}{2^rk^{r-1}} $ and  $d=\frac{D(r-1)}{2^r k^{r-1}}n^{1/k}$.
Since $D\geq 2^{r+1}rk^r(16)^k$,
we have  $d>b \geq  (2(r-2) +2) k (16)^k$.  So $T'$ and $H_x$ satisfy 
the conditions of Lemma~\ref{lem:c2k-count-hyper} with constants $b, d$ and $m= r-2$.
By Lemma \ref{lem:c2k-count-hyper}, there exists some $q=q(x)$ with $0\leq q\leq i$ and some  
$a_i=a_i(i,k)>0$ such that there are at least 
\[a_i(bd)^{k-i-1+q} e(H_x)\]
many rainbow $C_{2k}$'s in $T'\cup H_x$ that contain some vertex in level $L_q$ of $T'$. 
Now, $|L_i|\geq n^{\frac{i}{k}}$ by definition, $e(H_x)\geq \Omega(n^{\frac{i+1}{k}})$ by \eqref{eq:Hx-bound}. Also, $d=\Omega(n^{\frac{1}{k}})$. Hence,
the number of rainbow $C_{2k}$'s in $T'\cup H_x$ that contain some vertex in $L_q$ is at least
\[\beta n^{\frac{k-i-1+q}{k}}\cdot n^{\frac{i+1}{k}}=\beta n^{\frac{k+q}{k}},\]
for some $\beta=\beta(k,r)>0$.
So in $G$ there are at least $\beta n^{\frac{k+q}{k}}$ different linear $2k$-cycles
each of whose skeletons contains some vertex in $L_q$.

For each $t\in [k-1]$, let $S_t=\{x\in V(G)|\, q(x)=t\}$. By the pigeonhole principle, for some $t\in [k-1]$, $|S_t|\geq n/(k-1)$.
Let us fix such a $t$. Let $M$ denote the number of triples $(C,x,y)$, where $x\in S_t$, $C$ is a linear $2k$-cycle in $G$
whose skeleton contains a vertex in $L_t(T_x)$ and $y$ is a vertex on the skeleton of $C$ that lies in $L_t(T_x)$. Let $\mu$ denote the number of different linear
$2k$-cycles $C$ in $G$ that are involved in these triples.  By our discussion above, for each $x\in S_t$,
there are at least $\beta n^{\frac{k+q}{k}}$ different $C$. For each such $C$ there is at least one $y$. So
\begin{equation} \label{eq:M-lower}
M\geq |S_t|\beta n^{\frac{k+t}{k}}>(\beta/k) n^{2+\frac{t}{k}}.
\end{equation}

On the other hand, for each of the $\mu$ linear $2k$-cycles $C$ involved, there are at most $2k$ different choices of $y$.
For fixed $y$, there are at most $(\lambda D n^{1/k})^t$ choices of $x$ since such
$x$ is at distance at most $t$ from $y$ in the $(1,2)$-projection $P_{1,2}(G)$ of $G$, which has maximum degree at most $\lambda D n^{1/k}$. So,
\begin{equation} \label{eq:M-upper}
M\leq \mu (2k) (\lambda D n^{1/k})^t.
\end{equation}
Combining \eqref{eq:M-lower} and \eqref{eq:M-upper} and solving for $\mu$, we get
\[\mu\geq \frac{\beta}{2k^2 (\lambda D)^t} n^2.\]
Let $\alpha=\frac{\beta}{2k^2 (\lambda D)^k}$. Then $\alpha$ is a function of $k,r,\lambda$ and we have
$t_{C^{(r)}_{2k}}(G)\geq \mu\geq \alpha n^2$.
\end{proof}

We are now ready to prove the $r\geq 3$ case of Theorem \ref{thm:main}.

\medskip

{\bf Proof of the $r\geq 3$ case of Theorem \ref{thm:main}:} 
First note that Theorem \ref{thm:almost-regular-hyper} can be rephrased as saying that if
$G$ is linear $r$-partite $r$-graph that has a $2$-projection $P$ on at least $m\geq n_0$ vertices such that $Dm^{1/k}\leq \delta(P)\leq \Delta(P)\leq \lambda Dm^{1/k}$
then $t_{C^{(r)}_{2k}}(G)\geq \alpha m^2$.
The statement holds  as long as $D\geq 2^{r+1}rk^r(16)^k$,
$\lambda\geq 1$, and $m\geq n_0$. To apply Corollary \ref{cor:linear-cycle-reduction}, 
we set 
\[D=\max\{2^{r+1}rk^r(16)^k, m_k\}, \lambda=q_kD, \mbox{ and } M=\max\{n_0, m_k\},\]
where $m_k, q_k$ are as  given in Corollary \ref{cor:linear-cycle-reduction}. 
The claim follows readily from Corollary \ref{cor:linear-cycle-reduction}. \qed


\section{Concluding remarks}\label{sec:conclusion}
 
First, we say a few words about the difference between our proofs between the $r=2$ and the $r\geq 3$ cases for almost regular host graphs.
The one for $r=2$ uses Lemma \ref{balanced-root} and the one for $r\geq 3$ uses induction on the height the maximal rainbow tree.
As we pointed out both proofs work for both cases. We choose to present one for each to illustrate both methods.
The one that avoids induction potentially could be applied in other settings such as when considering odd linear cycles.

Next, we would like to point out that the reduction to proving the supersaturation of $C_{2k}$ for $n$-vertex host graphs
$G$ with density exactly at $\Theta(n^{1+1/k})$ is crucial to the proof of our general theorem. Using the BFS approach, one
can indeed find many copies of $C_{2k}$. However, the approach works perfectly only when $G$ has density $\Theta(n^{1+1/k})$.
For denser $G$, one can still get a bound, but the bound becomes worse and worse compared to the optimal $c(\frac{\brm{e}(G)}{\brm{v}(G)})^{2k}$
as $G$ gets denser. A reason for that is the subgraph of $G$ induced by consecutive levels of a BFS tree is now much denser
and no longer resembles a tree structure. If we only use the BFS structure to construct our $C_{2k}$'s, we will lose count on many
$C_{2k}$'s. It might be possible to make the BFS approach work directly for dense $G$ without a reduction. 
But the analysis becomes exceedingly complicated.

For all integers $k,p\geq 2$, the theta graph  $\Theta_{p,k}$ is the graph consisting of $p$ many internally disjoint paths of length $k$ sharing the same endpoints. It was shown by Faudree and Simonovits \cite{FS} that $ex(n,\Theta_{p,k})=O(n^{1+1/k})$.
The method of our paper can be used to establish the  supersaturation of  the $r$-expansion $\Theta_{p,k}^{(r)}$ (where $r\geq 2$) of $\Theta_{p,k}$
in linear $r$-graphs. When $r=2$ this establishes the truth of Conjecture~\ref{bipartite-supersaturation} for $H=\Theta_{p,k}$ with $\alpha=\alpha'=1-1/k$. Again, the lower bound is tight up to a multiplicative constant, obtained by taking a random graph of an almost complete Steiner system.

It would be very interesting to establish the supersaturation of odd linear cycles in linear $r$-graphs, for $r\geq 3$. Toward this end, in \cite{CGJ} it is shown that when $r\geq 3$ we have $ex_l(n,C^{(r)}_{2k+1})=O(n^{1+1/k})$, which
is very different from the $2$-uniform case where for all sufficiently large $n$ it is known that $ex(n,C_{2k+1})=\lceil \frac{n}{2}\rceil\lfloor \frac{n}{2}\rfloor$.
The proof of this theorem is much more involved than its counterpart
for even linear cycles. It is unclear if a similar supersaturation statement as Theorem \ref{thm:main}  holds
for $C^{(r)}_{2k+1}$. At least our methods don't readily give this. We raise this as an open question.

\begin{question}
Let $k,r$ be integers where $k\geq 2, r\geq 3$. Do there exist positive constants $C$ and $c$ depending
only on $k$ and $r$ such that every $n$-vertex linear $r$-graph $G$ with $\brm{e}(G)\geq Cn^{1+1/k}$ contains
at least $c \left(\frac{\brm{e}(G)}{\brm{v}(G)}\right)^{2k+1}$ copies of $C^{(r)}_{2k+1}$?
\end{question}

Very recently, Balogh, Narayanan and Skokan~\cite{BNS} obtained a balanced supersaturation 
result for  linear cycles of all lengths in general $r$-graphs. Note that this is a different setting from ours, as  in our case host graphs are linear, and hence are sparse, while they are working with dense ones. As Morris and Saxton did for cycles in graphs, Balogh et al. used their supersaturation result to obtain a bound 
on the number of of $n$-vertex $C_m^{(r)}$-free  $r$-graphs. It would be interesting to  obtain such a balanced version of supersaturation for even linear cycles in linear $r$-graphs as well and hence generalize the result of Morris and Saxton. Our methods  a priori do not give such strong supersaturation.

Another problem worth exploring is to sharpen the result of Balogh et al. Unlike for $2$-uniform even cycles, when $r\geq 3$
the usual Tur\'an number of the $r$-uniform linear cycle $C^{(r)}_m$ has been completely determined in \cite{FJ} and \cite{KMV} 
for all sufficiently large $n$. Asymptotically, $ex(n, C^{(r)}_m)\sim \lfloor \frac{m-1}{2}\rfloor
\binom{n}{r-1}$. The supersaturation result of Balogh et al. applies to $n$-vertex $r$-graphs $G$ with $\brm{e}(G)\geq C\cdot ex(n,C^{(r)}_m)$ for a sufficiently large constant $C$.
It is possible that one can establish a similar statement for all $n$-vertex $r$-graphs $G$ with $\brm{e}(G)\geq (1+o(1)) ex(n,C^{(r)}_m)$ and hence
sharpen the bound on the number of $n$-vertex $C^{(r)}_m$-free $r$-graphs. As a supersaturation problem on its own without the application
to the count of $C^{(r)}_m$-free graphs,  it would also be interesting to at least  establish supersaturation 
of $C^{(r)}_m$ in all $n$-vertex $r$-graphs $G$ with $\brm{e}(G)\geq (1+o(1))ex(n,C^{(r)}_m)$.


\begin{thebibliography}{99}

\bibitem{BNS} J. Balogh, B. Narayanan, J. Skokan, \emph{The number of hypergraphs
without linear cycles}, arXiv:1706.01207v1.

\bibitem{BES} W.G. Brown, P. Erd\H{o}s, V. S\'os, On the existence of triangulated spheres in $3$-graphs
and related problems, \emph{Period. Math. Hungaria} \textbf{3} (1973), 221-228.

\bibitem{BS} A. Bondy, M. Simonovits, Cycles of even length in graphs, \emph{J. Combin. Theory Ser. B}
\textbf{16} (1974), 97-105.


\bibitem{BJ} B. Bukh, Z. Jiang, A bound on the number of edges in graphs without an even cycle,
\emph{Combin. Probab. Comput.}, to appear.

\bibitem{CGJ} C. Collier-Cartaino, N. Graber, T. Jiang,  Linear Tur\'an numbers of linear cycles and
cycle-complete Ramsey numbers, \emph{Combin. Probab. Comput.}, to appear.
 




\bibitem{ES} P. Erd\H{o}s, M. Simonovits, A limit theorem in graph theory, \emph{Studia Sci. Math. Hungar. }\textbf{1} (1966), 51--57.

\bibitem{ES-almost-regular} P. Erd\H{o}s, M. Simonovits, Some extremal problems in graph theory, Combinatorial Theory and Its Applications,
I. (Proc. Colloq. Baltaonf\"ured, 1969), North Holland, Amsterdam, 1970, 377--390.


\bibitem{EStone} P. Erd\H{o}s, H. Stone,  On the structure of linear graphs, \emph{Bull. Amer. Math. Soc. } \textbf{52} (1946), 1087--1091.


\bibitem{cube} P. Erd\H{o}s, M. Simonovits,  Some extremal problems in graph theory,
\emph{Combinatorial Theory and Its Applications 1} (Proc. Colloq. Balatonf\"ured, 1969),
North Holland, Amsterdam, 1970, 370-390.

\bibitem{ES-cube-supersat} P. Erd\H{o}s, M. Simonovits, Cube-supersaturated graphs and related problems,
Progress in graph theory (Waterloo, Ont., 1982), pp 203-218, Academic Press, Toronto, ON, 1984.

\bibitem{EGM} B. Ergemlidze, E. Gy\"ori, A. Methuku, Asymptotics for Tur\'an numbers of cycles
in 3-uniform linear hypergraphs, arXiv: 1705.03561v2.

\bibitem{FS} R. Faudree, M. Simonovits, On a class of degenerate
extremal graph problems, \emph{Combinatorica}, \textbf{3}(1983),
83--93.

\bibitem{FS-cycle} R. Faudree, M. Simonovits, Cycle-supersaturated graphs, in preparation.


\bibitem{FJ} Z. F\"uredi, T. Jiang, Hypergraph Tur\'an numbers of linear cycles, \emph{J. Combin. Theory Ser. A} \textbf{123} (2014), 252-270.


\bibitem{FS2} Z. F\"uredi, M. Simonovits, The history of the degenerate (bipartite) extremal graph problems, 
\emph{Erd\H{o}s centennial, Bolyai Soc. Math. Stud. } \textbf{25}, 169-264, J\'anos  Bolyai Math. Soc.,
Budapest, 2013. See also arXiv:1306.5167.


\bibitem{JLR} S. Janson, T. Luczak, A. Rucinski, Random Graphs, John Wiley \& Sons, Inc, 2000.


\bibitem{keevash-survey} P. Keevash, Hypergraph Turan Problems, 
Surveys in Combinatorics, Cambridge University Press, 2011, 83--140.

\bibitem{keevash} P. Keevash, The existence of designs, arXiv:1401.3665.


\bibitem{KMV} A. Kostochka, D. Mubayi, J. Verstra\"ete, Tur\'an problems and shadows I: Paths and cycles, \emph{J. Combin. Theory Ser. A}
\textbf{129} (2015), 57-79.


\bibitem{rodl} R\H{o}dl, V.  On a packing and covering problem, European Journal of Combinatorics 6, (1985), 69--78.
\bibitem{MS} R. Morris, D. Saxton, The number of $C_{2l}$-free graphs, \emph{Advances in Math.} \textbf{298} (2016), 534-580.

\bibitem{pikhurko} O. Pikhurko, A note on the Tur\'an function of even cycles, \emph{Proc. Amer. Math. Soc.} 
\textbf{140} no. 11 (2012), 3687-3692.

\bibitem{RS} I. Ruzsa, E. Szemer\'edi, Triples systems with no six points carrying three triangles,
in Combinatorics Keszthely, 1976, Colloq. Math. Soc. J. Bolyai \textbf{18}, Vol II, 939-945.

\bibitem{Sidorenko} A. Sidorenko,  Inequalities for functionals generated by bipartite graphs (Russian),  \emph{Diskret. Mat. }
\textbf{3} (1991), no 3., 50-65. English transl. Discrete Math. Appl. \textbf{2} (1992), no 5, 489-504.


\bibitem{verstraete} J. Verstra\"ete, On arithmetic progressions of cycle lengths in graphs,
\emph{Combin. Probab. Comput.} \textbf{9} number 4(2000), 369-373.


\bibitem{wilson1}  R.Wilson,  An existence theory for pairwise balanced designs I. Composition theorems
and morphisms, \emph{J. Combinatorial Theory Ser. A} \textbf{13} (1972), 220-245.


\bibitem{wilson2}  R.Wilson,  An existence theory for pairwise balanced designs II. 
The structure of PBD-closed sets and the existence conjectures,
\emph{J. Combinatorial Theory Ser. A} \textbf{13} (1972), 246-273.


\bibitem{wilson3}  R.Wilson,  An existence theory for pairwise balanced designs III.
Proof of the existence conjectures, \emph{J. Combinatorial Theory Ser. A} \textbf{13} (1975), 71-79.
Decompositions of complete graphs into subgraphs isomorphic to a given graph,
\emph{Proc. British Combinatorial Conference}, 1975, 647--659.
\end{thebibliography}
\end{document}